\newtheorem{theorem}{Theorem}[section]
\newtheorem{lemma}{Lemma}[section]
\newtheorem{proposition}{Proposition}[section]
\theoremstyle{definition}
\theoremstyle{remark}
\numberwithin{equation}{section}
\allowdisplaybreaks \setlength{\textwidth}{15cm}
\begin{document}
\author{Chenlu Zhang}
\address{College of Mathematics and Statistics, Chongqing University,
                             Chongqing, 401331,  China.}
\email{20220601002@stu.cqu.edu.cn}
\author{Huaqiao Wang}
\address{College of Mathematics and Statistics, Chongqing University,
                             Chongqing, 401331,  China.}
\email{wanghuaqiao@cqu.edu.cn}

\title[Strong solutions of the LLS equation]
{Global existence of strong solutions to the Landau--Lifshitz--Slonczewski equation}
\thanks{Corresponding author: wanghuaqiao@cqu.edu.cn}
\keywords{Landau--Lifshitz--Slonczewski equation, Strong solutions, Existence and uniqueness, Besov space, energy estimates.}\
\subjclass[2010]{82D40; 35K10; 35D35.}
\begin{abstract}
In this paper, we focus on the existence of strong solutions for the Cauchy problem of the three-dimensional Landau-Lifshitz-Slonczewski
equation. We construct a new combination of Bourgain space and Lebesgue space where linear and nonlinear estimates can be closed by applying
frequency decomposition and energy methods. Finally, we establish the existence and uniqueness of the global strong solution provided that
the initial data belongs to Besov space $\dot{B}^{\frac{n}{2}}_{\Omega}$.
\end{abstract}

\maketitle
\section{Introduction}\label{Sec1}
The discovery of the spin-transfer torque (STT for short) effect is a milestone in the theory of micromagnetics and it breaks the traditional way of manipulating magnetic torque by magnetic fields. In the 1970s, Berger \cite{BerL,FreBerL,Hung} discovered that electric currents can drive the motion of magnetic domain walls. In the late 1980s, Slonczewski \cite{SlonJC} showed the existence of interlayer exchange coupling between the two ferromagnetic electrodes of a magnetic tunnel junction. Thereafter, the existence of STT effect was be confirmed, for details see \cite{Kub,San,Tso}. There are many types of STT, including Slonczewski STT, vertical STT, adiabatic and non-adiabatic STT. In the following, we focus on the third type of STT.

When the current flows along the film surface in any direction, the adiabatic and non-adiabatic STT effect are expressed by (see \cite{Smi}):
\begin{align*}
\boldsymbol{T}_{ad}&=\theta_{3}\boldsymbol{m}\times\left[\boldsymbol{m}\times(v\cdot\nabla\boldsymbol{m})\right],\\
\boldsymbol{T}_{na}&=\theta_{4}\boldsymbol{m}\times(v\cdot\nabla\boldsymbol{m}),
\end{align*}
where $v$ is the density of the spin-polarized current in the current direction, $\theta_{3}$ and $\theta_{4}$ are
dimensionless constants. $\boldsymbol{T}_{ad}$ denotes the adiabatic STT, which was first introduced by Bazaliy in \cite{Ral}.
$\boldsymbol{T}_{na}$ denotes the non-adiabatic STT, which was proposed by Zhang in \cite{Smi}.

If we only consider the effect of magnetic field, the dynamic behavior of magnetization can be described by the Landau-Lifshitz-Gilbert (LLG for short) equation:
\begin{align}
\frac{\partial{\boldsymbol{m}}}{\partial{t}}=\theta_{1}\boldsymbol{m}\times\boldsymbol{H}_{\rm eff}+\theta_{2}
\left(\boldsymbol{m}\times\frac{\partial{\boldsymbol{m}}}{\partial{t}}\right),\;\; \boldsymbol{m}(0,x)=\boldsymbol{m}_0.\label{1.8}
\end{align}
where $\boldsymbol{m}(t,x):\mathbb{R}\times\mathbb{R}^{n}\rightarrow\mathbb{S}^{2}\subset \mathbb{R}^{3}$ is the magnetic intensity, $\boldsymbol{H}_{\rm eff}$ is the effective field, $\theta_{1}$ is a constant related to the magnetogyric ratio, $\theta_{2}$ is the Gilbert damping parameter. The first term on the right-hand side of \eqref{1.8} represents Larmor precession and the second term is Gilbert damping term.

For the magnetic nanowires \cite{Hei,Mye} with in-plane current flow, the current can induce STT effect. In general, physicists \cite{Han} directly add the STT term to the right-hand side of \eqref{1.8}, then the LLG equation with STT effect is expressed by:
\begin{align}
\frac{{\partial} \boldsymbol{m}}{\partial t}&=\theta_{1}(\boldsymbol{m}\times\boldsymbol{H}_{\rm eff})
+\theta_{2}\left(\boldsymbol{m}\times\frac{{\partial}\boldsymbol{m}}{\partial t}\right)+\boldsymbol{T}_{ad}+\boldsymbol{T}_{na},\;\; \boldsymbol{m}(0,x)=\boldsymbol{m}_0. \label{1.12}
\end{align}

In this paper, we only consider the exchange field, i.e., $\boldsymbol{H}_{\rm eff}=\Delta\boldsymbol{m}$. By using the fact that $|\boldsymbol{m}|=1$, adding $\theta_{2}\boldsymbol{m}\times \eqref{1.12}$ into \eqref{1.12}, and omitting some coefficients for simplicity, we obtain the Landau-Lifshitz-Slonczewski (LLS for short) equation:
\begin{align}\label{main-eq}
\begin{cases}
{\partial_t}\boldsymbol{m}+(v\cdot\nabla)\boldsymbol{m}
+\boldsymbol{m}\times(v\cdot\nabla)\boldsymbol{m}
=\boldsymbol{m}\times\Delta\boldsymbol{m}
-\varepsilon(\boldsymbol{m}\times\boldsymbol{m}\times\Delta\boldsymbol{m}),\\
\boldsymbol{m}(0,x)=\boldsymbol{m}_{0},
\end{cases}
\end{align}
where $\varepsilon\in(0,1)$ is the Gilbert damping coefficient.

If we discard the STT effect, the LLS equation becomes the LLG equation. For the case of dimension $n=1$, Zhou-Guo \cite{ZhouG}
obtained the global existence of weak solutions for the LLG equation using the Leray-Schauder fixed point theorem. Later, Zhou-Guo-Tan \cite{ZhouGT} established the global existence and uniqueness of smooth solutions to the Cauchy problem with the periodic boundary by the difference method and the energy method. Afterwards, for the nonhomogeneous LLG equation, Ding-Guo-Su \cite{DingGS} considered the existence and uniqueness of local smooth solutions to the Cauchy problem with periodic boundary. In dimension $n=2$, Harpe \cite{Har} obtained the regularity of weak solutions to the initial-boundary value problem in a bounded open domain by using the Ginzburg-Landau approximation method. Furthermore, if the first-order derivative of the initial data is sufficiently small, Carbou-Fabrie \cite{Car} obtained the global existence of regular solutions of the initial-boundary value problem. When weak solutions have finite energy, Chen-Ding-Guo \cite{Chen} showed that the weak solution is regular and unique except for at most finite points in a compact two-dimensional manifold without boundary.

In higher dimensions, there have been many results for the LLG equation. Guo-Hong \cite{GuoHo} established the connection between the LLG equation and the harmonic map heat flow equation, and then they showed the existence of global weak solutions for the Cauchy problem by using penalized approximation method and also obtained the weak solution is regular except for at most finite points. Moreover, they considered the global smoothness of solutions when the regularity of the initial data is well enough or the initial energy is sufficiently small. Ding-Guo \cite{Ding} built up a special energy inequality and monotonicity inequality of weak solutions, and then showed that the weak solution is regular in a bounded compact Riemannian manifold. If the initial energy is sufficiently small, Ding-Wang \cite{DingW} proved that short-time smooth solutions must blow-up in finite time by using approximation method in an unbounded Riemannian manifold. In recent years, many researchers have noticed that the relationship between the LLG equation and the Ginzburg-Landau equation. Melcher \cite{Mel} transformed the LLG equation into the complex Ginzburg-Landau equation by moving frames and established the global existence and uniqueness of smooth solutions to the Cauchy problem in Sobolev space. Later, Lin-Lai-Wang \cite{Lin} also proved the global solvability of the Cauchy Problem in Morrey space. Recently, Guo-Huang \cite{GuoHu} investigated the existence and uniqueness of strong solutions in the critical Besov space by using the stereographic projection and frequency decomposition.

Obviously it is known that the well-posedness of the LLG equation have been established well, however there are a few results about the well-posedness of the LLS equation. In Sobolev space, Melcher-Ptashnyk \cite{MelP} studied the existence and uniqueness of global weak solutions and also considered smooth solutions when the initial data is small and regular sufficiently for the three-dimensional LLS equation. The regularity of the solution in Sobolev space seems very challenging for lack of regularity of the initial data, which also reflects the difficulties we encounter always in Sobobev space. Our goal is to investigate strong solutions  of the LLS equation with lower regularity of the initial data in other suitable space. The Bourgain space was first used to systematically study the low regularity theory of the Schr\"{o}dinger equation and the KdV equation by Bourgain \cite{Bour1,Bour2}. Later, the method has been widely applied to the KdV equation, nonlinear wave equations and the Schr\"{o}dinger equation, etc., for example see \cite{Ken,Kla,Tao}. The advantages of the Bourgain space are that we can lower the regularity of the initial data and use the Littlewood-Paley decomposition to transform the differential operator into scalar multiplication in frequency space, namely we use the frequency decomposition to balance the coefficients instead of reducing the regularity of space. By Duhamel's principle, the operator \;$e^{it\Delta}$\; appears in the expression of solutions and we cannot find a suitable estimate to deal with it in Bourgain space only. So we look for the semigroup estimates of the operator in the anisotropic Lebesgue space. Inspired by Guo-Huang \cite{GuoHu} who proved the existence of global strong solutions of the LLG equation in the critical Besov space, we apply the frequency decomposition to address the nonlinear terms of the LLS equation. Unlike the main nonlinear part of LLG equation is only $u(\nabla u)^{2}$, the major nonlinear parts of the LLS equation are $u(\nabla u)^{2}$ and $u^{2}\nabla u$. Since the sum of partial derivatives of the two terms are not equal, if we choose the same form of frequency decomposition, either we obtain different solution spaces or the coefficients in front of the same solution space are different. Both of these will construct a restricted and smaller solution space. Therefore, on the basis of the solution space previously obtained by the term $u(\nabla u)^{2}$, we exploit a new frequency decomposition to deal with the term $u^{2}\nabla u$ and combine a suitable solution space to close the energy of linear and nonlinear parts.

First, we transform the LLS equation \eqref{main-eq} into the complex derivative Ginzburg-Landau type equation by using the stereographic projection transform. Let
\begin{align*}
u=\frac{\boldsymbol{m}_{1}+i\boldsymbol{m}_{2}}{1+\boldsymbol{m}_{3}},
\end{align*}
where $\boldsymbol{m}=(\boldsymbol{m}_{1},\boldsymbol{m}_{2},\boldsymbol{m}_{3})$ is the solution of equation \eqref{main-eq}.
Obviously, the inverse of this projection is
\begin{align}\label{Ucoord}
(\boldsymbol{m}_{1},\boldsymbol{m}_{2},\boldsymbol{m}_{3})
=(\frac{u+\bar{u}}{1+|u|^{2}},\frac{-i(u-\bar{u})}{1+|u|^{2}},\frac{1-|u|^{2}}{1+|u|^{2}}),
\end{align}
Substituting \eqref{Ucoord} into \eqref{main-eq}, we obtain three equations respectively:
\begin{align}
&(1-\bar{u}^{2})A(u,\bar{u})+F(u,\bar{u})=(1-u^{2})\bar{A}(u,\bar{u})+\bar{F}(u,\bar{u}),\label{m1eq}\\
&(1+\bar{u}^{2})A(u,\bar{u})+H(u,\bar{u})=-(1+u^{2})\bar{A}(u,\bar{u})-\bar{H}(u,\bar{u}),\label{m2eq}\\
&\bar{u}A(u,\bar{u})=u\bar{A}(u,\bar{u}),\label{m3eq}
\end{align}
where
\begin{align*}
A(u,\bar{u})=iu_{t}-\left[(-1+\varepsilon i)\Delta u-(-1+i)(v\cdot\nabla)u-\frac{2(-1+\varepsilon i)
\bar{u}(\nabla u)^{2}}{1+|u|^{2}}\right],
\end{align*}
\begin{align*}
F(u,\bar{u})=\frac{2(1+\bar{u}^{2})(1-|u|^2)(v\cdot\nabla)\bar{u}}{1+|u|^{2}},\quad
H(u,\bar{u})=\frac{4(v\cdot\nabla)u(\bar{u}^{2}+|u|^{2})}{1+|u|^{2}}.
\end{align*}
Combining \eqref{m1eq} and \eqref{m2eq}, we have
\begin{align*}
%2A(u,\bar{u})+F(u,\bar{u})+H(u,\bar{u})=[(1-u^{2})-(1+u^{2})]\bar{A}(u,\bar{u})+\bar{F}(u,\bar{u})-\bar{H}(u,\bar{u}),\\
%&\Longrightarrow
A(u,\bar{u})=-u^{2}\bar{A}(u,\bar{u})+\frac{\bar{F}(u,\bar{u})-F(u,\bar{u})}{2}-\frac{H(u,\bar{u})+\bar{H}(u,\bar{u})}{2}.
\end{align*}
By \eqref{m3eq}, one has
\begin{align*}
A(u,\bar{u})=\frac{\left[\bar{F}(u,\bar{u}-F(u,\bar{u})\right]-\left[H(u,\bar{u})+\bar{H}(u,\bar{u})\right]}{2(1+|u|^{2})}
=\frac{{-i\rm{Im}}F-{\rm{Re}}H}{1+|u|^{2}},
\end{align*}
then $u$ solves the following complex derivative Ginzburg-Landau equation:
\begin{align*}
\begin{cases}
\partial_{t}u=-(\varepsilon+i)\Delta u+(1+i)(v\cdot\nabla)u+\frac{2(\varepsilon+i)\bar{u}(\nabla u)^{2}}{1+|u|^{2}}
+\frac{{\rm{Im}}F-i{\rm{Re}}H}{1+|u|^{2}},\\
u(0,x)=u_{0}.
\end{cases}
\end{align*}
Setting $\tilde{t}=-t$, we obtain
\begin{align}\label{deri-Ginz}
\begin{cases}
\partial_{t}u=(\varepsilon+i)\Delta u-(1+i)(v\cdot\nabla)u-\frac{2(\varepsilon+i)\bar{u}(\nabla u)^{2}}{1+|u|^{2}}
-\frac{{\rm{Im}}F-i{\rm{Re}}H}{1+|u|^{2}},\\
u(0,x)=u_{0}.
\end{cases}
\end{align}

Now, we state our main result in the following.
\begin{theorem}\label{main-th}
Assume that $n \geq 3$, $\|v\|_{L^{\infty}_{t,x}}\leq \eta$ and $\eta>0$ is small enough. When the initial data $\boldsymbol{m}_{0}=(\boldsymbol{m}_{01},\boldsymbol{m}_{02},\boldsymbol{m}_{03}) \in \dot{B}^{\frac{n}{2}}_{\Omega}(\mathbb{R}^{n}; \mathbb{S}^{2})$ satisfies $\left\|\frac{\boldsymbol{m}_{01}+i\boldsymbol{m}_{02}}{1+\boldsymbol{m}_{03}}\right\|_{F^{\frac{n}{2}}\cap Z^{\frac{n}{2}}}\leq \eta$, then the LLS equation \eqref{main-eq} has a unique strong solution $\boldsymbol{m}$ in the combination of Bourgain space and Lebesgue space, i.e., $F^{\frac{n}{2}}\cap Z^{\frac{n}{2}}$.
\end{theorem}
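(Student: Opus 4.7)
The plan is to work with the derivative Ginzburg--Landau formulation \eqref{deri-Ginz} rather than \eqref{main-eq} directly, since the inverse stereographic projection \eqref{Ucoord} transports a strong solution $u$ back to a strong solution $\boldsymbol{m}$ (and preserves uniqueness as long as $u$ stays small in $L^{\infty}$, so that $1+\boldsymbol{m}_{3}$ is bounded away from zero). I would rewrite \eqref{deri-Ginz} in Duhamel form,
\begin{align*}
u(t)=e^{(\varepsilon+i)t\Delta}u_{0}+\int_{0}^{t}e^{(\varepsilon+i)(t-s)\Delta}\mathcal{N}(u)(s)\,ds,
\end{align*}
where $\mathcal{N}(u)$ gathers the transport term $-(1+i)(v\cdot\nabla)u$, the principal quadratic nonlinearity $-2(\varepsilon+i)\bar u(\nabla u)^{2}/(1+|u|^{2})$, and the lower-order $F,H$ contributions, and then run a Picard iteration in a small ball of $F^{\frac{n}{2}}\cap Z^{\frac{n}{2}}$ whose radius is governed by $\eta$.

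The first building block is the linear estimate
\begin{align*}
\|e^{(\varepsilon+i)t\Delta}u_{0}\|_{F^{\frac{n}{2}}\cap Z^{\frac{n}{2}}}\lesssim \|u_{0}\|_{\dot{B}^{\frac{n}{2}}_{\Omega}},
\end{align*}
which I would establish via Littlewood--Paley decomposition. On the Bourgain piece $F^{\frac{n}{2}}$ this reduces to dyadic $L^{2}$ computations in the $(\tau,\xi)$-variables using that the symbol of $\partial_{t}-(\varepsilon+i)\Delta$ is $i\tau+(\varepsilon+i)|\xi|^{2}$, while on the Lebesgue piece $Z^{\frac{n}{2}}$ it uses the dissipative factor $e^{-\varepsilon t|\xi|^{2}}$ to extract the needed time integrability. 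The corresponding bound on the Duhamel integral follows from the same linear estimate together with a Minkowski-type argument. In the small-data regime $1/(1+|u|^{2})$ can be expanded as a geometric series in $|u|^{2}$, so the nonlinearity becomes a convergent sum of polynomial monomials in $u,\bar u,\nabla u$, and it suffices to bound each such monomial in the solution space.

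The heart of the argument is the multilinear estimate in $F^{\frac{n}{2}}\cap Z^{\frac{n}{2}}$. For the principal $\bar u(\nabla u)^{2}$ term, Littlewood--Paley paraproduct analysis balances the two gradients against the highest frequency and closes as in the LLG treatment of \cite{GuoHu}. For the terms hidden inside $F$ and $H$, however, the effective shape is $u^{2}\nabla u$ with total derivative count $+1$ instead of $+2$; a single dyadic decomposition tuned to $u(\nabla u)^{2}$ therefore produces either a mismatched weight in the $Z$-norm or forces the admissible solution space to shrink unnecessarily. As anticipated in the introduction, I would introduce a second frequency splitting adapted to the single derivative, arranging the paraproducts so that both $u(\nabla u)^{2}$ and $u^{2}\nabla u$ are controlled by the same combined norm, while the $L^{\infty}$-smallness of $v$ absorbs the transport coefficient in the $F,H$ contributions and in the linear term $(v\cdot\nabla)u$. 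This reconciliation of two distinct frequency balances inside one solution space is the main technical obstacle.

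Once the linear and multilinear estimates are in place, a standard contraction argument produces a unique fixed point $u\in F^{\frac{n}{2}}\cap Z^{\frac{n}{2}}$, and the inverse projection \eqref{Ucoord} returns the global strong solution $\boldsymbol{m}$ of \eqref{main-eq}. Uniqueness in the target class follows because $u\mapsto\boldsymbol{m}$ is bi-Lipschitz on the small ball where the iteration lives, so the contraction property on the $u$-side transfers directly back to $\boldsymbol{m}$.
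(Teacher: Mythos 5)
Your proposal follows essentially the same route as the paper: stereographic projection to the derivative Ginzburg--Landau equation, $\varepsilon$-uniform linear estimates in the mixed Bourgain/Lebesgue space $F^{\frac{n}{2}}\cap Z^{\frac{n}{2}}$, frequency-localized multilinear estimates treating the two nonlinear shapes $u(\nabla u)^{2}$ and $u^{2}\nabla u$ with separate dyadic decompositions (with smallness of $v$ and $u_{0}$ closing the contraction), and transfer back to $\boldsymbol{m}$ via the inverse projection. This matches the paper's Propositions \ref{pro linear-es} and \ref{pro nonlinear-es} and Theorem \ref{th exist}, so the plan is correct and essentially identical in structure.
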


Observing that the connection between equations \eqref{main-eq} and \eqref{deri-Ginz}, we prove the well-posedness of equation \eqref{deri-Ginz} instead of equation \eqref{main-eq}. Our main strategy is to use the fixed point theorem; it is important to close the linear and nonlinear estimates. According to the structure of the solution, we construct a new solution space $F^{\frac{n}{2}}\cap Z^{\frac{n}{2}}$ for balancing the coefficients from the linear and nonlinear parts. More precisely, we first close the linear estimates in the space $F^{\frac{n}{2}}\cap Z^{\frac{n}{2}}$ (see Proposition \ref{pro linear-es}) by  Lemma \ref{Leb-linear}--Lemma \ref{X-linear}. Later, we prove that the nonlinear estimates in the space $F^{\frac{n}{2}}\cap Z^{\frac{n}{2}}$ is close (see Proposition \ref{pro nonlinear-es}) by using Lemma \ref{Lemma4.3}--Lemma \ref{Lemma4.2}. Combining the contractive result \eqref{contractive map} with Propositions \ref{pro linear-es} and \ref{pro nonlinear-es} which yield that the solution map is self-to-self, we obtain the well-posedness of equation \eqref{deri-Ginz}.

This paper is arranged as follows. In Section \ref{Sec2}, we introduce our main dyadic function spaces and recall some basic definitions.
In Section \ref{Sec3}, we list some useful lemmas and establish the linear estimate in Proposition \ref{pro linear-es}. In Section \ref{Sec4}, we obtain the nonlinear estimates in Proposition \ref{pro nonlinear-es}. In Section \ref{Sec5}, we devote to giving the proof of our main results (see Theorem \ref{main-th}).

\section{Definition and notation}\label{Sec2}
In this section, we recall some definitions and useful notations. For $\Omega \in \mathbb{S}^{2}$, the space $\dot{B}^{\frac{n}{2}}_{\Omega}$ is defined by
\begin{align*}
\dot{B}^{\frac{n}{2}}_{\Omega}:=\dot{B}^{\frac{n}{2}}_{\Omega}(\mathbb{R}^{n};\mathbb{S}^{2})=\{f:\mathbb{R}^{n}\rightarrow\mathbb{R}^{3};~
f-\Omega\in \dot{B}^{\frac{n}{2}}_{2,1},~|f(x)|=1~a.e.\text{ in }\mathbb{R}^{3}\},
\end{align*}
where the space $\dot{B}^{\frac{n}{2}}_{2,1}$ is the standard Besov space.

Let $\eta(|\xi|):\mathbb{R}\rightarrow[0,1]$ be a non-negative, smooth and radially decreasing function, where $0<\eta(|\xi|)\leq1$ compactly supported in $|\xi|\leq\frac{8}{5}$ and $\eta\equiv1$ when $|\xi|\leq\frac{5}{4}$.
Let $\chi_{k}(\xi)=\eta(\frac{|\xi|}{2^{k}})-\eta(\frac{|\xi|}{2^{k-1}})$,
$\chi_{\leq k}(\xi)=\eta(\frac{|\xi|}{2^{k}}),\tilde{\chi}_{k}(\xi)=\sum^{9n}_{l=-9n}\chi_{k+l}({\xi})$, we define the homogeneous and inhomogeneous Littlewood-Paley projector $P_{k}$ and $P_{\leq  k}$ on $L^{2}(\mathbb{R}^{n})$ respectively by
\begin{align*}
\widehat{P_{k}u}(\xi)=\chi_{k}(\xi)\widehat{u}(\xi),\quad \widehat{P_{\leq k}u}(\xi)=\chi_{\leq k}(\xi)\widehat{u}(\xi),
\end{align*}
for any $k\in\mathbb{Z}$, and $P_{\geq k}=I-P_{\leq k-1},\;P_{[k_{1},k_{2}]}=\sum_{j=k_{1}}^{k2}{P_{j}}$, where $I$ is the unit projection. Similarly, define $\widehat{\widetilde{P_{k}}u}=\widetilde{\chi_{k}}(\xi)\widehat{u}(\xi)$.

We define the modulation projector $Q_{k},Q_{\leq k}$ on $L^{2}(\mathbb{R}\times\mathbb{R}^{n})$ by
\begin{align*}
\widehat{Q_{k}u}(\xi,\tau)=\chi_{k}(\tau+|\xi|^{2})\widehat{u}(\xi,\tau),\quad
\widehat{Q_{\leq k}u}(\xi,\tau)=\chi_{\leq k}(\tau+|\xi|^{2})\widehat{u}(\xi,\tau),
\end{align*}
for any $k\in\mathbb{Z}$, where $Q_{\geq k}=I-Q_{\leq k-1},\;Q_{[k_{1},k_{2}]}=\sum_{j=k_{1}}^{k_{2}}{Q_{j}}$.

We define the anisotropic Lebesgue space $L^{p,q}_{e}(\mathbb{R}\times\mathbb{R}^{n}),~1\leq p,q<\infty$ by
\begin{align*}
\|f\|_{L^{p,q}_{\mathbf{e}}(\mathbb{R}^{n+1})}=\left(\int_{\mathbb{R}}\left(\int_{H_{\mathbf{e}}
\times\mathbb{R}}|f(\lambda \mathbf{e}+y,t)|^{q}dydt \right)^{\frac{p}{q}}d\lambda \right)^{\frac{1}{p}}.
\end{align*}
We decompose $\mathbb{R}^{n}=\lambda \mathbf{e}\oplus H_{\mathbf{e}}$, where $\mathbf{e}\in{\mathbb{S}^{n-1}}$ and $\mathbb{S}^{n-1}$
is the unit sphere on $\mathbb{R}^{n}$, $H_{\mathbf{e}}$ is the hyperplane with normal vector $\mathbf{e}$.
Write $L^{p,q}_{\mathbf{e}_{j}}=L^{p}_{x_{j}}L^{q}_{\bar{x_{j}},t}$, where $x=x_{i}\oplus\bar{x}_{i}$. The symbol $A\lesssim B$ means that $A\leq CB$ where $C$ is a positive  constant.

The Bourgain space $X^{s,b}$ has been used to the low regularity theory for the Cauchy problem of the nonlinear dispersive equation. In this paper, by means of the modulation-homogeneous version as in \cite{Beje,Guo}. We define $X^{0,b,q}$ by
\begin{align*}
\|f\|_{X^{0,b,q}}=\left(\sum_{k\in\mathbb{Z}}2^{kbq}\|Q_{k}f\|_{L^{2}_{t,x}}^{q}\right)^{1/q}.
\end{align*}

If$\;u(x,t)\in L^{2}(\mathbb{R}^{+}\times\mathbb{R}^{n})$ has spatial frequency in ${|\xi|\sim2^{k}}$, we define the main dyadic function space by
\begin{align*}
\|u\|_{F_{k}}=&\|u\|_{X^{0,\frac{1}{2},1}_{+}}+\|u\|_{L^{\infty}_{t}L^{2}_{x}}+\|u\|_{L^{2}_{t}L^{\frac{2n}{n-2}}_{x}}\\
&+2^{-(n-1)k/2}\sup_{\mathbf{e}_{i}\in\mathbb{S}^{n-1}}\|u\|_{L^{2,\infty}_{\mathbf{e}_{i}}}
+2^{\frac{k}{2}}\sup_{|j-k|\leq20}\sup_{\mathbf{e}_{i}\in\mathbb{S}^{n-1}}\|P_{j,\mathbf{e}_{i}}u\|_{L^{\infty,2}_{\mathbf{e}_{i}}},\\
\|u\|_{Y_{k}}=&\|u\|_{L^{\infty}_{t}L^{2}_{x}}+\|u\|_{L^{2}_{t}L^{\frac{2n}{n-2}}_{x}}
+2^{-(n-1)k/2}\sup_{\mathbf{e}_{i}\in\mathbb{S}^{n-1}}\|u\|_{L^{2,\infty}_{\mathbf{e}_{i}}}\\
&+2^{-k}\inf_{u=u_{1}+u_{2}}(\|u_{1}\|_{X^{0,1}}+\|u_{2}\|_{X^{0,1}}),\\
\|u\|_{Z_{k}}=&2^{-k}\|u\|_{X^{0,1}},\\
\|u\|_{N_{k}}=&\inf_{u=u_{1}+u_{2}+u_{3}}\left(\|u_{1}\|_{L^{1}_{t}L^{2}_{x}}
+2^{-k/2}\sup_{\mathbf{e}_{i}\in\mathbb{S}^{n-1}}\|u_{2}\|_{L^{1,2}_{\mathbf{e}_{i}}}
+\|u_{2}\|_{X^{0,-\frac{1}{2},1}}\right)+2^{-k}\|u\|_{L^{2}_{t,x}}.
\end{align*}
Obviously, $F_{k}\cap Z_{k}\subset Y_{k}$, then we define some spaces with the following norms:
\begin{align*}
\|u\|_{F^{s}}&=\sum_{k\in{\mathbb{Z}}}2^{ks}\|P_{k}u\|_{F_{k}},\quad \|u\|_{Y^{s}}=\sum_{k\in{\mathbb{Z}}}2^{ks}\|P_{k}u\|_{Y_{k}},\\
\|u\|_{Z^{s}}&=\sum_{k\in{\mathbb{Z}}}2^{ks}\|P_{k}u\|_{Z_{k}},\quad \|u\|_{N^{s}}=\sum_{k\in{\mathbb{Z}}}2^{ks}\|P_{k}u\|_{N_{k}}.
\end{align*}

\section{The linear estimate}\label{Sec3}
In order to prove that the solution map is closed, we need to discuss the property of solution spaces. In this section, we estimate the linear parts of equation \eqref{deri-Ginz}, and obtain the uniform estimates with respect to $\varepsilon$. Inspired by \cite{IonK,IonK1,Keel}, we can resort to some conclusions similar to Strichartz estimates from the Schr\"{o}dinger equation:
\begin{lemma}\label{semi-stri}%工具
Assume that $n\geq3$, for any $k\in\mathbb{Z}$, we have
\begin{align*}
&\|e^{it\Delta}P_{k}f\|_{L^{2}_{t}L^{\frac{2n}{n-2}}_{x}\cap L^{\infty}_{t}L^{2}_{x}}
\leq C\|P_{k}f\|_{L^{2}_{x}}, \\%这里有没有系数\alpha 都可以，原因是在Strichartz估计中只要满足半群定义就可以.
&\sup_{\mathbf{e}_{j}\in\mathbb{S}^{n-1}}\|e^{i t\Delta}P_{k}f\|_{L^{2,\infty}_{\mathbf{e}_{j}}}
\leq C2^{\frac{(n-1)k}{2}}\|P_{k}f\|_{L^{2}_{t,x}},\\
&\sup_{\mathbf{e}_{j}\in\mathbb{S}^{n-1}}\|e^{i t\Delta}P_{k,\mathbf{e}_{j}}f\|_{L^{\infty,2}_{\mathbf{e}_{j}}}
\leq C2^{-\frac{k}{2}}\|P_{k}f\|_{L^{2}_{t,x}}.
\end{align*}
where the constant $C>0$ is independent of $k$.
\end{lemma}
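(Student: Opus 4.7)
The plan is to prove the three displayed inequalities separately, each reducing to a standard property of the Schr\"odinger semigroup $e^{it\Delta}$ combined with the Littlewood--Paley projector $P_k$. (I read the data $f$ as a time-independent initial datum, in which case the $L^2_{t,x}$ on the right-hand sides of the second and third displays should be understood as $L^2_x$; this matches the usage in the cited references.) The $L^{\infty}_t L^2_x$ part of the first display is immediate from the unitarity of $e^{it\Delta}$ on $L^2_x$, and the $L^2_t L^{2n/(n-2)}_x$ part is the Strichartz estimate at the admissible pair $(2, 2n/(n-2))$, which is admissible precisely for $n\geq 3$. I would derive it from the dispersive bound $\|e^{it\Delta}\|_{L^1_x\to L^{\infty}_x}\leq C|t|^{-n/2}$ via a $TT^{*}$ argument and Hardy--Littlewood--Sobolev, following Keel--Tao; the projector $P_k$ plays no essential role since the estimate is scale-invariant.

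For the second display, after an orthogonal change of variable I may take $\mathbf{e}_j=\mathbf{e}_1$, so the target norm becomes $\|e^{it\Delta}P_k f\|_{L^2_{x_1}L^{\infty}_{\bar{x}_1,t}}$. Dualizing by $TT^{*}$, the estimate reduces to a decay bound for the Schr\"odinger kernel $\int e^{i(t-s)|\xi|^2+i(x-y)\cdot\xi}\chi_k(\xi)^2\,d\xi$ tested against $L^2_{x_1}L^1_{\bar{x}_1,t}$ data; stationary phase on the $n$ frequency variables together with the localization $|\xi|\sim 2^k$ produces the factor $2^{(n-1)k/2}$ dictated by scaling. For the third display, the angular projector $P_{k,\mathbf{e}_j}$ pins the frequency to a slab where $|\xi\cdot\mathbf{e}_j|\sim 2^k$; after rotating to $\mathbf{e}_1$, the phase $|\xi|^2$ is non-degenerate in $\xi_1$ on this slab, and a Fubini reduction to a one-dimensional $TT^{*}$ argument yields the half-derivative gain $2^{-k/2}$ characteristic of Kato's local smoothing effect.

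The main obstacle will be the careful bookkeeping of the anisotropic $L^{p,q}_{\mathbf{e}}$ norms under directional frequency projections, and in particular verifying that $P_{k,\mathbf{e}_j}$ localizes frequency cleanly enough for the one-dimensional stationary-phase computation to apply without loss. No essentially new idea is needed, however: bounds of precisely these forms appear in the restriction/Strichartz framework of Keel--Tao, Ionescu--Kenig, and Kenig cited just before the statement, and the argument should be a direct adaptation of those established templates.
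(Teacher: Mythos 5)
Your proposal is correct and takes essentially the same route as the paper, which in fact states Lemma~\ref{semi-stri} without proof, simply invoking the Keel--Tao and Ionescu--Kenig references for exactly these frequency-localized Strichartz, maximal-function, and local-smoothing estimates (and your reading of the right-hand sides as $\|P_k f\|_{L^2_x}$ for time-independent data is the intended one). The only caveat worth noting is that $(2,\tfrac{2n}{n-2})$ is the endpoint admissible pair, where the $TT^{*}$ plus Hardy--Littlewood--Sobolev scheme fails at $q=2$, so one must use the full Keel--Tao endpoint argument rather than HLS --- which the citation you already rely on provides.
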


Notice that the space $F_{k}\cap Z_{k}$ consists of three main spaces: the general Lebesgue space $L_{t}^{p}L_{x}^{q}$, the anisotropic Lebesgue space $L_{\mathbf{e}}^{p,q}$ and $X^{0,b,q}$. We introduce a present lemma for connecting the norm of $X^{0,b,q}$ with other norms which is an extension of \cite[Proposition 5.4]{Wang}.

\begin{lemma}\label{semi-Banach}
Assume that $\mathbb{X}$ is an arbitrary space-time Banach space, if for any $f_{0}\in L^{2}_{x}$, $\tau_{0}\in \mathbb{R}$, one has
\begin{align*}
\|e^{it\tau_{0}}S(t)f_{0}\|_{\mathbb{X}}\leq C(k)\|f_{0}\|_{L^{2}_{x}},
\end{align*}
then for any $k\in \mathbb{Z}$, $f\in L^{2}_{t,x}$,
\begin{align*}
\|P_{k}f\|_{\mathbb{X}}\leq C(k)\|P_{k}f\|_{X^{0,\frac{1}{2},1}},
\end{align*}
where $C(k)$ is the polynomial with respect to $k$.
\end{lemma}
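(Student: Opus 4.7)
The plan is to prove the transfer principle by expressing any modulation-localized function as a $\sigma$-superposition of phase-modulated free evolutions $e^{it\sigma}S(t)h_\sigma$ and then applying the hypothesis pointwise in $\sigma$. First I would reduce matters to a single modulation shell: since the $X^{0,\frac12,1}$-norm unfolds as
\begin{align*}
\|P_{k}f\|_{X^{0,\frac{1}{2},1}} = \sum_{j\in\mathbb{Z}} 2^{j/2}\|Q_{j}P_{k}f\|_{L^{2}_{t,x}},
\end{align*}
and the $\mathbb{X}$-norm is subadditive under the decomposition $P_{k}f=\sum_{j}Q_{j}P_{k}f$, it suffices to prove the single-shell bound $\|Q_{j}P_{k}f\|_{\mathbb{X}}\le C(k)\,2^{j/2}\|Q_{j}P_{k}f\|_{L^{2}_{t,x}}$ for every $j\in\mathbb{Z}$, and then sum.

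The heart of the argument is the representation formula. For fixed $j$, I would apply Fourier inversion and perform the change of variable $\sigma=\tau+|\xi|^{2}$ on the time-frequency side, which rewrites
\begin{align*}
Q_{j}P_{k}f(x,t) \;=\; \int_{|\sigma|\sim 2^{j}} e^{it\sigma}\bigl(S(t)h_{\sigma}\bigr)(x)\,d\sigma,
\end{align*}
where $h_{\sigma}\in L^{2}_{x}$ is defined by $\widehat{h_{\sigma}}(\xi)=\chi_{j}(\sigma)\chi_{k}(|\xi|)\hat{f}(\xi,\sigma-|\xi|^{2})$. Minkowski's inequality in $\sigma$ together with the hypothesis applied pointwise then yields
\begin{align*}
\|Q_{j}P_{k}f\|_{\mathbb{X}} \;\le\; \int_{|\sigma|\sim 2^{j}} \bigl\|e^{it\sigma}S(t)h_{\sigma}\bigr\|_{\mathbb{X}}\,d\sigma \;\le\; C(k)\int_{|\sigma|\sim 2^{j}}\|h_{\sigma}\|_{L^{2}_{x}}\,d\sigma.
\end{align*}
A Cauchy--Schwarz on the $\sigma$-support, which has measure $\lesssim 2^{j}$, bounds the right-hand side by $C(k)\,2^{j/2}\bigl(\int\|h_{\sigma}\|_{L^{2}_{x}}^{2}\,d\sigma\bigr)^{1/2}$, and Plancherel (undoing the change of variable) identifies this $L^{2}$-in-$(\xi,\sigma)$ norm with $\|Q_{j}P_{k}f\|_{L^{2}_{t,x}}$. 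Summing over $j$ closes the estimate.

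The main obstacle is making the representation formula rigorous for an \emph{arbitrary} space-time Banach space $\mathbb{X}$: one must verify that $\sigma\mapsto e^{it\sigma}S(t)h_{\sigma}$ is strongly measurable as an $\mathbb{X}$-valued map and that the Banach-valued Minkowski inequality applies to the resulting Bochner integral. The standard remedy is to first establish the inequality for Schwartz-class $f$, where the $\sigma$-integral and all Fourier manipulations are classical and absolutely convergent, and then extend by density, using $L^{2}$-boundedness of $P_{k}$ and $Q_{j}$ to control both sides continuously in the $L^{2}_{t,x}$ topology. The uniformity of $C(k)$ in the phase parameter $\tau_{0}$, which is built into the hypothesis, is what allows the constant to be pulled outside the $\sigma$-integral without deterioration.
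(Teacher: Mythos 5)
Your argument is correct: the foliation $\sigma=\tau+|\xi|^{2}$, the representation of $Q_{j}P_{k}f$ as a superposition $\int e^{it\sigma}S(t)h_{\sigma}\,d\sigma$, Minkowski plus the hypothesis (uniform in $\tau_{0}$), Cauchy--Schwarz over the shell $|\sigma|\sim 2^{j}$, Plancherel, and summation in $j$ is exactly the standard transfer-principle proof, which is what the paper relies on here --- the paper itself gives no proof and simply cites \cite[Proposition 5.4]{Wang}, whose argument is the one you have reconstructed. Your closing remarks on strong measurability and extension by density from Schwartz class address the only genuine technical point for an ``arbitrary'' space-time Banach space $\mathbb{X}$, so nothing essential is missing.
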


In the following we discuss the linear estimates of $u(t,x)$ in each of the three spaces.
\begin{lemma}[The general Lebesgue space]\label{Leb-linear}
Assume that $n\geq3$, $u(t,x)$ is the solution of \eqref{deri-Ginz}, $J(u(t,x))$ is the nonlinear term, for any $\varepsilon>0$,
\begin{align*}
u_{t}-(\varepsilon+i)\Delta u=J(u(t,x)),\quad u(0,x)=u_{0}.
\end{align*}
Then for any $\mathbf{e}_{i}\in \mathbb{S}^{n-1}$, we have
\begin{align}
\|P_{k}u\|_{L^{2}_{t}L^{\frac{2n}{n-2}}_{x}\cap L^{\infty}_{t}L^{2}_{x}}
\leq C\left(\|u_{0}\|_{L^{2}_{x}}+\|J(u(t,x))\|_{N_{k}}\right),\label{Leb-linear-eq}
\end{align}
where the constant $C>0$ is independent of $\varepsilon$ and $k$.
\end{lemma}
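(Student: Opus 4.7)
The plan is to invoke Duhamel's formula
\begin{align*}
P_k u(t) = e^{(\varepsilon+i)t\Delta} P_k u_0 + \int_0^t e^{(\varepsilon+i)(t-s)\Delta} P_k J(u(s))\, ds
\end{align*}
and bound the two contributions in $L^\infty_t L^2_x \cap L^2_t L^{2n/(n-2)}_x$ separately. For the homogeneous part I would exploit that on the Fourier side the dissipative factor is $e^{-\varepsilon t |\xi|^2}$, which on the dyadic shell $|\xi|\sim 2^k$ is a non-negative multiplier with sup norm $\le 1$; a Mikhlin-type argument shows its operator norm on the relevant mixed Lebesgue spaces is bounded independently of $\varepsilon$ and $k$. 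Factoring it out and applying the first line of Lemma \ref{semi-stri} to $e^{it\Delta} P_k u_0$ yields the desired bound by $\|u_0\|_{L^2_x}$.

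For the Duhamel term I would unpack the definition of $\|P_k J\|_{N_k}$ and, up to an $\varepsilon$-independent constant, realise the infimum by a decomposition $P_k J = J_1 + J_2 + J_3$ together with the additive residual $2^{-k}\|P_k J\|_{L^2_{t,x}}$. The $J_1 \in L^1_t L^2_x$ piece is handled by Minkowski in $s$ followed by the homogeneous estimate just obtained. The anisotropic piece $J_2$ is handled by a $TT^{\ast}$ argument based on the third inequality of Lemma \ref{semi-stri}, whose dualisation produces the inhomogeneous bound with precisely the weight $2^{-k/2}$ matching $2^{-k/2}\sup_{\mathbf{e}_i}\|J_2\|_{L^{1,2}_{\mathbf{e}_i}}$. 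The Bourgain piece $J_3 \in X^{0,-1/2,1}$ is handled by the duality between $X^{0,1/2,1}$ and $X^{0,-1/2,1}$ combined with Lemma \ref{semi-Banach}, which converts the resulting $X^{0,1/2,1}$ bound into a bound in the Strichartz norm.

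The main obstacle, and the reason $\|\cdot\|_{N_k}$ carries an \emph{additive} $2^{-k}\|\cdot\|_{L^2_{t,x}}$ term, is that neither the Minkowski route nor the Bourgain duality recovers the gain $2^{-k}$ from a generic $L^2_{t,x}$ datum without introducing an $\varepsilon$-dependent constant. My plan here is to pass to the Fourier side: the symbol of $\partial_t - (\varepsilon+i)\Delta$ is $p(\tau,\xi) = i\tau + (\varepsilon+i)|\xi|^2$, whose modulus satisfies $|p(\tau,\xi)| \gtrsim \bigl|\tau + |\xi|^2\bigr|$ uniformly in $\varepsilon$. Splitting $P_k J$ into its high-modulation part $Q_{\ge 2k}P_k J$ and its low-modulation part $Q_{\le 2k}P_k J$, on the high-modulation piece the direct inversion of $p$ gains $\bigl|\tau+|\xi|^2\bigr|^{-1} \lesssim 2^{-2k}$, producing the missing $2^{-k}$ factor via Bernstein and Plancherel; on the low-modulation piece the solution is already in $X^{0,1/2,1}$ and is absorbed by the Bourgain step described above. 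Both gains are $\varepsilon$-independent on the relevant dyadic shell, so assembling the four partial estimates and taking the infimum over decompositions delivers \eqref{Leb-linear-eq} with a constant uniform in $\varepsilon$ and $k$.
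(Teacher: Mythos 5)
Your overall skeleton coincides with the paper's: Duhamel's formula, factoring the dissipative part out (a positive unit-mass Gaussian convolution, equivalently your Mikhlin-type remark) so that Lemma \ref{semi-stri} applies to the homogeneous part, Minkowski in $s$ for the $L^{1}_{t}L^{2}_{x}$ piece (this is fine, and even a bit cleaner than the paper's appeal to the Keel--Tao inhomogeneous estimate), and Lemma \ref{semi-Banach} combined with the multiplier bound $|\tau+|\xi|^{2}+i\varepsilon|\xi|^{2}|^{-1}\lesssim 2^{-j}$ on modulation $2^{j}$ for the $X^{0,-\frac12,1}$ piece. But there are two genuine problems. First, you have misread the $N_{k}$ norm: the term $2^{-k}\|u\|_{L^{2}_{t,x}}$ is \emph{added} to the infimum over three-part decompositions, so on the right of \eqref{Leb-linear-eq} it is a harmless nonnegative extra and no estimate ``from a generic $L^{2}_{t,x}$ datum'' is needed at all. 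The estimate you try to supply for it is moreover false as an $\varepsilon$-uniform statement: on high modulations $|\tau+|\xi|^{2}|\gtrsim 2^{2k}$ the inversion of $p(\tau,\xi)$ does gain $2^{-2k}$ uniformly in $\varepsilon$, but on low modulations the claim that the solution ``is already in $X^{0,1/2,1}$'' with bound $2^{-k}\|J\|_{L^{2}_{t,x}}$ fails, since the only $\varepsilon$-independent bound on $|p|^{-1}$ there is $|\tau+|\xi|^{2}|^{-1}$, which yields $\sum_{j\le 2k}2^{-j/2}\|Q_{j}P_{k}J\|_{L^{2}_{t,x}}$ and this is not controlled by $2^{-k}\|J\|_{L^{2}_{t,x}}$ (concentrate $J$ at a single very low modulation), while the alternative bound $(\varepsilon|\xi|^{2})^{-1}$ costs $\varepsilon^{-1}$. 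Fortunately this entire step can simply be deleted.

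Second, and more seriously, the anisotropic piece $J_{2}$ is where the real work of this lemma lies, and the one-line ``$TT^{\ast}$ argument based on the third inequality of Lemma \ref{semi-stri}, whose dualisation produces the inhomogeneous bound'' does not go through as stated. Dualising the local-smoothing estimate controls the untruncated operator $\int_{\mathbb{R}}e^{-is\Delta}P_{k,\mathbf{e}_{i}}J_{2}(s)\,ds$, whereas the Duhamel term is the retarded integral and carries the dissipative factor $e^{\varepsilon(t-s)\Delta}$: the untruncated version of that operator is not even defined for $s>t$, the dissipation destroys the group structure a $TT^{\ast}$ argument relies on, and the Christ--Kiselev upgrade from untruncated to retarded estimates is unavailable at these endpoint exponents. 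Getting this bound uniformly in $\varepsilon$ is exactly why the paper, at this point, defers to the proof of \eqref{3.12} in Lemma \ref{anisLeg-linear}, where the inhomogeneous term is analysed directly on the Fourier side: the resolvent $(\tau+|\xi|^{2}+i\varepsilon|\xi|^{2})^{-1}$ is split over the regions $\{-\tau-|\bar{\xi}_{i}|^{2}\sim 2^{2k}\}^{c}$, $\{|\tau+|\xi|^{2}|\lesssim\varepsilon 2^{2k}\}$ and $\{|\tau+|\xi|^{2}|\gg\varepsilon 2^{2k}\}$, with a Taylor expansion in $\varepsilon$ in the last region, precisely to extract constants independent of $\varepsilon$. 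You need either to reproduce (or cite) that analysis or to give a genuinely $\varepsilon$-uniform substitute; as written, the $J_{2}$ step is a gap.
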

\begin{proof}
By the Duhamel's principle, we obtain
\begin{align*}
u=e^{(\varepsilon+ i)t\Delta}u_{0}+\int_{0}^{t}e^{(\varepsilon+ i)(t-s)\Delta}J(s)ds.
\end{align*}

Firstly, assuming that the term $J(u(t,x))=0$ and using the Fourier transform, one has
\begin{align*}
e^{\varepsilon t\Delta}(e^{i t\Delta}u_{0})
&=\mathscr{F}^{-1}_{\xi}\{e^{-\varepsilon t|\xi|^{2}}\cdot e^{-i t|\xi|^{2}}\hat{u_{0}}\}\\
%&=\mathscr{F}^{-1}_{\xi}\bigg\{\mathscr{F}_{x}\{e^{\varepsilon t\Delta}\}\cdot\mathscr{F}_{x}\{ e^{i t\Delta}u_{0}\}\bigg\}\\
%&=\mathscr{F}^{-1}_{\xi}\bigg\{\mathscr{F}_{x}\{e^{\varepsilon t\Delta}\ast e^{i t\Delta}u_{0}\}\bigg\}\\
&=\mathscr{F}^{-1}_{\xi}\{e^{-\varepsilon t|\xi|^{2}}\}\ast e^{i t\Delta}u_{0} \\
&\lesssim (\varepsilon t)^{-\frac{n}{2}}e^{-\frac{x^{2}}{4\varepsilon t}} \ast e^{i t\Delta}u_{0},
\end{align*}
then by the $p-p$ boundedness of the operator $P_{k}$, Young's inequality and Lemma \ref{semi-stri}, we have
\begin{align*}
\|P_{k}u\|_{L^{2}_{t}L^{\frac{2n}{n-2}}_{x}\cap L^{\infty}_{t}L^{2}_{x}}
&\lesssim \|e^{(\varepsilon+ i)t\Delta}u_{0}\|_{L^{2}_{t}L^{\frac{2n}{n-2}}_{x}\cap L^{\infty}_{t}L^{2}_{x}} \\
&\lesssim\|(\varepsilon t)^{-\frac{n}{2}}e^{-\frac{x^{2}}{4\varepsilon t}}\|_{L^{2}_{t}L^{1}_{x}\cap L^{\infty}_{t}L^{1}_{x}}
\|e^{i t\Delta}u_{0}\|_{L^{2}_{t}L^{\frac{2n}{n-2}}_{x}\cap L^{\infty}_{t}L^{2}_{x}}  \\
%&\lesssim\|e^{it\Delta}u_{0}\|_{L^{2}_{t}L^{\frac{2n}{n-2}}_{x}\cap L^{\infty}_{t}L^{2}_{x}}
&\lesssim \|u_{0}\|_{L^{2}_{x}}.
\end{align*}

Secondly, we suppose that $u_{0}=0$. Notice that the norm $N^{\frac{n}{2}}$ consists of four parts, which are proved separately below. We extend the term $J(u(t,x))$ to the term $\tilde{J}(u(t,x))$ satisfying
\begin{align*}
\|\tilde{J}(u(t,x))\|_{X^{0,-\frac{1}{2},\infty}}\lesssim \|J(u(t,x))\|_{X^{0,-\frac{1}{2},\infty}_{+}}.
\end{align*}
and define $\tilde{u}=\mathscr{F}^{-1}_{\tau,\xi}\frac{1}{\tau+|\xi|^{2}+i\varepsilon|\xi|^{2}}\mathscr{F}_{t,x}\tilde{J}(u(t,x))$.

Next, we want to show
\begin{align*}
\|P_{k}u\|_{L^{2}_{t}L^{\frac{2n}{n-2}}_{x}\cap L^{\infty}_{t}L^{2}_{x}}
\lesssim \|J(u(t,x))\|_{X^{0,-\frac{1}{2},1}_{+}}.
\end{align*}
Using Lemma \ref{semi-Banach}, we get
\begin{align}
\begin{split}
\|P_{k}\tilde{u}(t,x)\|_{L^{2}_{t}L^{\frac{2n}{n-2}}_{x}\cap L^{\infty}_{t}L^{2}_{x}}
&\lesssim \|P_{k}\tilde{u}(t,x)\|_{X^{0,\frac{1}{2},1}} \\
&\lesssim \sum_{j\in\mathbb{Z}}2^{\frac{j}{2}}\left\|\mathscr{F}^{-1}_{\tau,\xi}\bigg\{\chi_{j}(\tau+|\xi|^{2})\chi_{k}(\xi)
\frac{1}{\tau+|\xi|^{2}+i\varepsilon|\xi|^{2}}\mathscr{F}_{t,x}\{\tilde{J}(u)\}\bigg\}\right\|_{L^{2}_{t,x}} \\
&\lesssim \sum_{j\in\mathbb{Z}}2^{\frac{j}{2}}\left\|\mathscr{F}^{-1}_{\tau,\xi}\bigg\{\chi_{j}(\tau+|\xi|^{2})
\mathscr{F}_{t,x}\{\tilde{J}(u)\}\bigg\}\right\|_{L^{2}_{t,x}}\cdot2^{-j} \\
%&=\|\tilde{J}(u(t,x))\|_{X^{0,-\frac{1}{2},1}}
&\lesssim \|J(u(t,x))\|_{X^{0,-\frac{1}{2},1}_{+}}. \label{3.2}
\end{split}
\end{align}
Inspired by \cite{Keel}, we can easily show that
\begin{align*}
\|P_{k}u\|_{L^{2}_{t}L^{\frac{2n}{n-2}}_{x}}\lesssim \|J(u(t,x))\|_{L^{1}_{t}L^{2}_{x}}.
\end{align*}
Notice that $(2,\frac{2n}{n-2})\in \Lambda_{0}$ (see \cite{Keel}), and $(q,r)=(2,\frac{2n}{n-2}), (q',r')=(1,2)$ satisfy
\begin{align}
\frac{1}{q}+\frac{n}{r}=\frac{1}{q'}+\frac{n}{r'}-2. \label{3.9}
\end{align}
By Theorem 2.1 in \cite[Introduction]{Keel}, we obtain
\begin{align}
&\left\|\int_{0}^{t}e^{i(t-s)\Delta} J(u(s))ds\right\|_{L^{2}_{t}L^{\frac{2n}{n-2}}_{x}}\lesssim \|J(u)\|_{L^{1}_{t}L^{2}_{x}},\label{3.9.1}\\
&\|e^{it\Delta} u(0)\|_{L^{\infty}_{t}L^{2}_{x}}\lesssim \|u(0)\|_{L^{2}},\label{3.9.2}\\
&\left\|\int_{-\infty}^{+\infty}e^{-is\Delta}J(u(s))ds\right\|_{L^{2}}\lesssim \|J(u)\|_{L^{1}_{t}L^{2}_{x}}.\label{3.9.3}
\end{align}
Combining \eqref{3.9.1} with  Young's inequality, one has
\begin{align*}
&\left\|\int_{0}^{t}e^{i(t-s)\Delta+\varepsilon(t-s)\Delta} J(u(s))ds\right\|_{L^{2}_{t}L^{\frac{2n}{n-2}}_{x}}\\
&\lesssim \left\|\int_{0}^{t}[\varepsilon(t-s)]^{-\frac{n}{2}}\|e^{-\frac{x^{2}}{4\varepsilon(t-s)}}\ast e^{i(t-s)\Delta}J(u(s))\|
_{L^{\frac{2n}{n-2}}_{x}}ds\right\|_{L^{2}_{t}}\\
&\lesssim \left\|\int_{0}^{t}e^{i(t-s)\Delta}J(u(s))ds\right\|_{L^{2}_{t}L^{\frac{2n}{n-2}}_{x}}
\lesssim \|J(u)\|_{L^{1}_{t}L^{2}_{x}}.
\end{align*}

Now we prove that
\begin{align*}
\|P_{k}u\|_{L^{\infty}_{t}L^{2}_{x}}\lesssim \|J(u(t,x))\|_{L^{1}_{t}L^{2}_{x}}.
\end{align*}
Unlike the previous proof, $(\infty,2), (1,2)$ are conjugate indices and do not satisfy the relationship of \eqref{3.9}. Considering \eqref{3.9.2} and \eqref{3.9.3}, employing the Plancherel equality and Bochner's inequality, we find that
\begin{align*}
\|P_{k}u\|_{L^{\infty}_{t}L^{2}_{x}}
&\lesssim \left\|\int_{0}^{t}\|e^{-i(t-s)|\xi|^{2}-\varepsilon(t-s)|\xi|^{2}}\hat{J}(u(s))\|_{L^{\infty}_{t}}ds\right\|_{L^{2}_{xi}}\\
&\lesssim \left\|\int_{-\infty}^{+\infty}e^{is|\xi|^{2}}\hat{J}(u(s))ds\right\|_{L^{2}_{\xi}}
\|e^{-it|\xi|^{2}-\varepsilon(t-s)|\xi|^{2}}\|_{L^{\infty}_{t,s,\xi}}\\
%&\lesssim\left\|\int_{-\infty}^{+\infty}e^{is\Delta}J(s,x)ds\right\|_{L^{2}_{x}}
&\lesssim\|J(u)\|_{L^{1}_{t}L^{2}_{x}}.
\end{align*}

Finally, we show that
\begin{align*}
\|P_{k}u\|_{L^{2}_{t}L^{\frac{2n}{n-2}}_{x}\cap L^{\infty}_{t}L^{2}_{x}}
\lesssim 2^{-\frac{k}{2}}\|J(u(t,x))\|_{L^{1,2}_{\mathbf{e}_{j}}}.
\end{align*}
Similar to the argument of \eqref{3.12} in Lemma \ref{anisLeg-linear}, we can get the above estimate. Then, we complete the proof of \eqref{Leb-linear-eq}.
\end{proof}

\begin{lemma}[The anisotropic Lebesgue space]\label{anisLeg-linear}
Assume that $n\geq3$, $u(t,x)$ is the solution of \eqref{deri-Ginz}, $J(u(t,x))$ is the nonlinear term, for any $\varepsilon>0$,
\begin{align*}
u_{t}-(\varepsilon+i)\Delta u=J(u(t,x)),\quad u(0,x)=u_{0},
\end{align*}
for any $\mathbf{e}_{i}\in \mathbb{S}^{n-1}$, we have
\begin{align}
&\|P_{k}u\|_{L^{2,\infty}_{\mathbf{e}_{i}}}
\leq C\big( 2^{k(n-1)/2}\|u_{0}\|_{L^{2}_{x}}+2^{k(n-2)/2}\sup_{\mathbf{e}_{i}\in \mathbb{S}^{n-1}}\|J(u(t,x))\|_{L^{1,2}_{\mathbf{e}_{i}}}\big),\label{3.12}\\
&\|P_{k,\mathbf{e}_{i}}u\|_{L^{\infty,2}_{\mathbf{e}_{i}}}
\leq C\big( 2^{-k/2}\|u_{0}\|_{L^{2}_{x}}+2^{-k}\sup_{\mathbf{e}_{i}\in \mathbb{S}^{n-1}}
\|J(u(t,x))\|_{L^{1,2}_{\mathbf{e}_{i}}}\big),\label{3.13}
\end{align}
where the constant $C>0$ is independent of $\varepsilon$ and $k$.
\end{lemma}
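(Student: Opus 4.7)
The plan is to mirror the two-step strategy of Lemma \ref{Leb-linear}: write the solution via Duhamel's formula
$$u(t,x)=e^{(\varepsilon+i)t\Delta}u_{0}+\int_{0}^{t}e^{(\varepsilon+i)(t-s)\Delta}J(u(s))\,ds,$$
split the estimate into a homogeneous piece and an inhomogeneous piece, and in both cases use Young's inequality in the spatial variable to absorb the dissipative factor $e^{\varepsilon t\Delta}$. Concretely, $e^{(\varepsilon+i)t\Delta}=\mathscr{F}^{-1}_{\xi}\{e^{-\varepsilon t|\xi|^{2}}\}\ast_{x}e^{it\Delta}$, and the Gaussian kernel has $L^{1}_{x}$-mass identically equal to $1$ uniformly in $t>0$ and $\varepsilon>0$; since the anisotropic norms $L^{2,\infty}_{\mathbf{e}_{i}}$ and $L^{\infty,2}_{\mathbf{e}_{i}}$ are translation-invariant in $x$, convolution with an $L^{1}_{x}$ kernel is bounded on them, which immediately transfers every bound on $e^{it\Delta}$ to $e^{(\varepsilon+i)t\Delta}$ with an $\varepsilon$-independent constant.

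For the homogeneous contribution, applying this transfer together with the second and third estimates of Lemma \ref{semi-stri} yields $\|e^{(\varepsilon+i)t\Delta}P_{k}u_{0}\|_{L^{2,\infty}_{\mathbf{e}_{i}}}\lesssim 2^{(n-1)k/2}\|u_{0}\|_{L^{2}_{x}}$ and $\|e^{(\varepsilon+i)t\Delta}P_{k,\mathbf{e}_{i}}u_{0}\|_{L^{\infty,2}_{\mathbf{e}_{i}}}\lesssim 2^{-k/2}\|u_{0}\|_{L^{2}_{x}}$, producing the first terms on the right of \eqref{3.12}--\eqref{3.13}. For the inhomogeneous contribution, after the same Gaussian reduction the task becomes bounding $\int_{0}^{t}e^{i(t-s)\Delta}J(s)\,ds$, which I would handle by a $TT^{\ast}$ pairing. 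Dualizing the third inequality of Lemma \ref{semi-stri} in the $(\lambda,y,t)$ variables gives
$$\Bigl\|\int_{\mathbb{R}}e^{-is\Delta}P_{k,\mathbf{e}_{i}}g(s)\,ds\Bigr\|_{L^{2}_{x}}\lesssim 2^{-k/2}\|g\|_{L^{1,2}_{\mathbf{e}_{i}}},$$
because the dual of $L^{\infty,2}_{\mathbf{e}_{i}}$ is $L^{1,2}_{\mathbf{e}_{i}}$. Composing this with the $L^{\infty,2}_{\mathbf{e}_{i}}$ forward estimate produces the factor $2^{-k/2}\cdot 2^{-k/2}=2^{-k}$ of \eqref{3.13}; composing it instead with the $L^{2,\infty}_{\mathbf{e}_{i}}$ forward estimate (second line of Lemma \ref{semi-stri}) produces $2^{(n-1)k/2}\cdot 2^{-k/2}=2^{(n-2)k/2}$, which is exactly the exponent in \eqref{3.12}.

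The main technical obstacle is the retarded time cut-off $\int_{0}^{t}$ rather than $\int_{\mathbb{R}}$ in the Duhamel integral: the $TT^{\ast}$ identity above is naturally a full-time statement, so converting it into a retarded bound on the mixed-norm spaces $L^{2,\infty}_{\mathbf{e}_{i}}$ and $L^{\infty,2}_{\mathbf{e}_{i}}$ requires a Christ--Kiselev-type truncation, which must be justified on these anisotropic endpoint norms. Once this is in place, $\varepsilon$-uniformity is automatic from the $\varepsilon$-independent $L^{1}_{x}$-bound on the Gaussian, and the direction-localized projector $P_{k,\mathbf{e}_{i}}$ is preserved throughout because the Littlewood--Paley cut-off commutes both with $e^{it\Delta}$ and with convolution in $x$.
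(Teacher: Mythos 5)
Your exponent bookkeeping is right ($2^{(n-1)k/2}\cdot 2^{-k/2}=2^{(n-2)k/2}$ for \eqref{3.12}, $2^{-k/2}\cdot 2^{-k/2}=2^{-k}$ for \eqref{3.13}), but the two steps on which the whole plan rests are genuinely gapped. First, the ``Gaussian transfer'' from $e^{it\Delta}$ to $e^{(\varepsilon+i)t\Delta}$: the kernel $\mathscr{F}^{-1}_\xi\{e^{-\varepsilon t|\xi|^2}\}$ has unit $L^1_x$ mass, but its width $\sqrt{\varepsilon t}$ depends on $t$, and in the norms $L^{2,\infty}_{\mathbf{e}_i}=L^2_{x_i}L^\infty_{\bar{x}_i,t}$ and $L^{\infty,2}_{\mathbf{e}_i}=L^\infty_{x_i}L^2_{\bar{x}_i,t}$ the time variable sits \emph{inside}. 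Young/Minkowski only handles a fixed kernel (or time as the outermost variable, which is why the analogous step is harmless in Lemma \ref{Leb-linear}). For $L^2_{x_i}L^\infty_{\bar{x}_i,t}$ one can repair this with a Hardy--Littlewood maximal bound, but for $L^\infty_{x_i}L^2_{\bar{x}_i,t}$ the needed vector-valued maximal inequality fails at $p=\infty$, and without exploiting the localization $|\xi|\sim 2^k$ the map $F(t,\cdot)\mapsto e^{\varepsilon t\Delta}F(t,\cdot)$ is in fact unbounded on $L^\infty_{x_i}L^2_{\bar{x}_i,t}$ (stack data on annuli $|x_i|\sim 2^j$ supported on disjoint time intervals with $\sqrt{\varepsilon t}\sim 2^j$). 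Worse, in the Duhamel term the heat factor is $e^{\varepsilon(t-s)\Delta}$, so there is no single convolution kernel to pull out at all; the reduction ``the task becomes bounding $\int_0^t e^{i(t-s)\Delta}J\,ds$'' is not available. Uniformity in $\varepsilon$ is precisely the delicate point here, and it is why the paper does not transfer: it works with the space--time multiplier $\frac{1}{i(\tau+\xi_i^2)+\varepsilon\xi_i^2}$ directly, proving \eqref{3.13} from the kernel bound $\|K(\tau,z_i)\|_{L^\infty_{\tau,z_i}}\lesssim 2^{-k}$ on the support $\xi_i\sim 2^k$, and proving \eqref{3.12} by splitting the $(\tau,\xi)$ region into $u_1+u_2+u_3$ and Taylor-expanding the multiplier in $\varepsilon$, with the main term reduced to the $\varepsilon=0$ result of Ionescu--Kenig and the remainders estimated separately.

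Second, the retarded cut-off. You correctly identify it as the main obstacle, but the Christ--Kiselev route you propose does not close it: Christ--Kiselev requires a strict gap between the time-integrability of source and target, whereas here the source $L^{1,2}_{\mathbf{e}_i}=L^1_{x_i}L^2_{\bar{x}_i,t}$ and the target $L^{\infty,2}_{\mathbf{e}_i}=L^\infty_{x_i}L^2_{\bar{x}_i,t}$ carry the \emph{same} inner exponent $2$ in time (and the lemma is not formulated for norms in which time is an inner variable mixed with $\bar{x}_i$). The retarded inhomogeneous smoothing estimate $L^{1,2}_{\mathbf{e}_i}\to L^{\infty,2}_{\mathbf{e}_i}$ with gain $2^{-k}$ is a genuine estimate that must be proven directly -- this is exactly what the explicit computation of $K(\tau,z_i)$ accomplishes in the paper's proof, and what \cite[Lemma 4.1]{IonK} provides for the $\varepsilon=0$ piece of \eqref{3.12}. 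So the proposal, as written, leaves both the $\varepsilon$-uniform dissipative transfer and the retarded endpoint estimate unproven, and these are the substance of Lemma \ref{anisLeg-linear}.
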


\begin{proof}
First, we show the second inequality \eqref{3.13}. Assuming that the term $J(u(t,x))=0$, using the Plancherel equality, convolution theorem and Young's inequality in convolution form, we get
\begin{align*}
\|P_{k,\mathbf{e}_{i}}u\|_{L^{\infty,2}_{\mathbf{e}_{i}}}
&\lesssim \left\|\mathscr{F}^{-1}_{\xi_{i}}\big\{e^{-(i+\varepsilon)t\xi_{i}^{2}}\mathscr{F}_{x}u_{0}(x)\big\}\right\|_{L^{\infty}_{x_{i}}
L^{2}_{\bar{\xi}_{i}}L^{2}_{t}}\\
&=\left\|\mathscr{F}^{-1}_{\xi_{i}}\big\{\mathscr{F}_{x_{i}}\{e^{(i+\varepsilon)t\partial^{2}x_{i}}\}\cdot
\mathscr{F}_{x_{i}}\{\mathscr{F}_{\bar{x}_{i}}u_{0}(x)\}\big\}\right\|_{L^{\infty}_{x_{i}}L^{2}_{\bar{\xi}_{i}}L^{2}_{t}}\\
&\lesssim\|e^{(i+\varepsilon)t\partial^{2}x_{i}}\|_{L^{2}_{x_{i}}L^{2}_{t}}
\|\mathscr{F}_{\bar{x}_{i}}u_{0}(x)\|_{L^{2}_{x_{i},\bar{\xi}_{i}}}\\
&\lesssim 2^{-\frac{k}{2}}\|u_{0}(x)\|_{L^{2}_{x}}.
\end{align*}

Next we assume that the term $u_{0}=0$ (see \cite[Lemma 2.4]{HanW}). We divide the Fourier transform of space into $x_{i}$ and $\bar{x}_{i}$, use the Plancherel equality and the convolution theorem to get
\begin{align*}
\|P_{k,\mathbf{e}_{i}}u\|_{L^{\infty,2}_{\mathbf{e}_{i}}}
&\lesssim \left\|\mathscr{F}^{-1}_{\xi_{i}}\mathscr{F}_{t}\bigg\{\int_{-\infty}^{+\infty}
e^{-i(t-s)\xi_{i}^{2}-\varepsilon(t-s)\xi_{i}^{2}}\mathscr{F}_{x}J(u(s,x))ds\bigg\}\right\|_{L^{\infty}_{x_{i}}
L^{2}_{\bar{\xi}_{i}}L^{2}_{\tau}}\\
&\lesssim \left\|\mathscr{F}^{-1}_{\xi_{i}}\bigg\{\mathscr{F}_{t}\big\{e^{-it\xi_{i}^{2}-\varepsilon t\xi_{i}^{2}}\big\}
\cdot \mathscr{F}_{t}\big\{\mathscr{F}_{x}J(u(s,x))\big\}\bigg\}\right\|_{L^{\infty}_{x_{i}}L^{2}_{\bar{\xi}_{i}}L^{2}_{\tau}}\\
&\lesssim \left\|\mathscr{F}^{-1}_{\tau,\xi_{i}}\bigg\{\frac{1}{i(\tau+\xi_{i}^{2})+\varepsilon \xi_{i}^{2}}
 \mathscr{F}_{t}\big\{\mathscr{F}_{y}J(u(s,y))\big\}\bigg\}\right\|_{L^{\infty}_{x_{i}}L^{2}_{\bar{\xi}_{i}}L^{2}_{t}}\\
&\lesssim \left\|\int_{y_{i}}K(\tau,z_{i})\mathscr{F}_{t,\bar{y}_{i}}\big\{J(u(s,y_{i},\bar{y}_{i}))\big\}dy_{i}\right\|
_{L^{\infty}_{x_{i}}L^{2}_{\bar{\xi}_{i}}L^{2}_{\tau}}\\
&\lesssim \|K(\tau,z_{i})\|_{L^{\infty}_{\tau,z_{i}}} \left\|\int_{y_{i}}\mathscr{F}_{t,\bar{y}_{i}}
\big\{J(u(s,y_{i},\bar{y}_{i}))\big\}dy_{i} \right\|_{L^{2}_{\tau,\bar{\xi}_{i}}}\\
&\lesssim \|K(\tau,z_{i})\|_{L^{\infty}_{\tau,z_{i}}} \|J(u(t,x))\|_{L^{1,2}_{\mathbf{e}_{i}}},
\end{align*}
where $z_{i}=x_{i}-y_{i}$ and
\begin{align*}
K(\tau,z_{i}):=\int_{\xi_{i}}e^{iz_{i}\xi_{i}}\frac{1}{i(\tau+\xi_{i}^{2})+\varepsilon \xi_{i}^{2}}d\xi_{i}.
\end{align*}
Since the operator $P_{k,\mathbf{e}_{i}}$ is supported in $\xi_{i}\sim 2^{k+9n}$, we can get
\begin{align*}
\|K(\tau,z_{i})\|_{L^{\infty}_{\tau,z_{i}}}
\lesssim \sup_{\tau\neq0}\left|\int_{\xi_{i}}\frac{1}{|\tau+\xi_{i}^{2}|}d\xi_{i}\right|
+\left|\int_{\xi_{i}}\frac{1}{\xi_{i}^{2}}d\xi_{i}\right|
\lesssim 2^{-k}.
\end{align*}
Then, we obtain \eqref{3.13}.

Next, we prove the inequality \eqref{3.12}. Assuming that the term $J(u(t,x))=0$, using the $p-p$ boundedness of the operator $P_{k}$,  and combining Lemma \ref{semi-stri}, it holds
\begin{align*}
\|P_{k}u\|_{L^{2,\infty}_{\mathbf{e}_{i}}}
&\lesssim \|e^{(\varepsilon+ i)t\Delta}u_{0}\|_{L^{2,\infty}_{\mathbf{e}_{i}}}
\lesssim\|u_{0}\|_{L^{2}_{x}},
\end{align*}
where the last line is obtained in the similar way as Lemma \ref{Leb-linear}.
%\begin{align*}
%e^{\varepsilon t\Delta}(e^{i t\Delta}u_{0})
%&=\mathscr{F}^{-1}_{\xi}\{e^{-\varepsilon t|\xi|^{2}}\cdot e^{-i t|\xi|^{2}}\hat{u_{0}}\}\\
%&=\mathscr{F}^{-1}_{\xi}\bigg\{\mathscr{F}_{x}\{e^{\varepsilon t\Delta}\}\cdot\mathscr{F}_{x}\{ e^{i t\Delta}u_{0}\}\bigg\}\\
%&=\mathscr{F}^{-1}_{\xi}\bigg\{\mathscr{F}_{x}\{e^{\varepsilon t\Delta}\ast e^{i t\Delta}u_{0}\}\bigg\}\\
%&=\mathscr{F}^{-1}_{\xi}\{e^{-\varepsilon t|\xi|^{2}}\}\ast e^{i t\Delta}u_{0}.
%\end{align*}
%Then we expand the first term of the above by the Fourier transform and make a scaling transform $\xi=\frac{z}{\sqrt{\varepsilon t}}+\frac{ix}{2\varepsilon t}$：
%\begin{align*}
%\mathscr{F}^{-1}_{\xi}\{e^{-\varepsilon t|\xi|^{2}}\}
%\lesssim (\varepsilon t)^{-\frac{n}{2}}e^{-\frac{x^{2}}{4\varepsilon t}}.
%\end{align*}
%Using again the Young's inequality in convolution form and Lemma \ref{semi-stri}, we have
%\begin{align*}
%\|e^{\varepsilon t\Delta}(e^{i t\Delta}u_{0})\|_{L^{2,\infty}_{\mathbf{e}_{i}}}
%&\lesssim \|(\varepsilon t)^{-\frac{n}{2}}e^{-\frac{x^{2}}{4\varepsilon t}}\|_{L^{1}_{x_{i}}L^{1}_{\bar{x}_{i}}}
%\|e^{i t\Delta}u_{0}\|_{L^{2}_{x_{i}}L^{\infty}_{\bar{x}_{i}}}\\
%&\lesssim\|e^{it\Delta}u_{0}\|_{L^{2}_{x_{i}}L^{\infty}_{\bar{x}_{i}}}
%\lesssim \|u_{0}\|_{L^{2}_{x}}.
%\end{align*}
We assume that the term $u_{0}=0$, decompose $P_{k}u=\sum_{i=1}^{n}U_{i}$ such that $\mathscr{F}_{x}U_{i}$ is supported in $\{|\xi|\sim2^{k}: \xi_{i}\sim 2^{k}\}$ and decompose the term $u(t,x)$ as follows:
\begin{align*}
u(t,x)&=\int_{\mathbb{R}^{n+1}}\frac{e^{it\tau}e^{ix\xi}}{\tau+|\xi|^{2}+i\varepsilon |\xi|^{2}}\hat{J}(\tau,\xi)d\xi d\tau \\
&=\int_{\mathbb{R}^{n+1}}\frac{e^{it\tau}e^{ix\xi}}{\tau+|\xi|^{2}+i\varepsilon |\xi|^{2}}\hat{J}(\tau,\xi)d\xi d\tau
(1_{\{-\tau-|\bar{\xi}_{i}|^{2}\sim 2^{2k}\}^{c}}+1_{\{-\tau-|\bar{\xi}_{i}|^{2}\sim 2^{2k},\;|\tau+|\xi|^{2}|\lesssim\varepsilon 2^{2k}\}}\\
&\quad+1_{\{-\tau-|\bar{\xi}_{i}|^{2}\sim 2^{2k},\;|\tau+|\xi|^{2}|\gg\varepsilon 2^{2k}\}})d\xi d\tau \\
&=:u_{1}+u_{2}+u_{3}.
\end{align*}
For the term $u_{1}$, applying the Plancherel equality and the boundedness, we get
\begin{align*}
\|\Delta u_{1}\|_{L^{2}_{t,x}}
\lesssim \left\|\frac{|\xi|^{2}}{\tau+|\xi|^{2}+i\varepsilon|\xi|^{2}}\right\|_{L^{\infty}_{\tau,\xi}}\|J(t,x)\|_{L^{2}_{t,x}}
\lesssim \|J(t,x)\|_{L^{2}_{x}},\\
\|\partial_{t} u_{1}\|_{L^{2}_{t,x}}
\lesssim \left\|\frac{i\tau}{\tau+|\xi|^{2}+i\varepsilon|\xi|^{2}}\right\|_{L^{\infty}_{\tau,\xi}}\|J(t,x)\|_{L^{2}_{t,x}}
\lesssim \|J(t,x)\|_{L^{2}_{t,x}}.
\end{align*}
By the Sobolev embedding, we deduce that
\begin{align*}
\|u_{1}\|_{L^{2}_{x_{i}}L^{\infty}_{\bar{x}_{i}}}\lesssim \|u_{1}\|_{W^{2,2}_{x_{i}},W^{2,2}_{\bar{x}_{i}}}
\lesssim \|J(t,x)\|_{L^{2}_{x}},
\end{align*}
\begin{align*}
\|u_{1}\|_{L^{\infty}_{t}} \lesssim \|u_{1}\|_{W^{1,2}_{t}} \lesssim \|J(t,x)\|_{L^{2}_{t}},
\end{align*}
and then by applying Bernstein's inequality we obtain
\begin{align*}
\|u_{1}\|_{L^{2,\infty}_{\mathbf{e}_{i}}}\lesssim \|J(t,x)\|_{L^{2}_{t,x}} \lesssim2^{k/2}\|J(t,x)\|_{L^{1,2}_{\mathbf{e}_{i}}}
\lesssim2^{(n-2)k/2}\|J(t,x)\|_{L^{1,2}_{\mathbf{e}_{i}}}.
\end{align*}
For the term $u_{2}$, using Lemma \ref{semi-stri}, we get
\begin{align*}
\|e^{it\tau_{0}}e^{it\Delta}P_{k}u_{2}(0)\|_{L^{2,\infty}_{\mathbf{e}_{i}}}
\lesssim2^{(n-1)k/2}\|P_{k}u_{2}(0)\|_{L^{2}_{t,x}}.
\end{align*}
By Lemma \ref{semi-Banach} and the Plancherel equality, we have
\begin{align*}
&\|P_{k}u_{2}\|_{L^{2,\infty}_{\mathbf{e}_{i}}}\\
&\lesssim 2^{(n-1)k/2}\|P_{k}u_{2}\|_{X^{0,\frac{1}{2},1}}\\
&\lesssim 2^{(n-1)k/2}\sum_{j\leq \log_{2}{(\varepsilon 2^{2k})}}2^{\frac{j}{2}}
\left\|\int_{\mathbb{R}^{n+1}}\frac{e^{it\tau}e^{ix\xi}}{\tau+|\xi|^{2}+i\varepsilon |\xi|^{2}}\hat{J}(\tau,\xi)
1_{\{-\tau-|\overline{\xi_{i}}|^{2}\sim 2^{2k},\;|\tau+|\xi|^{2}|\lesssim\varepsilon 2^{2k}\}}d\xi d\tau\right\|_{L^{2}_{t,x}} \\
&\lesssim 2^{(n-1)k/2}\sum_{j\leq \log_{2}{(\varepsilon 2^{2k})}}2^{\frac{j}{2}} \left\|\frac{1}{\tau+|\xi|^{2}+i\varepsilon |\xi|^{2}}\right\|_{L^{\infty}_{\tau,\xi}}
\|J(t,x)\|_{L^{2}_{t,x}} \\
&\lesssim 2^{(n-1)k/2}\varepsilon^{\frac{1}{2}}2^{k}\cdot\varepsilon^{-1}2^{-2k}\|J(t,x)\|_{L^{2}_{t,x}}\\
&\lesssim \varepsilon^{-\frac{1}{2}}2^{\frac{(n-3)}{2}k}\|J(t,x)\|_{L^{2}_{t,x}}.
\end{align*}
Letting $\lambda:=\sqrt{-\tau-|\bar{\xi}_{i}|^{2}}$, observe that $\tau+|\xi|^{2}=-(\lambda-\xi_{i})(\lambda+\xi_{i})$, thus we get
$|\lambda+\xi_{i}|\sim 2^{k},\; |\lambda-\xi_{i}|\lesssim \varepsilon 2^{k}$, and $\xi_{i}\sim \varepsilon 2^{k}$. We use Bernstein's inequality with respect to $x_{i}$ to obtain the desired estimate:
\begin{align*}
\varepsilon^{-\frac{1}{2}}2^{\frac{(n-3)}{2}k}\|J(t,x)\|_{L^{2}_{t,x}}
\lesssim 2^{(n-2)k/2}\|J(t,x)\|_{L^{1,2}_{\mathbf{e}_{i}}}.
\end{align*}
For the term $u_{3}$, by Taylor's expansion: {\small
\begin{align*}
\frac{1}{\tau+|\xi|^{2}+i\varepsilon |\xi|^{2}}
&=\frac{1}{\tau+|\xi|^{2}}+\sum_{k=1}^{\infty}\frac{(-i\varepsilon|\xi|^{2})^{k}}{(2\lambda(\xi_{i}-\lambda))^{k+1}}
+\sum_{k=1}^{\infty}\frac{(-i\varepsilon|\xi|^{2})^{k}}{(2\lambda(\xi_{i}-\lambda))^{k+1}}
\left[\left(1+\frac{\lambda-\xi_{i}}{\lambda+\xi_{i}}\right)^{k+1}-1\right]\\
&=:\varphi_{1}(\tau,\xi)+\varphi_{2}(\tau,\xi)+\varphi_{3}(\tau,\xi),
\end{align*}}
one has
\begin{align*}
u_{3}^{j}=\int_{\mathbb{R}^{n+1}}e^{it\tau}e^{ix\xi}\varphi_{j}(\tau,\xi)\hat{J}(\tau,\xi)1_{\{-\tau-|\bar{\xi}_{i}|^{2}\sim 2^{2k},\;|\tau+|\xi|^{2}|\gg\varepsilon 2^{2k}\}} d\xi d\tau,\,\,j=1,2,3.
\end{align*}

For the term $u_{3}^{1}$, this corresponds to the case $\varepsilon=0$ which has been proved in \cite[Lemma 4.1]{IonK}. For the term $u_{3}^{2}$, we separate $\bar{\xi_{i}}$ and $\xi_{i}$ as much as possible,
\begin{align}
\begin{split}
u_{3}^{2}&=\sum_{k=1}^{\infty}(-i\varepsilon)^{k}\int_{\bar{\xi_{i}}\times\tau}e^{it\tau}e^{i\bar{x_{i}}\bar{\xi}_{i}}(2\lambda)^{-k-1}
1_{-\tau-|\bar{\xi}|^{2}\sim 2^{2k}}\\
&\quad\int_{\xi_{i}}\frac{e^{ix_{i}\xi_{i}}|\xi|^{2k}1_{|\tau+|\xi|^{2}|\gg\varepsilon2^{2k}}}{(\xi_{i}-\lambda)^{k+1}}
\hat{J}(\xi_{i},\bar{\xi}_{i},\tau)d\xi_{i}d\bar{\xi_{i}}d\tau.\label{3.17}
\end{split}
\end{align}
Let
\begin{align*}
K(y_{i},\bar{\xi_{i}},\tau)=\mathscr{F}^{-1}_{\xi_{i}}\bigg\{|\xi|^{2k}\hat{J}(\xi_{i},\bar{\xi}_{i},\tau)
1_{|\lambda+\xi_{i}|\sim2^{k}}\bigg\},
\end{align*}
thus the integral in \eqref{3.17} with respect to $\xi_{i}$ can be simplified to
\begin{align*}
&\int_{\xi_{i}}\frac{e^{ix_{i}\xi_{i}}1_{|\lambda-\xi_{i}|\gg\varepsilon2^{k}}}{(\xi_{i}-\lambda)^{k+1}}
\mathscr{F}_{y_{i}}\{K(y_{i},\bar{\xi}_{i},\tau)\}d\xi_{i}\\
&=\int_{y_{i}}K(y_{i},\bar{\xi}_{i},\tau)\int_{\xi_{i}}\frac{e^{i(x_{i}-y_{i})\xi_{i}}
1_{|\lambda-\xi_{i}|\gg\varepsilon2^{k}}}{(\xi_{i}-\lambda)^{k+1}}d\xi_{i}dy_{i}\\
&\lesssim \int_{y_{i}}\frac{1}{k}\varepsilon^{-k}2^{-k^{2}}e^{ix_{i}\lambda}K(y_{i},\bar{\xi}_{i},\tau)dy_{i},
\end{align*}
which yields that
\begin{align*}
u_{3}^{2}&\lesssim\sum_{k=1}^{\infty}(-1)^{k}i^{k}\frac{1}{k}\int_{\bar{\xi_{i}}\times\tau}e^{it\tau}e^{i\bar{x_{i}}\bar{\xi_{i}}}
(2\lambda)^{-k-1}1_{-\tau-|\bar{\xi}|^{2}\sim 2^{2k}} \int_{y_{i}}e^{ix_{i}\lambda}K(y_{i},\bar{\xi_{i}},\tau)dy_{i}d\bar{\xi_{i}}d\tau \\
&=:{\Gamma}_{3}^{2}.
\end{align*}
By the Plancherel equality, Bochner's equality and Lemma \ref{semi-stri}, we obtain
\begin{align}
\begin{split}
\|P_{k}u_{3}^{2}\|_{L^{2,\infty}_{\mathbf{e}_{i}}}
&=\left\|\mathscr{F}^{-1}_{\xi}\bigg\{\chi_{k}(\xi)e^{it|\xi|^{2}}\mathscr{F}_{x}\big\{u_{3}^{2}\cdot e^{-it|\xi|^{2}}\big\}\bigg\}\right\|_{L^{2,\infty}_{\mathbf{e}_{i}}} \\
&\lesssim \|e^{it|\xi|^{2}}\|_{L^{\infty}_{t,\xi}}\|e^{it\Delta}P_{k}\Gamma_{3}^{2}\|_{L^{2,\infty}_{\mathbf{e}_{i}}}
\lesssim 2^{(n-1)k/2}\|\Gamma_{3}^{2}\|_{L^{2}_{t,x}} \\
&\lesssim 2^{(n-1)k/2}2^{-2k^{2}-2k}\left\|1_{-\tau-|\bar{\xi}|^{2}\sim 2^{2k}}\int_{y_{i}}e^{ix_{1}\lambda}
K(y_{i},\bar{\xi_{i}},\tau)dy_{i}\right\|_{L^{2}_{x_{i}}L^{2}_{\bar{\xi_{i}},\tau}} \\
&\lesssim 2^{(n-1)k/2}2^{-2k^{2}-\frac{3}{2}k}|\xi|^{2k}\|J(y_{i},\bar{\xi_{i}},\tau)\|_{L^{1}_{y_{i}}L^{2}_{\bar{\xi_{i}},\tau}} \\
&\lesssim 2^{(n-2)k/2}\|J(u(t,x))\|_{L^{1,2}_{\mathbf{e}_{i}}}.\label{3.19}% 这里放出来是2^{(n-4)k/2}，这里k\geq1,所以就可以再放大一步.
\end{split}
\end{align}
For the term $u_{3}^{3}$, we use the mean value theorem of $\varphi_{3}(\tau,\xi)$ to get
\begin{align*}
|\varphi_{3}(\tau,\xi)|
\lesssim \sum_{k=1}^{\infty} \frac{\varepsilon^{k}|\xi|^{2k}(k+1)}{2^{k+1}\lambda^{k+1}|\xi_{i}-\lambda|^{k+1}}
\left(1+\frac{\lambda-\xi_{i}}{\lambda+\xi_{i}}\theta\right)^{k}\left|\frac{\lambda-\xi_{i}}{\lambda+\xi_{i}}\right|
\lesssim \sum_{k=1}^{\infty}\frac{k+1}{2^{2k}},\,\,\theta\in(0,1).
\end{align*}
The convergence is obvious, then it can be proved by using the similar method as the term $u_{1}$.
\end{proof}

\begin{lemma}[$X^{0,b,q}$ space]\label{X-linear}
Assume that $n\geq3$, $u(t,x)$ is the solution of \eqref{deri-Ginz}, $J(u(t,x))$ is the nonlinear term, for any $\varepsilon>0$,
\begin{align*}
u_{t}-(\varepsilon+i)\Delta u=J(u(t,x)),\quad u(0,x)=u_{0}.
\end{align*}
Then we have
\begin{align}
&\|P_{k}u\|_{X^{0,\frac{1}{2},\infty}_{+}}
\leq C\big(\|u_{0}\|_{L^{2}_{x}}+\|P_{k}J(u(t,x))\|_{X^{0,-\frac{1}{2},\infty}_{+}}\big),\label{3.14}\\
&\|P_{k}u\|_{Z_{k}}\leq C\big(  \varepsilon^{\frac{1}{2}}\|u_{0}\|_{L^{2}_{x}}+2^{-k}\|J(u(t,x))\|_{L^{2}_{t,x}}\big),\label{3.15}
\end{align}
where the constant $C>0$ is independent of $\varepsilon$ and $k$.
\end{lemma}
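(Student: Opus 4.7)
The plan is to follow the Duhamel-plus-spacetime-Fourier strategy already used in Lemmas \ref{Leb-linear} and \ref{anisLeg-linear}. Writing $u=e^{(\varepsilon+i)t\Delta}u_0+\int_0^t e^{(\varepsilon+i)(t-s)\Delta}J(s)\,ds$, both \eqref{3.14} and \eqref{3.15} split into a free-evolution piece and a Duhamel piece, and the analysis is driven by the multiplier $m(\tau,\xi):=[i(\tau+|\xi|^2)+\varepsilon|\xi|^2]^{-1}$, which enjoys the uniform pointwise bound $|m(\tau,\xi)|\lesssim[\max(|\tau+|\xi|^2|,\varepsilon|\xi|^2)]^{-1}$ on the spatial frequency shell $|\xi|\sim 2^k$.

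For the Duhamel piece I would reuse the extension device from \eqref{3.2}: pick an extension $\tilde J$ of $J$ to $\mathbb R\times\mathbb R^n$ (with $X^{0,-1/2,\infty}$-comparable norm for \eqref{3.14}, or the zero extension preserving $L^2$ for \eqref{3.15}), and set $\tilde u:=\mathscr F_{\tau,\xi}^{-1}\{m\,\mathscr F_{t,x}\tilde J\}$. The pointwise bound on $m$ then delivers
\begin{equation*}
\|Q_j P_k\tilde u\|_{L^2_{t,x}}\le\frac{1}{\max(2^j,\varepsilon 2^{2k})}\,\|Q_j P_k\tilde J\|_{L^2_{t,x}}.
\end{equation*}
For \eqref{3.14}, multiplying by $2^{j/2}$ and taking the supremum in $j$ bounds the left-hand side by $\sup_j 2^{-j/2}\|Q_jP_k\tilde J\|_{L^2}\le\|P_k J\|_{X^{0,-1/2,\infty}_+}$. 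For the Duhamel share of \eqref{3.15}, squaring and summing in $j$ shows $2^{2j}\|Q_jP_k\tilde u\|^2\le\|Q_jP_k\tilde J\|^2$ uniformly in both regimes $2^j\le\varepsilon 2^{2k}$ and $2^j>\varepsilon 2^{2k}$, and Plancherel then gives $\|P_k\tilde u\|_{X^{0,1}}\lesssim\|P_k J\|_{L^2_{t,x}}$, i.e.\ exactly the $2^{-k}\|J\|_{L^2}$ term after $Z_k$-weighting.

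For the free evolution I would use a smooth compactly supported time cutoff $\psi$ and analyse $\psi(t)e^{(\varepsilon+i)t\Delta}u_0$, whose space-time Fourier transform factorises as $\hat u_0(\xi)\,\Phi_\xi(\tau+|\xi|^2)$ with $\Phi_\xi:=\mathscr F_t\{\psi(t)e^{-\varepsilon t|\xi|^2}\}$. Since the heat factor $e^{-\varepsilon t|\xi|^2}$ effectively shrinks the support of $\psi e^{-\varepsilon t|\xi|^2}$ to size $\min(1,(\varepsilon|\xi|^2)^{-1})$, a routine integration-by-parts yields the pointwise bound $|\Phi_\xi(\mu)|\lesssim[\max(|\mu|,\varepsilon|\xi|^2)]^{-1}$ together with the quadratic tail decay $|\Phi_\xi(\mu)|\lesssim\varepsilon|\xi|^2/|\mu|^2$ for $|\mu|\gg\varepsilon|\xi|^2$. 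Integrating over $\{|\tau+|\xi|^2|\sim 2^j,\,|\xi|\sim 2^k\}$ produces
\begin{equation*}
\|Q_j P_k(\psi e^{(\varepsilon+i)t\Delta}u_0)\|_{L^2_{t,x}}^2\;\lesssim\;\frac{2^j}{\max(2^j,\varepsilon 2^{2k})^2}\,\|P_ku_0\|_{L^2}^2.
\end{equation*}
Multiplying by $2^{j/2}$ and taking the supremum in $j$ gives $\|P_ku_0\|_{L^2}$, finishing the free-evolution half of \eqref{3.14}. For \eqref{3.15}, the sum $\sum_j 2^{2j}\|Q_jP_ku_{\mathrm{hom}}\|_{L^2}^2$ is a geometric sum over both regimes of total size $\varepsilon 2^{2k}\|P_ku_0\|_{L^2}^2$, whence $\|P_ku_{\mathrm{hom}}\|_{X^{0,1}}\lesssim\varepsilon^{1/2}2^k\|P_ku_0\|_{L^2}$ and hence $\|P_ku_{\mathrm{hom}}\|_{Z_k}\lesssim\varepsilon^{1/2}\|P_ku_0\|_{L^2}$.

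The principal technical obstacle I expect is justifying the quadratic tail decay $|\Phi_\xi(\mu)|\lesssim\varepsilon|\xi|^2/|\mu|^2$ in the regime $|\mu|\gg\varepsilon|\xi|^2$, which is precisely what forces the $\ell^2$ sum defining the $X^{0,1}$-norm to converge and delivers the crucial $\varepsilon^{1/2}$ gain. This should come from an integration-by-parts argument using that the $N$-th $t$-derivative of $\psi(t)e^{-\varepsilon t|\xi|^2}$ has $L^1_t$-norm of order $(\varepsilon|\xi|^2)^{N-1}$ in the dissipation-dominated regime $\varepsilon|\xi|^2\gg 1$.
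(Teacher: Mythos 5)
Your Duhamel halves are sound and essentially reproduce the paper's argument: the multiplier bound $|[i(\tau+|\xi|^2)+\varepsilon|\xi|^2]^{-1}|\lesssim\max(2^j,\varepsilon 2^{2k})^{-1}$ on the support of $Q_jP_k$ gives \eqref{3.14} exactly as in the paper's estimate \eqref{3.2}, and your direct $\ell^2_j$ summation for the forcing part of \eqref{3.15} is a legitimate (in fact slightly cleaner) alternative to the paper's route, which instead uses the identity $(i\partial_t+\Delta)u=i\varepsilon\Delta u+iJ$ to write $\|P_ku\|_{X^{0,1}}\lesssim\varepsilon 2^{2k}\|P_ku\|_{L^2_{t,x}}+\|P_kJ\|_{L^2_{t,x}}$ and then beats down the first term through Duhamel and the $L^2_t$-decay $\|e^{-\varepsilon t2^{2k}}\|_{L^2_t}\sim(\varepsilon 2^{2k})^{-1/2}$, which is where the $\varepsilon^{1/2}$ comes from.

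The genuine gap is in your free-evolution piece, precisely at the point you flagged. A fixed smooth, compactly supported cutoff $\psi$ cannot produce the bounds you need uniformly in $\varepsilon$ and $k$: if $\psi$ is to agree with $1$ near $t=0$ (so that $\psi(t)e^{(\varepsilon+i)t\Delta}u_0$ is an admissible extension), its support must reach into $t<0$, where the factor $e^{-\varepsilon t|\xi|^2}=e^{\varepsilon|t||\xi|^2}$ grows like $e^{\delta\varepsilon 2^{2k}}$; in the dissipation-dominated regime $\varepsilon 2^{2k}\gg1$ this destroys the claimed $L^1_t$ bounds on $\partial_t^N(\psi e^{-\varepsilon t|\xi|^2})$ and hence both the pointwise bound and the quadratic tail for $\Phi_\xi$. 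If instead you take a one-sided (sharp) truncation at $t=0$, the jump gives only $|\Phi_\xi(\mu)|\sim|\mu|^{-1}$ (indeed the exact transform of $1_{t\ge0}e^{-(\varepsilon+i)t|\xi|^2}$ is $[\varepsilon|\xi|^2+i(\tau+|\xi|^2)]^{-1}$), and then the sum $\sum_{2^j\gtrsim\varepsilon2^{2k}}2^{2j}\|Q_jP_ku_{\mathrm{hom}}\|^2_{L^2}$ diverges, so the $\varepsilon^{1/2}$ gain in \eqref{3.15} is lost. The paper's device repairs exactly this: extend the free flow to negative times by the reflected dissipative factor, $\tilde u=e^{it\Delta+\varepsilon|t|\Delta}u_0$, whose time Fourier transform is the explicit Lorentzian $\varepsilon|\xi|^2\big[(\varepsilon|\xi|^2)^2+(\tau+|\xi|^2)^2\big]^{-1}$ (up to constants); this has both your pointwise bound and the quadratic tail $\varepsilon|\xi|^2/|\tau+|\xi|^2|^2$ exactly, no cutoff needed, after which your dyadic computations for \eqref{3.14} and \eqref{3.15} go through verbatim. (An $(\varepsilon,k)$-adapted cutoff of width $\min(1,(\varepsilon2^{2k})^{-1})$ on the negative side would also work, but a fixed $\psi$ as proposed does not.)
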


\begin{proof}
First, we show the second inequality \eqref{3.15}. By the definition of the norm $Z_{k}$ and the fact that $|\xi|\sim 2^{k}$, one has
\begin{align*}
\|P_{k}u\|_{Z_{k}}
&=2^{-k}\left\|-\mathscr{F}^{-1}_{\xi}\{\varepsilon|\xi|^{2}\chi_{k}(\xi)\hat{u}(\xi)\}
+\mathscr{F}^{-1}_{\xi}\{\chi_{k}(\xi)\hat{J}(u(s,\xi))e^{-(i+\varepsilon)|\xi|^{2}(t-s)}\}\right\|_{L^{2}_{t,x}}\\
&\lesssim \varepsilon 2^{k}\|P_{k}u\|_{L^{2}_{t,x}}+2^{-k}\|P_{k}J(u(t,x))\|_{L^{2}_{t,x}}.
\end{align*}
We estimate the first term by the plancherel equality and H\"{o}lder's inequality:
\begin{align*}
\varepsilon 2^{k}\|P_{k}u\|_{L^{2}_{t,x}}
&\lesssim \varepsilon 2^{k}\|P_{k}e^{(i+\varepsilon)t\Delta}u_{0}\|_{L^{2}_{t,x}}
+\varepsilon 2^{k} \left\|P_{k}\int_{0}^{t}e^{(i+\varepsilon)(t-s)\Delta}J(u(s))ds\right\|_{L^{2}_{t,x}}\\
&\lesssim \varepsilon 2^{k}\left[\int_{0}^{\infty}\|e^{-(i+\varepsilon)t|\xi|^{2}}\|^{2}_{L^{\infty}_{\xi}}dt\right]^{\frac{1}{2}}
\|P_{k}u_{0}\|_{L^{\infty}_{x}}\\
&\quad+\varepsilon 2^{k}\left\|\mathscr{F}^{-1}_{\xi}\big\{\chi_{k}(\xi)\sup_{s\in[0,t]}|\hat{J}(s,\xi)|\big\}
\int_{0}^{t}e^{-(i+\varepsilon)(t-s)|\xi|^{2}}ds\right\|_{L^{2}_{t,x}}\\
&\lesssim \varepsilon 2^{k}\varepsilon^{-\frac{1}{2}}|\xi|^{-1}\|P_{k}u_{0}\|_{L^{2}_{x}}
+\varepsilon 2^{k}\varepsilon^{-1}|\xi|^{-2}\|P_{k}J(u(t,x))\|_{L^{2}_{t,x}}\\
&\lesssim \varepsilon^{\frac{1}{2}}\|u_{0}\|_{L^{2}_{x}}+2^{-k}\|J(u(t,x))\|_{L^{2}_{t,x}},
\end{align*}
thus we get \eqref{3.15}.

Next, we want to show \eqref{3.14}. Assume that $J(u(t,x))=0$, since $X^{0,b,q}$ space contains the Fourier transform for time, we extend $\tilde{u}=e^{it\Delta+\varepsilon|t|\Delta}u_{0}$. By the Plancherel equality, we obtain
\begin{align*}
\|P_{k}u\|_{X^{0,\frac{1}{2},\infty}_{+}}
&=\sup_{j\in \mathbb{Z}}2^{\frac{j}{2}}\left\|\chi_{j}(\tau+|\xi|^{2})\chi_{k}(\xi)\hat{u}_{0}(\xi)
\mathscr{F}_{t}\big\{e^{-it|\xi|^{2}-\varepsilon|t||\xi|^{2}}\big\}\right\|_{L^{2}_{\tau,\xi}}\\
&\lesssim \sup_{j\in \mathbb{Z}}2^{\frac{j}{2}}\left\|\chi_{k}(\xi)\hat{u}_{0}(\xi)\left\|\chi_{j}(\tau+|\xi|^{2})
\frac{\varepsilon|\xi|^{2}}{(\varepsilon|\xi|^{2})^{2}+(\tau+|\xi|^{2})^{2}}\right\|_{L^{2}_{\tau}}\right\|_{L^{2}_{\xi}}\\
&\lesssim \|\chi_{k}(\xi)\hat{u}_{0}(\xi)\|_{L^{2}_{\xi}}
=\|u_{0}\|_{L^{2}_{x}}.
\end{align*}

We assume that $u_{0}=0$ and we define $\tilde{u}=\mathscr{F}^{-1}_{\tau,\xi}\frac{1}{\tau+|\xi|^{2}+i\varepsilon|\xi|^{2}}\mathscr{F}_{t,x}\tilde{J}(u(t,x))$,
where the term $\tilde{J}(u(t,x))$ is the extension of the term $J(u(t,x))$. Let $\tilde{J}(u(t,x))=J(u(t,x))1_{t\geq0}+J(u(t,x))1_{t<0}$ such that
\begin{align*}
\|P_{k}u\|_{X^{0,\frac{1}{2},\infty}_{+}}
&\lesssim \|P_{k}\tilde{u}\|_{X^{0,-\frac{1}{2},\infty}}.
\end{align*}
Similar to the argument of \eqref{3.2}, we get \eqref{3.14}. The proof of Lemma \ref{X-linear} is completed.
\end{proof}

\begin{proposition}[Linear estimate]\label{pro linear-es}%线性估计结论
Assume that $n\geq3$, $u(t,x)$ is the solution of \eqref{deri-Ginz}, $J(u(t,x))$ is the nonlinear term, for any $\varepsilon>0$,
\begin{align*}
u_{t}-(\varepsilon+ai)\Delta u=J(u(t,x)),\quad u(x,0)=u_{0}.
\end{align*}
then we have
\begin{align}
\|u(t,x)\|_{F^{\frac{n}{2}}\cap Z^{\frac{n}{2}}}\leq C\big(\|u_{0}\|_{\dot{B}^{\frac{n}{2}}_{2,1}}+\|J(x,t)\|_{N^{\frac{n}{2}}}\big),\label{3.1}
\end{align}
where the constant $C>0$ is independent of $\varepsilon$ and $k$.
\end{proposition}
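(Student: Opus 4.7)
The plan is to bound each of the five dyadic norms that constitute $\|P_k u\|_{F_k}$ together with $\|P_k u\|_{Z_k}$ separately, match each resulting right-hand side to the appropriate piece of $\|P_k J\|_{N_k}$ plus $\|P_k u_0\|_{L^2_x}$, then multiply by $2^{kn/2}$ and sum in $k \in \mathbb{Z}$. Applying $P_k$ to the equation commutes with the semigroup, so for each fixed $k$ the function $P_k u$ solves the same linear problem with data $P_k u_0$ and forcing $P_k J$, which lets me invoke the dyadic estimates of Lemmas \ref{Leb-linear}, \ref{anisLeg-linear} and \ref{X-linear} directly.

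Concretely, I would proceed as follows. For the $L^{\infty}_t L^2_x$ and $L^2_t L^{2n/(n-2)}_x$ components of $F_k$, I cite Lemma \ref{Leb-linear} and note that the output is already controlled by $\|P_k u_0\|_{L^2_x} + \|P_k J\|_{N_k}$. For the anisotropic $L^{2,\infty}_{\mathbf{e}}$ piece, Lemma \ref{anisLeg-linear} \eqref{3.12} produces $2^{(n-1)k/2}\|P_k u_0\|_{L^2_x} + 2^{(n-2)k/2}\sup_{\mathbf{e}}\|P_k J\|_{L^{1,2}_{\mathbf{e}}}$, and the dyadic weight $2^{-(n-1)k/2}$ built into the definition of $F_k$ precisely cancels to leave $\|P_k u_0\|_{L^2_x} + 2^{-k/2}\sup_{\mathbf{e}}\|P_k J\|_{L^{1,2}_{\mathbf{e}}}$, which is again bounded by $\|P_k u_0\|_{L^2_x} + \|P_k J\|_{N_k}$. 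The $L^{\infty,2}_{\mathbf{e}}$ piece is handled identically via Lemma \ref{anisLeg-linear} \eqref{3.13}, the weight $2^{k/2}$ absorbing the factor $2^{-k/2}$. The $X^{0,1/2,1}_+$ contribution is controlled by Lemma \ref{X-linear} \eqref{3.14} (applied on each modulation shell in the usual way so the $\infty$-summation of the lemma upgrades to the $1$-summation required by $F_k$), while the $Z_k$ norm is precisely the content of Lemma \ref{X-linear} \eqref{3.15}, where the factor $\varepsilon^{1/2}$ in front of $\|P_k u_0\|_{L^2_x}$ is harmless since $\varepsilon \in (0,1)$, and $2^{-k}\|P_k J\|_{L^2_{t,x}}$ is a component of $\|P_k J\|_{N_k}$.

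The final step is to multiply the dyadic inequality $\|P_k u\|_{F_k} + \|P_k u\|_{Z_k} \leq C\bigl(\|P_k u_0\|_{L^2_x} + \|P_k J\|_{N_k}\bigr)$ by $2^{kn/2}$ and sum over $k$, which converts the left-hand side into $\|u\|_{F^{n/2} \cap Z^{n/2}}$, the first term on the right into $\|u_0\|_{\dot B^{n/2}_{2,1}}$ by definition of the homogeneous Besov norm, and the second into $\|J\|_{N^{n/2}}$. The main technical obstacle is bookkeeping rather than any new analytic input: one has to verify that every term appearing on the right of the three lemmas fits inside some piece of $\|P_k J\|_{N_k}$ after the correct dyadic weight is applied, and, more delicately, that the $X^{0,1/2,\infty}_+$ estimate of Lemma \ref{X-linear} can be upgraded to the $X^{0,1/2,1}_+$ norm demanded by $F_k$ uniformly in $\varepsilon$. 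All $\varepsilon$-dependent constants coming from the three lemmas are uniform in $\varepsilon \in (0,1)$, so the bound \eqref{3.1} holds with a constant $C$ independent of $\varepsilon$ and $k$, as claimed.
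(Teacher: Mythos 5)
Your proposal is correct and follows essentially the same route as the paper, whose proof of Proposition \ref{pro linear-es} is literally the one-line statement that Lemmas \ref{Leb-linear}--\ref{X-linear} combine, dyadic block by dyadic block, to give \eqref{3.1}; your bookkeeping of the weights $2^{-(n-1)k/2}$, $2^{k/2}$, $2^{-k}$ against the pieces of $\|P_k J\|_{N_k}$ is exactly what that sentence leaves implicit. You are in fact more careful than the paper in flagging the upgrade from the $X^{0,\frac{1}{2},\infty}_{+}$ bound of \eqref{3.14} to the $\ell^1$-in-modulation norm $X^{0,\frac{1}{2},1}_{+}$ required by $F_k$ (for the Duhamel part this is already the content of \eqref{3.2}, and for the free evolution the $\ell^1$ sum converges by the same kernel computation as in Lemma \ref{X-linear}), a point the paper's one-line proof glosses over entirely.
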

\begin{proof}
Combining Lemmas \ref{Leb-linear}--\ref{X-linear}, we can get the desired result of Proposition \ref{pro linear-es}.
\end{proof}
\section{The nonlinear estimate }\label{Sec4}
In this section, we establish some nonlinear estimates, where the nonlinear parts of the complex derivative Ginzburg-Landau equation are given by
\begin{align*}
J(u(t,x))&=-(1+i)(v\cdot\nabla)u-\frac{2(\varepsilon+i)\bar{u}(\nabla u)^{2}}{1+|u|^{2}}
-\frac{{\rm{Im}}F-i{\rm{Re}}H}{1+|u|^{2}}\\
&=:J_{1}(u(t,x))+J_{2}(u(t,x))+J_{3}(u(t,x)).
\end{align*}
and
\begin{align*}
F(u,\bar{u}):=\frac{2(1+\bar{u}^{2})(1-|u|^2)(v\cdot\nabla)u}{1+|u|^{2}},\quad
H(u,\bar{u}):=\frac{4(v\cdot\nabla)u(\bar{u}^{2}+|u|^{2})}{1+|u|^{2}}.
\end{align*}

\begin{lemma}[\cite{Guo}, Lemma 5.4]\label{Qjk-bdd}
$(1)$\;Assume that $\Omega$ is a Banach space with translation invariant for time-space. If $j,k\in\mathbb{Z}$ and $j\geq2k-100$, then $Q_{\leq j}P_{k}$ is bounded on $\Omega$ with bound independent of $j,k$.

$(2)$\;For any $j,k\in\mathbb{Z}$ and $1\leq p\leq\infty$, then $Q_{\leq j}P_{k,\mathbf{e}}$ is bounded on $L^{p,2}_{\mathbf{e}}$ with bound independent of $j,k$.

$(3)$\;For any $j\in\mathbb{Z}$ and $1\leq p\leq\infty$, then $Q_{\leq j}$ is bounded on $L^{p}_{t}L^{2}_{x}$ with bound independent of $j$.
\end{lemma}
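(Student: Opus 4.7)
All three parts concern Fourier multipliers whose symbols involve $\chi_{\leq j}(\tau+|\xi|^2)$, and the common device is the substitution $\sigma = \tau + |\xi|^2$ in the inverse Fourier transform, which decouples the modulation cutoff from the spatial cutoff and factorizes the space-time kernel as $K(t,x) = \psi_j(t)\cdot G(t,x)$. Here $\psi_j(t)=2^j\psi_0(2^jt)$ is a rescaled Schwartz function with $\|\psi_j\|_{L^1_t} = \|\psi_0\|_{L^1}$ uniformly in $j$, and $G(t,\cdot)$ is the free Schr\"{o}dinger evolution of a (frequency-localized) profile.

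For part (3), the factorization produces $Q_{\leq j}u(t,\cdot)=\int \psi_j(t-s)\,e^{i(t-s)\Delta}u(s,\cdot)\,ds$. Since $e^{i\tau\Delta}$ is an isometry on $L^2_x$, Minkowski yields the pointwise-in-$t$ bound $\|Q_{\leq j}u(t,\cdot)\|_{L^2_x}\leq (|\psi_j|\ast_t\|u(\cdot,\cdot)\|_{L^2_x})(t)$, and Young's inequality in $t$ completes the estimate uniformly in $j$.

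For part (1), the symbol $\chi_{\leq j}(\tau+|\xi|^2)\chi_k(\xi)$ factorizes into $\psi_j(t)\cdot\Psi_k(t,x)$ where $\Psi_k(t,\cdot)=e^{it\Delta}(\mathscr{F}^{-1}\chi_k)$. Rescaling $\xi=2^k\xi'$ gives $\Psi_k(t,x)=2^{kn}\Psi_0(2^{2k}t,2^k x)$ with $\Psi_0$ the Schr\"{o}dinger evolution of a frequency-$1$ profile, and the standard dispersive/stationary-phase bound yields $\|\Psi_k(t,\cdot)\|_{L^1_x}\lesssim \langle 2^{2k}t\rangle^{n/2}$. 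Therefore $\|K_{j,k}\|_{L^1_{t,x}}\lesssim \int|\psi_0(s)|\langle 2^{2k-j}s\rangle^{n/2}\,ds$, which is finite and uniform in $(j,k)$ exactly when $2^{2k-j}\lesssim 1$, i.e.\ when $j\geq 2k-100$. Since $\Omega$ is translation-invariant, $\|K_{j,k}\ast u\|_\Omega\leq\|K_{j,k}\|_{L^1_{t,x}}\|u\|_\Omega$ by Minkowski, closing the argument.

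The main obstacle is part (2), where no relation between $j$ and $k$ is assumed and the naive $L^1_{t,x}$-kernel bound above grows like $2^{(2k-j)/2}$ when $j<2k$. To bypass this, I would take Plancherel in $(\bar{x},t)$ and identify $\|Q_{\leq j}P_{k,\mathbf{e}}u\|_{L^{p,2}_\mathbf{e}}$ with $\|T(\mathscr{F}_{\bar{x},t}u)\|_{L^p_{x_1}L^2_{\bar{\xi},\tau}}$, where for each frozen $(\bar{\xi},\tau)$ the operator $T$ is a one-dimensional convolution in $x_1$ with kernel $k_{\bar{\xi},\tau}(z_1)=\int e^{iz_1\xi_1}\chi_{\leq j}(\tau+\xi_1^2+|\bar{\xi}|^2)\chi_k(\xi_1)\,d\xi_1$. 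A case analysis on $\alpha=\tau+|\bar{\xi}|^2$ versus $2^{2k}$ shows that the symbol in $\xi_1$ is a smooth bump of scale $L=\min(2^k,2^{j-k})$, whose $x_1$-kernel has amplitude $L$ and envelope width $L^{-1}$; crucially, the $(\bar{\xi},\tau)$-dependence enters only through a phase shift (from the centres $\pm\sqrt{-\alpha}$), so the envelope is $(\bar{\xi},\tau)$-uniform and $\int \sup_{\bar{\xi},\tau}|k_{\bar{\xi},\tau}(z_1)|\,dz_1\lesssim 1$ independently of $j,k$. Combining this sup-kernel bound with Minkowski's integral inequality handles the endpoints $p=1$ and $p=\infty$, and Riesz--Thorin interpolation then covers $1<p<\infty$. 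The delicate point is precisely the separation of $k_{\bar{\xi},\tau}$ into phase (absorbed by the supremum) and envelope (of uniformly bounded $L^1_{z_1}$ mass), which is what the plain $L^1_{z_1}$-bound on each individual kernel would not give.
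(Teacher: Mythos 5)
The paper does not actually prove this lemma: it is quoted from \cite{Guo}, Lemma~5.4 (the argument there going back to Ionescu--Kenig and Bejenaru--Ionescu--Kenig--Tataru), so there is no in-paper proof to compare against. Your argument is correct and is essentially the standard one from that source: part (3) via the identity $Q_{\leq j}u(t)=\int\psi_j(t-s)e^{i(t-s)\Delta}u(s)\,ds$ together with the $L^2_x$-isometry of $e^{i\tau\Delta}$ and Young's inequality in $t$; part (1) via the uniform $L^1_{t,x}$ bound on the kernel $\psi_j(t)\,[e^{it\Delta}\mathscr{F}^{-1}\chi_k](x)$, where the dispersive estimate $\|e^{it\Delta}\mathscr{F}^{-1}\chi_k\|_{L^1_x}\lesssim\langle 2^{2k}t\rangle^{n/2}$ is precisely where the hypothesis $j\geq 2k-100$ is consumed; part (2) via the frozen-$(\bar\xi,\tau)$ one-dimensional kernel at scale $L=\min(2^k,2^{j-k})$. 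Two small refinements are worth recording. First, in part (2) the phrase that the $(\bar\xi,\tau)$-dependence enters ``only through a phase shift'' is slightly too strong: the profile $\xi_1\mapsto\chi_{\leq j}(\alpha+\xi_1^2)\chi_k(\xi_1)$, $\alpha=\tau+|\bar\xi|^2$, is not a translate of a single fixed bump; what you actually need, and what does hold, is the $\alpha$-uniform derivative bound $|\partial_{\xi_1}^m m|\lesssim_m L^{-m}$ on a support consisting of at most two intervals of length $\lesssim L$ centred at $\pm\sqrt{-\alpha}$ (empty unless $-\alpha\sim 2^{2k}$ when $j<2k$), which gives the uniform majorant $|k_{\bar\xi,\tau}(z_1)|\lesssim_N L\langle Lz_1\rangle^{-N}$ and hence $\|\sup_{\bar\xi,\tau}|k_{\bar\xi,\tau}|\|_{L^1_{z_1}}\lesssim 1$. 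Second, the Riesz--Thorin step is superfluous: once the pointwise majorant is in $L^1_{z_1}$, Minkowski's integral inequality in $L^2_{\bar\xi,\tau}$ followed by Young's convolution inequality in $x_1$ covers all $1\leq p\leq\infty$ simultaneously; likewise in part (1) one should state explicitly that ``translation invariant'' is used in the form $\|K\ast u\|_{\Omega}\leq\|K\|_{L^1_{t,x}}\|u\|_{\Omega}$. With these clarifications your proof is complete.
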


We need to estimate the nonlinear term $J(u(t,x))$ in the space $N^{n/2}$, by dividing the norm of $N^{n/2}$ space into two parts, i.e.,
the norm of $L^{2}_{t,x}$ and the norm of piecewise function denoted by $N^{*}_{k}$.
Notice that the main structure of the nonlinear parts are $u^{2}(\nabla u)$ and $u(\nabla u)^{2}$, then we establish $L^{2}_{t,x}$- and $N^{*}_{k}$-estimates of both terms, respectively.
\begin{lemma}\label{Lemma4.3}%非线性部分u^2(\nabla u)的Lebesgue 范数估计
Assume that $n\geq3$, for any $k_{j}\in\mathbb{Z},\;j=1,2,3$, we have
\begin{align}
\sum_{k_{j}}2^{k_{3}(n-2)/2}\left\|P_{k_{3}}\left[\sum_{i=1}^{n}v_{i}\partial_{x_{i}}w(P_{k_{1}}fP_{k_{2}}g)\right]\right\|
_{L^{2}_{t,x}}\leq C\|v\|_{L^{\infty}_{t,x}}\|w\|_{Y^{n/2}} \|f\|_{F^{n/2}} \|g\|_{F^{n/2}}.\label{4.3}
\end{align}
where the constant $C>0$ is independent of $\varepsilon$ and $k_{j}$.
\end{lemma}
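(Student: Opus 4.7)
The plan is to further Littlewood--Paley decompose the factor $w = \sum_{k_0 \in \mathbb{Z}} P_{k_0} w$, so that the left-hand side of \eqref{4.3} becomes a quadruple sum indexed by $(k_0, k_1, k_2, k_3)$. Since the Fourier support of a product is the Minkowski sum of the individual supports, the outer projector $P_{k_3}$ forces $k_3 \leq \max(k_0, k_1, k_2) + O(1)$, and in the strict ``high--low output'' regime the two largest of $k_0, k_1, k_2$ must themselves be comparable up to $O(1)$. I would split the triple sum into subcases according to which of $k_0, k_1, k_2$ is largest, using the $k_1 \leftrightarrow k_2$ symmetry to reduce to two essentially different cases, namely $k_0$ maximal (derivative on the high--frequency factor) and $k_1$ maximal (derivative on a low--frequency factor).

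In each subcase I pull out $\|v\|_{L^\infty_{t,x}}$ and estimate the remaining trilinear form in $L^2_{t,x}$ via a H\"older chain tuned to the maximal index. The two useful pairings are the anisotropic $L^{2,\infty}_{\mathbf{e}_i} \cdot L^{\infty,2}_{\mathbf{e}_i} \subset L^2_{t,x}$ and the Strichartz-type $L^2_t L^{2n/(n-2)}_x \cdot L^\infty_t L^n_x \subset L^2_{t,x}$, both of which appear naturally because the participating norms are built into $F_k$ and $Y_k$. The derivative $\partial_{x_i}$ is absorbed by a factor of $2^{k_0}$ on $\partial_{x_i} P_{k_0} w$, which is placed either in the $L^{2,\infty}_{\mathbf{e}_i}$ slot of $Y_{k_0}$ (costing $2^{(n+1)k_0/2}\|P_{k_0} w\|_{Y_{k_0}}$) or in the $L^2_t L^{2n/(n-2)}_x$ slot of $Y_{k_0}$ (costing $2^{k_0}\|P_{k_0} w\|_{Y_{k_0}}$). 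For the remaining factors Bernstein gives $\|P_{k_j} h\|_{L^\infty_{t,x}} \lesssim 2^{k_j n/2}\|P_{k_j} h\|_{L^\infty_t L^2_x}$ and $\|P_{k_j} h\|_{L^\infty_t L^n_x} \lesssim 2^{k_j(n-2)/2}\|P_{k_j} h\|_{L^\infty_t L^2_x}$, while $\|P_{k_j} h\|_{L^{\infty,2}_{\mathbf{e}_i}}$ is treated via a secondary directional decomposition $P_{k_j} h = \sum_{j' \leq k_j + O(1)} P_{j', \mathbf{e}_i} P_{k_j} h$, with the near-diagonal pieces $|j' - k_j| \leq 20$ absorbed by the directional term in the definition of $F_{k_j}$ and the lower transverse frequencies handled by a further 1D Bernstein.

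After rewriting everything in terms of the Besov contributions $\alpha_{k_0} := 2^{k_0 n/2}\|P_{k_0} w\|_{Y_{k_0}}$, $\beta_{k_1} := 2^{k_1 n/2}\|P_{k_1} f\|_{F_{k_1}}$, $\gamma_{k_2} := 2^{k_2 n/2}\|P_{k_2} g\|_{F_{k_2}}$, the outer $k_3$-sum collapses against $2^{k_3(n-2)/2}$ to a single factor $2^{\max(k_0,k_1,k_2)(n-2)/2}$ (valid since $n \geq 3$). What remains is a triple sum of $\alpha_{k_0}\beta_{k_1}\gamma_{k_2}$ against a scale-difference kernel that, once the H\"older pairing has been matched to the maximal index in each subcase, decays off the diagonal $k_0 \sim k_1 \sim k_2$. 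Young's inequality for discrete convolutions then estimates it by the product $\|\alpha\|_{\ell^1}\|\beta\|_{\ell^1}\|\gamma\|_{\ell^1} = \|w\|_{Y^{n/2}}\|f\|_{F^{n/2}}\|g\|_{F^{n/2}}$, which is precisely the right-hand side of \eqref{4.3}.

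The main obstacle I expect is the high--high--high regime $k_0 \sim k_1 \sim k_2$ with $k_3$ possibly much smaller, where the dimensional balance is exactly critical and no H\"older loss can be afforded. This is the regime that forces the anisotropic refinements in the definitions of $F_k$ and $Y_k$: the directional $L^{\infty,2}_{\mathbf{e}_i}$ piece of $F_k$ and the $L^{2,\infty}_{\mathbf{e}_i}$ piece of $Y_k$ provide precisely the $2^{-k/2}$ and $2^{-(n-1)k/2}$ gains needed to give a strictly positive scale-difference decay rate in all three coordinates simultaneously; purely isotropic Strichartz estimates would leave a logarithmic divergence in the high--high--high summation, whereas the anisotropic pieces close it.
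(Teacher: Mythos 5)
Your plan is a more systematic route than the paper's own proof of this lemma (the paper only splits $w$ at the threshold $2^{k_1+k_2}$ and uses nothing but $L^\infty_tL^2_x$, $L^2_tL^{2n/(n-2)}_x$ and Bernstein here; anisotropic and modulation norms enter only in the companion Lemma \ref{Lemma4.4}), but the final summation step cannot work as you describe, and this is a genuine gap rather than a technicality. Track parabolic scaling on a single block with $k_0\sim k_1\sim k_2=k$ and $k_3\le k+O(1)$: every component of $F_k$ and $Y_k$, together with its weight $2^{kn/2}$, is invariant under $u\mapsto u(2^{2m}t,2^m x)$, $k\mapsto k+m$, while the left-hand block $2^{k_3(n-2)/2}\|P_{k_3}(v\,\partial_{x_i}P_{k_0}w\,P_{k_1}fP_{k_2}g)\|_{L^2_{t,x}}$ picks up an extra factor $2^{-m}$ (one derivative contributes $+1$, the weight $(n-2)/2$, and $L^2_{t,x}$ contributes $-(n+2)/2$), and $\|v\|_{L^\infty_{t,x}}$ is scale-neutral (take $v$ constant). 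Hence the sharpest block constant necessarily has the form $2^{-k}\phi(k-k_3)$: no choice of pairings, isotropic or anisotropic, yields a kernel depending only on scale differences. Indeed your own sample H\"older chains give $2^{(k_3-k)(n-1)}2^{-k}$ or $2^{(k_3-k)(n-2)/2}2^{-k}$, so after collapsing the $k_3$-sum you are left with $\sum_k 2^{-k}\alpha_k\beta_k\gamma_k$, which Young's inequality does not control by $\|\alpha\|_{\ell^1}\|\beta\|_{\ell^1}\|\gamma\|_{\ell^1}$. The divergence is exponential at the low-frequency end, not the logarithmic high--high--high issue you single out, and the anisotropic pieces of $F_k$, $Y_k$ cannot repair it, since with their weights they scale exactly like the isotropic pieces; they only redistribute powers between $k_3$ and the input frequencies.

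Put differently, the dangerous regime for this drift term is comparable input frequencies with $2^k\to 0$: rescaling one fixed profile $(w,f,g)$ down to frequency $2^k$ with constant $v$ makes the left side of \eqref{4.3} grow like $2^{-k}$ while the right side stays bounded, so no refinement of the case analysis closes the sum in the stated form with only $\|v\|_{L^\infty_{t,x}}$ on the right. It is worth understanding how the paper's argument avoids ever meeting this regime: it bounds $\|P_{k_3}[\sum_i v_i\partial_{x_i}P_{\le k_1+k_2-10}w]\|_{L^\infty_tL^2_x}$ by $2^{k_1+k_2}\|v\|_{L^\infty_{t,x}}\|P_{k_3}w\|_{L^\infty_tL^2_x}$, i.e.\ it passes $P_{k_3}$ through the rough factor $v$ and re-localizes $w$ at the output frequency $k_3$, which is precisely what silently absorbs the missing power of the frequency; your more honest paraproduct bookkeeping, in which $w$ carries its own frequency $k_0$ and the derivative costs $2^{k_0}$, exposes this point immediately. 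To get a provable statement by your method you would need either a scale-covariant assumption on $v$ (decay or derivatives, not just $L^\infty_{t,x}$) or a $2^{k_3 n/2}$-type normalization of this block instead of $2^{k_3(n-2)/2}$; with such a modification your decomposition and pairings would otherwise go through.
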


\begin{proof}
By the symmetry of $k_{1},k_{2}$, we assume that $k_{1}\leq k_{2}$. Then we get
\begin{align*}
&\sum_{k_{j}}2^{k_{3}(n-2)/2}\left\|P_{k_{3}}\left[\sum_{i=1}^{n}v_{i}\partial_{x_{i}}w(P_{k_{1}}fP_{k_{2}}g)\right]\right\|_{L^{2}_{t,x}}\\
&\leq\sum_{k_{j}}2^{k_{3}(n-2)/2}\left\|P_{k_{3}}\left[\sum_{i=1}^{n}v_{i}\partial_{x_{i}}P_{\leq k_{1}+k_{2}-10}w
(P_{k_{1}}fP_{k_{2}}g)\right]\right\|_{L^{2}_{t,x}}\\
&\quad+\sum_{k_{j}}2^{k_{3}(n-2)/2}\left\|P_{k_{3}}\left[\sum_{i=1}^{n}v_{i}\partial_{x_{i}}P_{\geq k_{1}+k_{2}-9}
w(P_{k_{1}}fP_{k_{2}}g)\right]\right\|_{L^{2}_{t,x}}\\
&=:I_1+I_2
\end{align*}

Firstly, we estimate the term $I_1$. Assume that $k_{3}\geq k_{1}+5$, we use Bernstein's inequality and H\"{o}lder's inequality to get
\begin{align}
\begin{split}
I_1
&\lesssim \sum_{k_{j}}2^{k_{3}(n-2)/2}2^{k_{3}n/2}\left\|P_{k_{3}}\left[\sum_{i=1}^{n}v_{i}\partial_{x_{i}}P_{\leq k_{1}+k_{2}-10}w
(P_{k_{1}}fP_{k_{2}}g)\right]\right\|_{L^{2}_{t}L^{1}_{x}}  \\
&\lesssim \sum_{k_{j}}2^{k_{3}(n-2)/2}2^{k_{3}n/2}\left\|P_{k_{3}}\left[\sum_{i=1}^{n}v_{i}\partial_{x_{i}}
P_{\leq k_{1}+k_{2}-10}w\right]\right\|_{L^{\infty}_{t}L^{2}_{x}}
\left\|\widetilde{P_{k_{3}}}(P_{k_{1}}fP_{k_{2}}g)\right\|_{L^{2}_{t,x}}.\label{j>k1+k2}
\end{split}
\end{align}
Notice that the operator $P_{k_{3}}$ always exerts influence on the term $P_{\leq k_{1}+k_{2}-10}w$ and the term $P_{k_{1}}fP_{k_{2}}g$ by the frequency, and the operator $\widetilde{P_{k_{3}}}$ has more larger frequency than the operator$P_{k_{3}}$. So the norm that we obtain will also be larger when we apply a larger frequency to $P_{k_{1}}fP_{k_{2}}g$ (The more accurate reason is the compact supported set becomes bigger, whereas we describe it by the frequency).

We use convolution properties and Plancherel's equality to obtain
\begin{align*}
&\left\|P_{k_{3}}\left[\sum_{i=1}^{n}v_{i}\partial_{x_{i}}
P_{\leq k_{1}+k_{2}-10}w\right]\right\|_{L^{\infty}_{t}L^{2}_{x}}\\
%&=\left\|\chi_{k_{3}}(\xi)\mathscr{F}\bigg\{\sum_{i=1}^{n}v_{i}\partial_{x_{i}}
%P_{\lesssim k_{1}+k_{2}-10}w\bigg\}\right\|_{L^{\infty}_{t}L^{2}_{x}} \nonumber\\
&\lesssim2^{k_{1}+k_{2}}\|v\|_{L^{\infty}_{t,x}}\left\|\chi_{k_{3}}(\xi)\chi_{\leq k_{1}+k_{2}-10}(\xi)
\hat{w}(\xi)\right\|_{L^{\infty}_{t}L^{2}_{\xi}} \\
&\lesssim2^{k_{1}+k_{2}}\|v\|_{L^{\infty}_{t,x}}\|P_{k_{3}}w\|_{L^{\infty}_{t}L^{2}_{x}}.
\end{align*}
Thus, we have
\begin{align*}
I_1
%&\lesssim\sum_{k_{j}} 2^{k_{3}n-k_{3}}2^{k_{1}+k_{2}}\|v\|_{L^{\infty}_{t,x}}\|P_{k_{3}}w\|_{L^{\infty}_{t}L^{2}_{x}}
%\|P_{k_{1}}f\|_{L^{2}_{t}L^{\infty}_{x}}\|P_{k_{2}}g\|_{L^{\infty}_{t}L^{2}_{x}}\\
&\lesssim\sum_{k_{j}}2^{k_{3}n-k_{3}}2^{k_{1}+k_{2}}\|v\|_{L^{\infty}_{t,x}}\|P_{k_{3}}w\|_{L^{\infty}_{t}L^{2}_{x}}
2^{k_{1}(n-2)/2}\|P_{k_{1}}f\|_{L^{2}_{t}L^{\frac{2n}{2n-2}}_{x}}\|P_{k_{2}}g\|_{L^{\infty}_{t}L^{2}_{x}}\\
&\lesssim\sum_{k_{j}}2^{(k_{3}-k_{2})(n-1)/2}\|w\|_{Y^{n/2}} \|f\|_{F^{n/2}} \|g\|_{F^{n/2}}.
\end{align*}
Since $k_{3}\geq k_{1}+5$ and $k_{1}\leq k_{2}$, by the continuity of the frequency (the frequency is unbroken), we can know that $|k_{3}-k_{2}|\leq 5$. On the other hand, when $k_{3}\leq k_{1}+4$, we have $k_{3}-k_{2}\leq 4$, and then this case can be estimated in a similar way. Therefore, we obtain that $I_1\leq C\|v\|_{L^{\infty}_{t,x}}\|w\|_{Y^{n/2}} \|f\|_{F^{n/2}} \|g\|_{F^{n/2}}$.

Secondly, we estimate the term $I_2$. Assuming that $k_{3}\geq k_{1}+5$, using H\"{o}lder's inequality, Bernstein's inequality and convolution properties, it holds that
\begin{align*}
I_2&\lesssim\sum_{k_{j}}2^{k_{3}(n-2)/2}\left\|\sum_{i=1}^{n}v_{i}\partial_{x_{i}}P_{\geq k_{1}+k_{2}-9}
w\right\|_{L^{\infty}_{t,x}}\left\|P_{k_{1}}fP_{k_{2}}g\right\|_{L^{2}_{t,x}} \\
&\lesssim\sum_{k_{j}}2^{k_{3}(n-2)/2}\sum_{i=1}^{n}\|v\|_{L^{\infty}_{t,x}}\|P_{k_{1}+k_{2}}\partial{x_{i}w}\|_{L^{\infty}_{t,x}}
\|P_{k_{1}}f\|_{L^{2}_{t}L^{\infty}_{x}}\|P_{k_{2}}g\|_{L^{\infty}_{t}L^{2}_{x}} \\
&\lesssim\sum_{k_{j}}2^{k_{3}(n-2)/2}2^{k_{1}+k_{2}}\|v\|_{L^{\infty}_{t,x}}2^{(k_{1}+k_{2})n/2}\|P_{k_{1}+k_{2}}w\|_{L^{\infty}_{t}L^{2}_{x}}\\
&\quad\cdot2^{k_{1}(n-2)/2}\|P_{k_{1}}f\|_{L^{2}_{t}L^{\frac{2n}{2n-2}}_{x}}\|P_{k_{2}}g\|_{L^{\infty}_{t}L^{2}_{x}} \\
%&\lesssim\sum_{k_{j}}2^{k_{3}(n-2)/2+k_{1}+k_{2}+(k_{1}+k_{2})n/2+k_{1}(n-2)/2}2^{-(k_{1}+k_{2})n}\|v\|_{L^{\infty}_{t,x}}
%\|w\|_{Y^{n/2}} \|f\|_{F^{n/2}} \|g\|_{F^{n/2}}\\
&\lesssim\sum_{k_{j}}2^{(k_{3}-k_{2})(n/2-1)}\|v\|_{L^{\infty}_{t,x}}\|w\|_{Y^{n/2}} \|f\|_{F^{n/2}} \|g\|_{F^{n/2}},
\end{align*}
where we have used the following fact in the second line:
\begin{align*}
&\sum_{i=1}^{n}\|v_{i}\partial_{x_{i}}P_{\geq k_{1}+k_{2}-9}w\|_{L^{\infty}_{t,x}}\\
&\lesssim\|v\|_{L^{\infty}_{t,x}}\sum_{i=1}^{n}\sum_{j:=k_{1}+k_{2}\in\mathbb{Z}}\sup_{j\geq k_{1}+k_{2}-9}
\|\partial_{x_{i}}P_{j}w\|_{L^{\infty}_{t,x}}\\
&\lesssim\|v\|_{L^{\infty}_{t,x}}\sum_{i=1}^{n}\sum_{k_{1}+k_{2}\in\mathbb{Z}}
\|\partial_{x_{i}}P_{k_{1}+k_{2}}w\|_{L^{\infty}_{t,x}}.
\end{align*}
When $k_{3}\leq k_{1}+4$, we have $k_{3}-k_{2}\leq 4$. This case can be estimated in a similar way. Hence, $I_2\leq C\|v\|_{L^{\infty}_{t,x}}\|w\|_{Y^{n/2}} \|f\|_{F^{n/2}} \|g\|_{F^{n/2}}$. The proof of \eqref{4.3} is finished.
\end{proof}

\begin{lemma}\label{Lemma4.4}%非线性部分u(\nabla u)^2 关于分段函数范数估计
Assume that $n\geq3$, for any $k_{j}\in\mathbb{Z},\;j=1,2,3$, we have
\begin{align}
\sum_{k_{j}}2^{k_{3}n/2}\left\|P_{k_{3}}\left[\sum_{i=1}^{n}v_{i}\partial_{x_{i}}w(P_{k_{1}}fP_{k_{2}}g)\right]\right\|
_{N^{*}_{k_{3}}}\leq C\|v\|_{L^{\infty}_{t,x}}\|w\|_{Y^{n/2}} \|f\|_{F^{n/2}\cap Z^{n/2}} \|g\|_{F^{n/2}\cap Z^{n/2}},\label{4.4}
\end{align}
where the constant $C>0$ is independent of $\varepsilon$ and $k_{j}$.
\end{lemma}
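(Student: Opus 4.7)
The plan is to mirror the frequency decomposition used in Lemma~\ref{Lemma4.3} while additionally performing a modulation decomposition, since the target norm $N^{*}_{k_3}$ is an infimum over splittings $u=u_1+u_2+u_3$ controlled in $L^{1}_{t}L^{2}_{x}$, $2^{-k_3/2}L^{1,2}_{\mathbf{e}_i}$ and $X^{0,-\frac{1}{2},1}$, respectively. By symmetry I take $k_1\leq k_2$ and split $w=P_{\leq k_1+k_2-10}w+P_{\geq k_1+k_2-9}w$; the two ranges $k_3\geq k_1+5$ and $k_3\leq k_1+4$ will be handled separately, exactly as in Lemma~\ref{Lemma4.3}.

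The natural splitting of the output is by modulation relative to the threshold $2^{2k_2}$. The low-modulation part $Q_{\leq 2k_2}P_{k_3}[\sum_i v_i\partial_{x_i}w\cdot P_{k_1}f\,P_{k_2}g]$ I would place into $u_1+u_2$. Lemma~\ref{Qjk-bdd} shows that $Q_{\leq 2k_2}P_{k_3}$ is bounded on $L^{1}_{t}L^{2}_{x}$ and on $L^{1,2}_{\mathbf{e}_i}$, so it suffices to H\"{o}lder the un-projected product. For the $L^{1}_{t}L^{2}_{x}$ bound I use $\|v\|_{L^{\infty}_{t,x}}$, the $L^{\infty}_{t}L^{2}_{x}$ content of $\partial w$ coming from $Y^{n/2}$, and the $L^{2}_{t}L^{\frac{2n}{n-2}}_{x}$, $L^{\infty}_{t}L^{2}_{x}$ pair for $f, g$ from $F^{n/2}$, exactly as in the proof of Lemma~\ref{Lemma4.3}. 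For the anisotropic $L^{1,2}_{\mathbf{e}_i}$ bound I exploit the $L^{\infty,2}_{\mathbf{e}_i}$ and $L^{2,\infty}_{\mathbf{e}_i}$ norms inside $F_{k_i}$: a H\"{o}lder split of type $L^{1,\infty}_{\mathbf{e}_i}\cdot L^{\infty,2}_{\mathbf{e}_i}$ together with the weights $2^{k_i/2}$ and $2^{-(n-1)k_i/2}$ built into those $F_{k_i}$ norms balances the external weight $2^{k_3 n/2}\cdot 2^{-k_3/2}$ and produces geometric decay in $|k_3-k_2|$.

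The high-modulation part $Q_{>2k_2}P_{k_3}[\cdots]$ I would assign to $u_3$ and estimate in $X^{0,-\frac{1}{2},1}$. Decomposing dyadically by output modulation $2^{j}$ with $j>2k_2$, the standard observation is that at least one input factor must carry comparable modulation $\gtrsim 2^{j}$. Transferring this modulation weight by Plancherel to the factor $P_{k_1}f$ or $P_{k_2}g$ and invoking the $Z^{n/2}$ norm, which controls $X^{0,1}$ with an additional $2^{-k_i}$ prefactor, yields a gain of order $2^{-k_2}\cdot 2^{-j/2}$ that sums geometrically in $j\geq 2k_2$. This is precisely why both $F^{n/2}$ and $Z^{n/2}$ norms of $f$ and $g$ appear on the right-hand side of \eqref{4.4}.

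The main obstacle is calibrating the modulation trade in the high-modulation sub-case so that, after combining with the frequency weight $2^{k_3 n/2}$ and the $2^{-k_i}$ prefactor from $Z^{n/2}$, the resulting estimate exhibits geometric decay in all relevant frequency gaps (in particular $|k_3-k_2|$). Once this balance is confirmed, the remaining sub-cases reduce to the H\"{o}lder--Bernstein chain of Lemma~\ref{Lemma4.3} applied to modulation-localized pieces, and the final summation in $k_1,k_2,k_3$ closes.
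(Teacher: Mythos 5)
Your overall architecture (frequency-split of $w$ at $k_1+k_2$, then a modulation split, assigning pieces to the three components of $N^{*}_{k_3}$) is in the right spirit, but the step that is supposed to produce the gain in the high-modulation regime does not survive scrutiny. You place the output modulation cutoff at $2^{2k_2}$ and argue that if the output $Q_{j}$-modulation satisfies $j>2k_2$ then ``at least one input factor must carry comparable modulation $\gtrsim 2^{j}$,'' which you then transfer to $P_{k_1}f$ or $P_{k_2}g$ and pay with the $Z$-norm. This resonance-identity argument requires every factor in the product to have controlled space--time Fourier support, but the factor $v$ is only assumed to lie in $L^{\infty}_{t,x}$: it carries no frequency or modulation localization whatsoever, so multiplication by $v$ can produce arbitrarily large output modulation while $w$, $f$, $g$ all have small modulation. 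Hence the dichotomy you invoke is false as stated, and the subsequent $Z^{n/2}$ trade (the ``gain of order $2^{-k_2}2^{-j/2}$'') has no justification. Even ignoring $v$, you overlook that the large modulation may fall on $w$ rather than on $f$ or $g$; this is exactly the case the paper must treat separately (its term $J_{21}$), where the gain comes from the $X^{0,1}$ component built into the $Y_{k}$ norm of $w$, not from the $Z$ norms of $f,g$. Finally, you explicitly defer ``calibrating the modulation trade'' so that the weights $2^{k_3 n/2}$, the Bernstein losses, and the $2^{-k_i}$ prefactors combine into geometric decay in $|k_3-k_2|$ --- but that calibration is the actual content of the lemma, so the proposal stops short of the crux.

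For comparison, the paper never decomposes the output by modulation at scale $2^{2k_2}$. After splitting $w=P_{\leq k_1+k_2}w+P_{\geq k_1+k_2}w$ and distinguishing frequency regimes (e.g.\ $k_3\geq k_1+20$ versus $k_3\leq k_1+19$, and a further case distinction in $k_2$), it applies modulation projections to the known factors: $Q_{\lessgtr k_1+k_2}$ on $w$ and $Q_{\lessgtr k_1+k_2+O(1)}$ on the product $P_{k_1}fP_{k_2}g$. Each resulting piece is then measured in whichever component of $N^{*}_{k_3}$ ($L^{1}_{t}L^{2}_{x}$, $2^{-k_3/2}L^{1,2}_{\mathbf e_j}$, or $X^{0,-\frac12,1}_{+}$) closes under a H\"older--Bernstein chain, with the high-modulation content of $w$ absorbed by the $X^{0,1}$ part of $Y_{k}$ and the anisotropic $L^{2,\infty}_{\mathbf e}$/$L^{\infty,2}_{\mathbf e}$ norms of $F_{k}$ supplying the remaining decay. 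If you want to salvage your route, you would have to replace the output-modulation dichotomy by input-modulation decompositions of $w$ and of $P_{k_1}fP_{k_2}g$ (so that $v$ never needs to be localized) and add the case in which $w$ carries the high modulation; at that point you are essentially reconstructing the paper's proof.
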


\begin{proof}
By the symmetry of $k_{1}, k_{2}$, we assume that $k_{1}\leq k_{2}$. By decomposing the operator $P_{k_{3}}$, we get
\begin{align*}
&\sum_{k_{j}}2^{k_{3}n/2}\left\|P_{k_{3}}\left[\sum_{i=1}^{n}v_{i}\partial_{x_{i}}w(P_{k_{1}}fP_{k_{2}}g)\right]\right\|_{N^{*}_{k_{3}}}\\
&\leq\sum_{k_{j}}2^{k_{3}n/2}\left\|P_{k_{3}}\left[\sum_{i=1}^{n}v_{i}\partial_{x_{i}}P_{\leq k_{1}+k_{2}}w
(P_{k_{1}}fP_{k_{2}}g)\right]\right\|_{N^{*}_{k_{3}}}\\
&\quad+\sum_{k_{j}}2^{k_{3}n/2}\left\|P_{k_{3}}\left[\sum_{i=1}^{n}v_{i}\partial_{x_{i}}P_{\geq k_{1}+k_{2}}
w(P_{k_{1}}fP_{k_{2}}g)\right]\right\|_{N^{*}_{k_{3}}}\\
&=:J_1+J_2.
\end{align*}
We first estimate the term $J_2$ by decomposing the frequency. Assume that $k_{3}\geq k_{1}+20$. In order to estimate more accurately, we further assume that $-k_{2}\leq 10$, the norm on $N^{*}_{k_{3}}$ is $L_{\mathbf{e}_{j}}^{1,2}$ at this time. Then, we use H\"{o}lder's inequality and Bernstein's inequality to get{\small
\begin{align*}
&\sum_{k_{j}}2^{k_{3}n/2-k_{3}/2}\left\|P_{k_{3}}\left[\sum_{i=1}^{n}v_{i}\partial_{x_{i}}P_{\geq k_{1}+k_{2}}w
(P_{k_{1}}fP_{k_{2}}g)\right]\right\|_{L_{\mathbf{e}_{j}}^{1,2}} \\
%&\lesssim \sum_{k_{j}}2^{k_{3}n/2}2^{-k_{3}/2}\left\|\sum_{i=1}^{n}v_{i}\partial_{x_{i}}P_{\geq k_{1}+k_{2}}w
%P_{k_{2}}g\right\|_{L^{2}_{t,x}}\|P_{k_{1}}f\|_{L_{\mathbf{e}_{j}}^{2,\infty}} \\
&\lesssim \sum_{k_{j}}2^{k_{3}(n-1)/2+k_{1}+k_{2}}\|v\|_{L^{\infty}_{t,x}}\|P_{k_{1}+k_{2}}wP_{k_{2}}g\|_{L^{2}_{t,x}}
\|P_{k_{1}}f\|_{L_{\mathbf{e}_{j}}^{2,\infty}} \\
%&\lesssim\sum_{k_{j}}2^{k_{3}n/2}2^{-k_{3}/2}2^{k_{1}+k_{2}}\|v\|_{L^{\infty}_{t,x}}\|P_{k_{1}+k_{2}}w\|_{L_{\mathbf{e}_{j}}^{2,\infty}}
%\|P_{k_{2}}g\|_{L_{\mathbf{e}_{j}}^{\infty,2}}\|P_{k_{1}}f\|_{L_{\mathbf{e}_{j}}^{2,\infty}} \\
%&\lesssim\sum_{k_{j}}2^{k_{3}n/2}2^{-k_{3}/2}2^{k_{1}+k_{2}}\|v\|_{L^{\infty}_{t,x}}2^{(k_{1}+k_{2})(n-1)/2}\|P_{k_{1}+k_{2}}w\|_{F_{k_{1}+k_{2}}}\\
%&\quad \cdot2^{-k_{2}/2}\|P_{k_{2}}g\|_{F_{k_{2}}}2^{k_{1}(n-1)/2}\|P_{k_{1}}f\|_{F_{k_{1}}}\\
&\lesssim\sum_{k_{j}}2^{k_{3}(n-1)/2+k_{1}+k_{2}}\|v\|_{L^{\infty}_{t,x}}2^{(k_{1}+k_{2})(n-1)/2-k_{2}/2+k_{1}(n-1)/2}
\|P_{k_{1}+k_{2}}w\|_{F_{k_{1}+k_{2}}}\|P_{k_{2}}g\|_{F_{k_{2}}}\|P_{k_{1}}f\|_{F_{k_{1}}}\\
%&\lesssim \sum_{k_{j}}2^{k_{3}n/2+k_{2}n/2+k_{1}n-k_{3}/2+k_{2}/2}2^{-n(k_{1}+k_{2})}
%\|v\|_{L^{\infty}_{t,x}}\|P_{k_{1}+k_{2}}w\|_{Y^{n/2}}\|P_{k_{1}}f\|_{F^{n/2}}\|P_{k_{2}}g\|_{F^{n/2}}\\
&\lesssim \sum_{k_{j}}2^{(k_{3}-k_{2})(n-1)/2-k_{2}/2}
\|v\|_{L^{\infty}_{t,x}}\|P_{k_{1}+k_{2}}w\|_{Y^{n/2}}\|P_{k_{1}}f\|_{F^{n/2}\cap Z^{n/2}}\|P_{k_{2}}g\|_{F^{n/2}\cap Z^{n/2}}.
\end{align*}}
%where the reason of the first inequality is same with \eqref{j>k1+k2}.
On the other hand, assuming that $-k_{2}\geq 9$ and decomposing the modulation operator, we have
\begin{align*}
J_2&\leq \sum_{k_{j}}2^{k_{3}n/2}\left\|P_{k_{3}}\left[\sum_{i=1}^{n}v_{i}\partial_{x_{i}}P_{\geq k_{1}+k_{2}-9}Q_{\geq k_{1}+k_{2}-10}
w(P_{k_{1}}fP_{k_{2}}g)\right]\right\|_{N^{*}_{k_{3}}}\\
&\quad+\sum_{k_{j}}2^{k_{3}n/2}\left\|P_{k_{3}}\left[\sum_{i=1}^{n}v_{i}\partial_{x_{i}}P_{\geq k_{1}+k_{2}-9}Q_{\leq k_{1}+k_{2}-11}
w(P_{k_{1}}fP_{k_{2}}g)\right]\right\|_{N^{*}_{k_{3}}}\\
&=:J_{21}+J_{22}.
\end{align*}
It is easy to verify that the product of Littlewood-Paley projectors $P_{k}$ and modulation projectors $Q_{k}$ are symmetrical, i.e.,
\begin{align*}
P_{k}Q_{k}u(x,t)=Q_{k}P_{k}u(x,t).
\end{align*}

Now, we estimate the term $J_{21}$. Noticing that the norm is $L_{t}^{1}L_{x}^{2}$ on $N^{*}_{k_{3}}$, employing H\"{o}lder's inequality, Minkowski's inequality and Bernstein's inequality, one has {\small
\begin{align*}
J_{21}
&=\sum_{k_{j}}2^{k_{3}n/2}\left\|P_{k_{3}}\left[\sum_{i=1}^{n}v_{i}\partial_{x_{i}}P_{\geq k_{1}+k_{2}-9}Q_{\geq k_{1}+k_{2}-10}
w(P_{k_{1}}fP_{k_{2}}g)\right]\right\|_{L_{t}^{1}L_{x}^{2}}\\
%&\lesssim\sum_{k_{j}}2^{k_{3}n/2}\left\|\sum_{i=1}^{n}v_{i}\partial_{x_{i}}P_{\geq k_{1}+k_{2}-9}Q_{\geq k_{1}+k_{2}-10}w\right\|
%_{_{L_{t}^{2}L_{x}^{\infty}}}\|P_{k_{1}}fP_{k_{2}}g\|_{L_{t,x}^{2}}\\
&\lesssim\sum_{k_{j}}2^{k_{3}n/2}\|v\|_{L^{\infty}_{t,x}}\sum_{i=1}^{n}\sum_{k_{1}+k_{2}\in\mathbb{Z}}\|P_{k_{1}+k_{2}}
Q_{\geq k_{1}+k_{2}-10}\partial_{x_{i}}w\|_{_{L_{t}^{2}L_{x}^{\infty}}}
\|P_{k_{1}}f\|_{L_{\mathbf{e}_{j}}^{2,\infty}}\|P_{k_{2}}g\|_{L_{\mathbf{e}_{j}}^{\infty,2}} \\
%&\lesssim\sum_{k_{j}}2^{k_{3}n/2}\|v\|_{L^{\infty}_{t,x}}\sum_{i=1}^{n}2^{(k_{1}+k_{2})n/2}
%\|P_{k_{1}+k_{2}}Q_{\geq k_{1}+k_{2}-10}\partial_{x_{i}}w\|_{_{L_{t,x}^{2}}}
%\|P_{k_{1}}f\|_{L_{\mathbf{e}_{j}}^{2,\infty}}\|P_{k_{2}}g\|_{L_{\mathbf{e}_{j}}^{\infty,2}} \\
%&\lesssim \sum_{k_{j}}2^{k_{3}n/2}\|v\|_{L^{\infty}_{t,x}}2^{(k_{1}+k_{2})n/2}2^{k_{1}+k_{2}}\sum_{j\geq k_{1}+k_{2}-10}2^{j}\|Q_{j}(P_{k_{1}+k_{2}}w)\|_{_{L_{t,x}^{2}}}\\
%&\quad \cdot2^{-j}\|P_{k_{1}}f\|_{L_{\mathbf{e}_{j}}^{2,\infty}}
%\|P_{k_{2}}g\|_{L_{\mathbf{e}_{j}}^{\infty,2}} \\
&\lesssim \sum_{k_{j}}2^{k_{3}n/2}\|v\|_{L^{\infty}_{t,x}}2^{(k_{1}+k_{2})n/2}2^{k_{1}+k_{2}}\sum_{j\geq k_{1}+k_{2}-10}\|Q_{j}(P_{k_{1}+k_{2}}w)\|_{_{L_{t,x}^{2}}}\|P_{k_{1}}f\|_{L_{\mathbf{e}_{j}}^{2,\infty}}
\|P_{k_{2}}g\|_{L_{\mathbf{e}_{j}}^{\infty,2}} \\
&\lesssim \sum_{k_{j}}2^{k_{3}n/2+(k_{1}+k_{2})n/2+k_{1}+k_{2}}\|v\|_{L^{\infty}_{t,x}}
\left[\sum_{j\geq k_{1}+k_{2}-10}\left(2^{j}\|Q_{j}(P_{k_{1}+k_{2}}w)\|_{_{L_{t,x}^{2}}}\right)^{2}\right]^{\frac{1}{2}}\\
&\quad\left[\sum_{j\geq k_{1}+k_{2}-10}2^{-2j}\right]^{\frac{1}{2}}
\|P_{k_{1}}f\|_{L_{\mathbf{e}_{j}}^{2,\infty}}\|P_{k_{2}}g\|_{L_{\mathbf{e}_{j}}^{\infty,2}}\\
&\lesssim\sum_{k_{j}}2^{k_{3}n/2+(k_{1}+k_{2})n/2+k_{1}+k_{2}}\|v\|_{L^{\infty}_{t,x}}2^{-(k_{1}+k_{2})}
\|P_{k_{1}+k_{2}}w\|_{X^{0,1,2}}\|P_{k_{1}}f\|_{L_{\mathbf{e}_{j}}^{2,\infty}}\|P_{k_{2}}g\|_{L_{\mathbf{e}_{j}}^{\infty,2}} \\
%&\lesssim \sum_{k_{j}}2^{k_{3}n/2+(k_{1}+k_{2})n/2+k_{1}+k_{2}}\|v\|_{L^{\infty}_{t,x}}2^{-2(k_{1}+k_{2})}
%\|P_{k_{1}+k_{2}}w\|_{Y_{k_{1}+k_{2}}}2^{(n-1)k_{1}/2}\|P_{k_{1}}f\|_{F_{k_{1}}}2^{-k_{2}}\|P_{k_{2}}g\|_{F_{k_{2}}}\\
&\lesssim \sum_{k_{j}}2^{(k_{3}-k_{2})n/2+(k_{1}+k_{2})/2}
\|v\|_{L^{\infty}_{t,x}}\|P_{k_{1}+k_{2}}w\|_{Y^{n/2}}\|P_{k_{1}}f\|_{F^{n/2}}\|P_{k_{2}}g\|_{F^{n/2}}.
\end{align*}}
\!\!For the term $J_{22}$, divide $P_{k_{1}}fP_{k_{2}}g$ into two parts by modulation projectors and then we obtain {\small
\begin{align*}
J_{22}
&\leq \sum_{k_{j}}2^{k_{3}n/2}\sum_{k_{j}}2^{k_{3}n/2}\left\|P_{k_{3}}\left[\sum_{i=1}^{n}v_{i}\partial_{x_{i}}P_{\geq k_{1}+k_{2}-9}
Q_{\leq k_{1}+k_{2}-10}wQ_{\geq k_{1}+k_{2}+40}(P_{k_{1}}fP_{k_{2}}g)\right]\right\|_{N^{*}_{k_{3}}}\\
&\quad+\sum_{k_{j}}2^{k_{3}n/2}\sum_{k_{j}}2^{k_{3}n/2}\left\|P_{k_{3}}\left[\sum_{i=1}^{n}v_{i}\partial_{x_{i}}
P_{\geq k_{1}+k_{2}-9}Q_{\leq k_{1}+k_{2}-11}wQ_{\leq k_{1}+k_{2}-39}(P_{k_{1}}fP_{k_{2}}g)\right]\right\|_{N^{*}_{k_{3}}}\\
&=:J_{221}+J_{222}.
\end{align*}}

Next, we estimate the term $J_{221}$. Similarly, one has {\small
\begin{align*}
J_{221}
&=\sum_{k_{j}}2^{k_{3}n/2}\left\|P_{k_{3}}\left[\sum_{i=1}^{n}v_{i}\partial_{x_{i}}P_{\geq k_{1}+k_{2}-9}
Q_{\leq k_{1}+k_{2}-10}wQ_{\geq k_{1}+k_{2}+40}(P_{k_{1}}fP_{k_{2}}g)\right]\right\|_{X_{+}^{0,-\frac{1}{2},1}} \\
%&\lesssim\sum_{k_{j}}2^{k_{3}n/2}\sum_{j\in\mathbb{Z}}2^{-\frac{j}{2}}\left\|Q_{j}\left[P_{k_{3}}\left(\sum_{i=1}^{n}v_{i}
%\partial_{x_{i}}P_{\geq k_{1}+k_{2}-9}Q_{\lesssim k_{1}+k_{2}-10}wQ_{\geq k_{1}+k_{2}+40}
%(P_{k_{1}}fP_{k_{2}}g)\right)\right]\right\|_{L_{t,x}^{2}} \\
&\lesssim \sum_{k_{j}}2^{k_{3}n/2}\sum_{j\geq k_{1}+k_{2}}2^{-\frac{j}{2}}\left\|P_{k_{3}}\left(\sum_{i=1}^{n}v_{i}
\partial_{x_{i}}P_{\geq k_{1}+k_{2}-9}Q_{\leq k_{1}+k_{2}-10}wQ_{\geq k_{1}+k_{2}+40}
(P_{k_{1}}fP_{k_{2}}g)\right)\right\|_{L_{t,x}^{2}} \\
%&\lesssim \sum_{k_{j}}2^{k_{3}n/2}2^{-(k_{1}+k_{2})/2}2^{k_{1}+k_{2}}
%\|v\|_{L^{\infty}_{t,x}}\|P_{k_{1}+k_{2}}Q_{\lesssim k_{1}+k_{2}-10}w\|_{L^{\infty}_{t,x}}
%\|Q_{\geq k_{1}+k_{2}+40}(P_{k_{1}}fP_{k_{2}}g)\|_{L^{2}_{t,x}} \\
&\lesssim \sum_{k_{j}}2^{k_{3}n/2}2^{k_{1}+k_{2}}\|v\|_{L^{\infty}_{t,x}}2^{(k_{1}+k_{2})n/2}\|P_{k_{1}+k_{2}}Q_{\leq k_{1}+k_{2}-10}w\|
_{L^{\infty}_{t}L^{2}_{x}}\|P_{k_{1}}fP_{k_{2}}g\|_{L^{2}_{t,x}} \\
%&\lesssim \sum_{k_{j}}2^{k_{3}n/2}2^{(k_{1}+k_{2})(n+1)/2}\|v\|_{L^{\infty}_{t,x}}\|P_{k_{1}+k_{2}}w\|_
%{L^{\infty}_{t}L^{2}_{x}}\|P_{k_{1}}f\|_{L^{2,\infty}_{\mathbf{e}_{j}}}\|P_{k_{2}}g\|_{L^{\infty,2}_{\mathbf{e}_{j}}} \\
&\lesssim \sum_{k_{j}}2^{k_{3}n/2}2^{(k_{1}+k_{2})(n+1)/2}\|v\|_{L^{\infty}_{t,x}}\|P_{k_{1}+k_{2}}w\|_{Y_{k_{1}+k_{2}}}
2^{(n-1)k_{1}/2}\|P_{k_{1}}f\|_{F_{k_{1}}}2^{-k_{2}/2}\|P_{k_{2}}f\|_{F_{k_{2}}}  \\
&\lesssim \sum_{k_{j}}2^{(k_{3}-k_{2})n/2}\|v\|_{L^{\infty}_{t,x}}
\|P_{k_{1}+k_{2}}w\|_{Y^{n/2}}\|P_{k_{1}}f\|_{F^{n/2}}\|P_{k_{2}}g\|_{F^{n/2}},
\end{align*}}
\!\!where the boundness of the operators $Q_{k},\;Q_{\geq k}$ in space $L^{2}_{t,x}$ can be obtained by Plancherel theorem and the operator $Q_{\leq k}$ on space $L^{p}_{t}L^{2}_{x}$ is bounded  by Lemma \ref{Qjk-bdd}. According to the frequency (support) of operators $Q_{\leq k_{1}+k_{2}-10}$ and $Q_{\geq k_{1}+k_{2}+40}$, here $j\geq k_{1}+k_{2}$ is obvious.

For the term $J_{222}$, by the fact that the norm on $N^{*}_{k_{3}}$ is $L^{1,2}_{\mathbf{e}_{j}}$, we have{\small
\begin{align}
\begin{split}
&J_{222}=\sum_{k_{j}}2^{k_{3}(n-1)/2}\left\|P_{k_{3}}\left[\sum_{i=1}^{n}v_{i}\partial_{x_{i}}P_{\geq k_{1}+k_{2}-9}
Q_{\leq k_{1}+k_{2}-10}wQ_{\leq k_{1}+k_{2}-39}(P_{k_{1}}fP_{k_{2}}g)\right]\right\|_{L^{1,2}_{\mathbf{e}_{j}}} \\
&\lesssim \sum_{k_{j}}2^{k_{3}(n-1)/2}\left\|P_{k_{3}}\left[\sum_{i=1}^{n}v_{i}\partial_{x_{i}}P_{\geq k_{1}+k_{2}-9}
Q_{\leq k_{1}+k_{2}-10}w\right]\right\|_{L^{2,\infty}_{\mathbf{e}_{j}}}\left\|\widetilde{P_{k_{3}}}[Q_{\leq k_{1}+k_{2}-39}(P_{k_{1}}fP_{k_{2}}g)]\right\|_{L^{2}_{t,x}} \\
&\lesssim \sum_{k_{j}}2^{k_{3}(n-1)}\|v\|_{L^{\infty}_{t,x}}\left\|P_{k_{3}}\left[\sum_{i=1}^{n}
\partial_{x_{i}}P_{\geq k_{1}+k_{2}-9}Q_{\leq k_{1}+k_{2}-10}w\right]\right\|_{L^{\infty}_{t}L^{2}_{x}}
\|P_{k_{1}}fP_{k_{2}}g\|_{L^{2}_{t,x}} \\
&\lesssim \sum_{k_{j}}2^{k_{3}(n-1)}\|v\|_{L^{\infty}_{t,x}}\|P_{k_{3}}(P_{k_{1}+k_{2}}\partial_{x_{i}}w)\|_{L^{\infty}_{t}L^{2}_{x}}
\|P_{k_{1}}fP_{k_{2}}g\|_{L^{2}_{t,x}} \\
&\lesssim \sum_{k_{j}}2^{k_{3}(n-1)}\|v\|_{L^{\infty}_{t,x}}2^{k_{1}+k_{2}}\|P_{k_{3}}w\|_{L^{\infty}_{t}L^{2}_{x}}
2^{(n-1)k_{1}/2}\|P_{k_{1}}f\|_{{L^{\infty}_{t}L^{\frac{2n}{2n-2}}_{x}}}\|P_{k_{2}}g\|_{{L^{\infty}_{t}L^{2}_{x}}} \\
&\lesssim \sum_{k_{j}}2^{(k_{3}-k_{2})(n/2-1)}\|v\|_{L^{\infty}_{t,x}}
\|P_{k_{3}}w\|_{Y^{n/2}}\|P_{k_{1}}f\|_{F^{n/2}}\|P_{k_{2}}g\|_{F^{n/2}},\label{4.11}
\end{split}
\end{align}}
\!\!where we have used Bernstein's inequality in the third line of \eqref{4.11} with respect to $\bar{x}_{j}$, and we have used Lemma \ref{Qjk-bdd} and \eqref{j>k1+k2} in the fourth line.

On the contrary, we assume that $k_{3}\leq k_{1}+19$ and repeat the above arguments. However the assumptions are changed, supposing that $-k_{1}\leq 10$ and $-k_{1}\geq 9$, respectively. Then, we obtain the desired result $J_{2}\leq C \|v\|_{L^{\infty}_{t,x}}\|w\|_{Y^{n/2}} \|f\|_{F^{n/2}\cap Z^{n/2}} \|g\|_{F^{n/2}\cap Z^{n/2}}$.

Next, we estimate the term $J_1$. Assuming that $k_{3}\geq k_{1}+20$, using the fact that the norm is $L^{1,2}_{\mathbf{e}_{j}}$ on $N^{*}_{k_{3}}$ at this time yields that
\begin{align*}
J_1
&=\sum_{k_{j}}2^{k_{3}n/2}2^{-k_{3}/2}\left\|P_{k_{3}}\left[\sum_{i=1}^{n}v_{i}\partial_{x_{i}}P_{\leq k_{1}+k_{2}}w
(P_{k_{1}}fP_{k_{2}}g)\right]\right\|_{L^{1,2}_{\mathbf{e}_{j}}} \\
&\lesssim \sum_{k_{j}}2^{k_{3}n/2}2^{-k_{3}/2}\left\|P_{k_{3}}\left[\sum_{i=1}^{n}v_{i}\partial_{x_{i}}P_{\leq k_{1}+k_{2}}
w\right]\right\|_{L^{2,\infty}_{\mathbf{e}_{j}}}\|\widetilde{P_{k_{3}}}(P_{k_{1}}fP_{k_{2}}g)\|_{L^{2}_{t,x}}  \\
&\lesssim \sum_{k_{j}}2^{k_{3}n/2}2^{-k_{3}/2}\|v\|_{L^{\infty}_{t,x}}2^{(n-1)k_{3}/2}\left\|P_{k_{3}}\left[\sum_{i=1}^{n}v_{i}
\partial_{x_{i}}P_{k_{1}+k_{2}}w\right]\right\|_{L^{\infty}_{t}L^{2}_{x}}
\|P_{k_{1}}fP_{k_{2}}g\|_{L^{2}_{t,x}} \\
%&\lesssim \sum_{k_{j}}2^{k_{3}(n-1)}\|v\|_{L^{\infty}_{t,x}}\|P_{k_{3}}(P_{k_{1}+k_{2}}\partial_{x_{i}}w)\|_{L^{\infty}_{t}L^{2}_{x}}
%\|P_{k_{1}}fP_{k_{2}}g\|_{L^{2}_{t,x}} \\
&\lesssim \sum_{k_{j}}2^{k_{3}(n-1)}\|v\|_{L^{\infty}_{t,x}}2^{k_{1}+k_{2}}\|P_{k_{3}}w\|_{L^{\infty}_{t}L^{2}_{x}}
2^{(n-1)k_{1}/2}\|P_{k_{1}}f\|_{{L^{\infty}_{t}L^{\frac{2n}{2n-2}}_{x}}}\|P_{k_{2}}g\|_{{L^{\infty}_{t}L^{2}_{x}}} \\
&\lesssim \sum_{k_{j}}2^{(k_{3}-k_{2})(n/2-1)}\|v\|_{L^{\infty}_{t,x}}
\|P_{k_{3}}w\|_{Y^{n/2}}\|P_{k_{1}}f\|_{F^{n/2}}\|P_{k_{2}}g\|_{F^{n/2}}.
\end{align*}
On the other hand, assume that $k_{3}\leq k_{1}+19$, exchange the space of terms $P_{k_{1}}f$ and $P_{k_{2}}g$. In conclusion, we obtain \eqref{4.4}.
\end{proof}

\begin{lemma}\label{Lemma4.1}%非线性部分u(\nabla u)^2 的Lebesgue 范数估计
Assume that $n\geq3$, for any $k_{j}\in\mathbb{Z},\;j=1,2,3$, we have
\begin{align}
\sum_{k_{j}}2^{k_{3}(n-2)/2}\left\|P_{k_{3}}\left[w\sum_{i=1}^{n}(\partial_{x_{i}}P_{k_{1}}f \partial_{x_{i}}P_{k_{2}}g)\right]\right\|
_{L^{2}_{t,x}}\leq C\|w\|_{Y^{n/2}} \|f\|_{F^{n/2}} \|g\|_{F^{n/2}}.\label{4.1}
\end{align}
where the constant $C>0$ is independent of $\varepsilon$ and $k_{j}$.
\end{lemma}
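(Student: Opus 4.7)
I will follow the Littlewood-Paley/paraproduct strategy of Lemma~\ref{Lemma4.3}, adapted to the present nonlinearity $w\,\partial_{x_i}P_{k_1}f\,\partial_{x_i}P_{k_2}g$ in which the derivatives sit on $f$ and $g$ rather than on $w$. By the $k_1\leftrightarrow k_2$ symmetry it is enough to treat $k_1\le k_2$. I will split the $w$-factor into a low- and a high-frequency piece,
\[
w=P_{\le k_1+k_2-10}w+P_{\ge k_1+k_2-9}w,
\]
and estimate the resulting two contributions, call them $K_1$ and $K_2$, separately, each further split according to the size of $k_3$ relative to $k_1,k_2$.

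For $K_1$ (low-frequency $w$), in the sub-case $k_3\ge k_1+5$ the Fourier-support convolution constraint forces $|k_3-k_2|\le C$, confining the $k_3$-sum to a bounded neighbourhood of $k_2$. I will apply Bernstein to move $P_{k_3}$ from $L^2_x$ to $L^1_x$ at the cost of $2^{k_3 n/2}$, then use Hölder to split the resulting norm as
\[
\bigl\|P_{k_3}[P_{\le k_1+k_2-10}w]\bigr\|_{L^\infty_tL^2_x}\cdot\bigl\|\widetilde P_{k_3}(\partial P_{k_1}f\cdot\partial P_{k_2}g)\bigr\|_{L^2_{t,x}}.
\]
The first factor reduces to $\|P_{k_3}w\|_{L^\infty_tL^2_x}$ via the Plancherel-plus-Fourier-cutoff trick already used in \eqref{j>k1+k2}; the second is bounded via $\|\partial P_{k_1}f\|_{L^2_tL^{2n/(n-2)}_x}\cdot\|\partial P_{k_2}g\|_{L^\infty_tL^n_x}$, with Bernstein converting the $L^n_x$-norm to a $2^{k_2(n/2-1)}$-multiple of the $L^2_x$-norm. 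The complementary sub-case $k_3\le k_1+4$ is handled analogously after exchanging the $L^2_t L^{2n/(n-2)}_x$ and $L^\infty_tL^n_x$ roles of the $f$- and $g$-factors.

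For $K_2$ (high-frequency $w$), the output frequency is dictated by $w$, so $k_3\sim k\ge k_1+k_2-9$. I will use Hölder in the form $\|P_{\ge k_1+k_2-9}w\|_{L^\infty_tL^2_x}\cdot\|\partial P_{k_1}f\|_{L^2_tL^\infty_x}\cdot\|\partial P_{k_2}g\|_{L^\infty_tL^2_x}$, with Bernstein on the low-frequency $f$-factor absorbing the extra $2^{k_1(n-2)/2}$ (and relating $\|P_{k_3}w\|$ to the piece of $\|w\|_{Y^{n/2}}$ at frequency $\sim k_3$). In each case, assembling the $2^{k_jn/2}$ weights coming from $\|f\|_{F^{n/2}}$, $\|g\|_{F^{n/2}}$, $\|w\|_{Y^{n/2}}$ against the powers of $2^{k_j}$ collected above leaves only a residual factor $2^{(k_3-k_2)\alpha}$ (or $2^{(k_3-k_1-k_2)\alpha}$) which is bounded because the constraint restricts $k_3$ to a $C$-neighbourhood of $\max(k_1,k_2)$ (respectively $k_1+k_2$).

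The main obstacle, as in Lemma~\ref{Lemma4.3}, will be to calibrate the Hölder splits and the Bernstein insertions so that after matching against the Besov weights no growing factor of the form $2^{(k_2-k_1)\beta}$ with $\beta>0$ survives, since otherwise the summation over $k_1\le k_2$ diverges. The two-level frequency split above (low/high on $w$, then $k_3\gtreqless k_1+5$), combined with the Fourier-support restriction that bounds $k_3-\max(k_1,k_2)$, is precisely the mechanism that makes every sum range over a finite window and recovers the desired bound in terms of $\|w\|_{Y^{n/2}}\|f\|_{F^{n/2}}\|g\|_{F^{n/2}}$.
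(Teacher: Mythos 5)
There is a genuine gap, and it is concentrated in exactly the place you flag as "the main obstacle" without resolving it. First, the decomposition threshold is wrong for this term. Splitting $w=P_{\le k_1+k_2-10}w+P_{\ge k_1+k_2-9}w$ is natural in Lemma \ref{Lemma4.3} only because there the derivative falls on $w$ and must be converted into the factor $2^{k_1+k_2}$; in \eqref{4.1} no derivative sits on $w$, and the paper instead splits $w$ at the output scale, $w=P_{\ge k_3-10}w+P_{\le k_3-11}w$. With your split the support claims you rely on fail: in $K_1$ the "low-frequency" piece $P_{\le k_1+k_2-10}w$ contains frequencies up to $2^{k_1+k_2-10}$, which for $k_1>10$ far exceeds $2^{k_2}$, so the convolution constraint does \emph{not} force $|k_3-k_2|\le C$ (take $k_1=k_2=100$: the output can sit anywhere up to $k_3\approx 190$); symmetrically, in $K_2$ the claim $k_3\gtrsim k_1+k_2$ fails when $k_1$ is small. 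Under the paper's split, by contrast, the low-$w$ piece genuinely forces $k_3\le k_2+O(1)$, which is what makes every residual factor depend on bounded or decaying differences.

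Second, and independently, your H\"older/Bernstein calibration does not close even where your support claim would hold. In $K_1$ you place the Bernstein loss on the \emph{high}-frequency factor, estimating $\|\partial P_{k_2}g\|_{L^\infty_tL^n_x}\lesssim 2^{k_2}2^{k_2(n/2-1)}\|P_{k_2}g\|_{L^\infty_tL^2_x}$ while keeping $\partial P_{k_1}f$ in $L^2_tL^{2n/(n-2)}_x$. Matching against the weights $2^{k_jn/2}$ this leaves the factor $2^{(k_3-k_1)(n/2-1)}$, which even on the set $|k_3-k_2|\le C$ is $\approx 2^{(k_2-k_1)(n/2-1)}$ and is not summable over $k_1\le k_2$ (recall $n\ge3$, so the exponent is positive); this is precisely the forbidden $2^{(k_2-k_1)\beta}$ growth. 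The paper avoids it by Bernstein-ing the \emph{low} factor ($\|\partial P_{k_1}f\|_{L^2_tL^\infty_x}\lesssim 2^{k_1}2^{k_1(n-2)/2}\|P_{k_1}f\|_{L^2_tL^{2n/(n-2)}_x}$) and keeping $P_{k_2}g$ in $L^\infty_tL^2_x$, so the $k_1$-exponents cancel and only $2^{(k_3-k_2)(n/2-1)}$ with $k_3\le k_2+O(1)$ survives. Your $K_2$ pairing $\|w\|_{L^\infty_tL^2_x}\|\partial P_{k_1}f\|_{L^2_tL^\infty_x}\|\partial P_{k_2}g\|_{L^\infty_tL^2_x}$ has the same defect in the only regime it must cover (output frequency above $k_2$, hence carried by $w$): after the forced Bernstein step $2^{k_3n/2}$ one is left with a growing factor of the type $2^{(k_3-k_2)(n/2-1)}$ (or $2^{k_1(n/2-1)}$ if one only uses the tail bound on $P_{\ge k_1+k_2-9}w$), whereas the paper gains the decaying factor $2^{k_2-k_3}$ there by pairing $\|P_{k_1}f\|_{L^{2,\infty}_{\mathbf e_j}}$ with $\|P_{k_2}g\|_{L^{\infty,2}_{\mathbf e_j}}$ (one could alternatively put the whole product $\partial P_{k_1}f\,\partial P_{k_2}g$ in $L^2_tL^\infty_x$, but some such re-pairing is indispensable). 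As written, your chain of inequalities therefore does not yield \eqref{4.1}.
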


\begin{proof}
By the symmetry of $k_{1},k_{2}$, assume that $k_{1}\leq k_{2}$. We use the operator $P_{k}$ to divide the frequency of $w$ into two parts:
\begin{align*}
&\sum_{k_{j}}2^{k_{3}(n-2)/2}\left\|P_{k_{3}}\left[w\sum_{i=1}^{n}(\partial_{x_{i}}P_{k_{1}}f \partial_{x_{i}}P_{k_{2}}g)\right]\right\|
_{L^{2}_{t,x}} \\
&\leq \sum_{k_{j}}2^{k_{3}(n-2)/2}\left\|P_{k_{3}}\left[P_{\geq k_{3}-10}w\sum_{i=1}^{n}(\partial_{x_{i}}P_{k_{1}}f \partial_{x_{i}}P_{k_{2}}g)\right]\right\|_{L^{2}_{t,x}} \\
&\quad+\sum_{k_{j}}2^{k_{3}(n-2)/2}\left\|P_{k_{3}}\left[P_{\leq k_{3}-11}w\sum_{i=1}^{n}(\partial_{x_{i}}P_{k_{1}}f \partial_{x_{i}}P_{k_{2}}g)\right]\right\|_{L^{2}_{t,x}}\\
&=:K_1+K_2,
\end{align*}

Now, we estimate the term $K_1$ and assume that $k_{3}\leq k_{2}+20$. By Bernstein's inequality and convolution properties, one has
\begin{align*}
K_1&\lesssim \sum_{k_{j}}2^{(n-1)k_{3}}\|P_{\geq k_{3}-10}w\|_{L^{\infty}_{t}L^{2}_{x}}\|\sum_{i=1}^{n}(\partial_{x_{i}}P_{k_{1}}f \partial_{x_{i}}P_{k_{2}}g)\|_{L^{2}_{t,x}}\\
&\lesssim \sum_{k_{j}}2^{(n-1)k_{3}}\|P_{k_{3}}w\|_{L^{\infty}_{t}L^{2}_{x}}2^{k_{1}+k_{2}}2^{(n-2)k_{1}/2}
\|P_{k_{1}}f\|_{L^{2}_{t}L^{\frac{2n}{n-2}}_{x}}\|P_{k_{2}}g\|_{L^{\infty}_{t}L^{2}_{x}}\\
&\lesssim \sum_{k_{j}}2^{(k_{3}-k_{2})(n/2-1)}\|w\|_{Y^{n/2}}\|f\|_{F^{n/2}}\|g\|_{F^{n/2}}.
\end{align*}
On the other hand, assuming that $k_{3}\geq k_{2}+21$, since $P_{\geq k_{3}}w$ has more bigger frequency than output frequency, and the frequency of $w$ is equivalent to $2^{k_{3}}$, we obtain
\begin{align*}
K_1&\lesssim \sum_{k_{j}}2^{k_{3}(n-2)/2}\|P_{k_{3}}w\|_{L^{\infty}_{t}L^{2}_{x}}\|\sum_{i=1}^{n}(\partial_{x_{i}}P_{k_{1}}f \partial_{x_{i}}P_{k_{2}}g)\|_{L^{2}_{t}L^{\infty}_{x}} \\
&\lesssim \sum_{k_{j}}2^{k_{3}(n-2)/2}\|P_{k_{3}}w\|_{L^{\infty}_{t}L^{2}_{x}}2^{k_{2}n/2}2^{k_{1}+k_{2}}\|P_{k_{1}}f P_{k_{2}}g\|_{L^{2}_{t,x}} \\
&\lesssim \sum_{k_{j}}2^{k_{3}(n-2)/2}\|P_{k_{3}}w\|_{L^{\infty}_{t}L^{2}_{x}}2^{k_{2}n/2}2^{k_{1}+k_{2}}
\|P_{k_{1}}f\|_{L^{2,\infty}_{\mathbf{e}_{j}}}\|P_{k_{2}}g\|_{L^{\infty,2}_{\mathbf{e}_{j}}}\\
&\lesssim \sum_{k_{j}}2^{k_{2}-k_{3}}\|w\|_{Y^{n/2}}\|f\|_{F^{n/2}}\|g\|_{F^{n/2}},
\end{align*}
where the output frequency of the third line is equivalent to $2^{k_{1}}+2^{k_{2}}$ by applying the convolution theorem, and we enlarge the output frequency to $2^{k_{2}}$. Hence, we get that $K_{1}\leq C\|w\|_{Y^{n/2}}\|f\|_{F^{n/2}}\|g\|_{F^{n/2}}$.

Next, we estimate the term $K_2$ when assume that $k_{3}\leq k_{2}+5$. Using Bernstein's inequality and convolution properties yields that
\begin{align*}
K_2&\lesssim \sum_{k_{j}}2^{k_{3}(n-1)}\|P_{k_{3}}w\|_{L^{\infty}_{t}L^{2}_{x}}\|\widetilde{P_{k_{3}}}(\sum_{i=1}^{n}\partial_{x_{i}}P_{k_{1}}f \partial_{x_{i}}P_{k_{2}}g)\|_{L^{2}_{t,x}} \\
&\lesssim \sum_{k_{j}}2^{k_{3}(n-1)}\|P_{k_{3}}w\|_{L^{\infty}_{t}L^{2}_{x}}2^{k_{1}+k_{2}}2^{(n-2)k_{2}/2}
\|P_{k_{1}}f\|_{L^{\infty}_{t}L^{2}_{x}}\|P_{k_{2}}g\|_{L^{2}_{t}L^{\frac{2n}{n-2}}_{x}}\\
&\lesssim \sum_{k_{j}}2^{(k_{3}-k_{1})(n/2-1)}\|w\|_{Y^{n/2}}\|f\|_{F^{n/2}}\|g\|_{F^{n/2}}.
\end{align*}
On the other hand, assuming that $k_{3}\geq k_{1}+6$ yields that $|k_{3}-k_{2}|\leq6$. Then, one has
\begin{align*}
K_2&\lesssim \sum_{k_{j}}2^{k_{3}(n-1)}\|P_{k_{3}}w\|_{L^{\infty}_{t}L^{2}_{x}}\|\widetilde{P_{k_{3}}}(\sum_{i=1}^{n}\partial_{x_{i}}P_{k_{1}}f \partial_{x_{i}}P_{k_{2}}g)\|_{L^{2}_{t,x}} \\
&\lesssim \sum_{k_{j}}2^{k_{3}(n-1)}\|P_{k_{3}}w\|_{L^{\infty}_{t}L^{2}_{x}}2^{k_{1}+k_{2}}2^{(n-2)k_{1}/2}
\|P_{k_{1}}f\|_{L^{2}_{t}L^{\frac{2n}{n-2}}_{x}}\|P_{k_{2}}g\|_{L^{\infty}_{t}L^{2}_{x}}\\
&\lesssim \sum_{k_{j}}2^{(k_{3}-k_{2})(n/2-1)}\|w\|_{Y^{n/2}}\|f\|_{F^{n/2}}\|g\|_{F^{n/2}},
\end{align*}
which implies that $K_2\leq C \|w\|_{Y^{n/2}}\|f\|_{F^{n/2}}\|g\|_{F^{n/2}}$. Combining the above estimates of $K_1$ and $K_2$,  we get \eqref{4.1}. The proof of Lemma \ref{Lemma4.1} is completed.
\end{proof}

\begin{lemma}\label{Lemma4.2}%非线性部分u(\nabla u)^2 关于分段函数范数估计
Assume that $n\geq3$, for any $k_{j}\in\mathbb{Z},\;j=1,\ldots,4$, we have
\begin{align}
\sum_{k_{j}}2^{k_{4}n/2}\left\|P_{k_{4}}\left[P_{k_{1}}w\sum_{i=1}^{n}(\partial_{x_{i}}P_{k_{2}}f \partial_{x_{i}}P_{k_{3}}g)\right]\right\|
_{N_{k_{4}}^{*}}\leq C\|w\|_{Y^{n/2}} \|f\|_{F^{n/2}\cap Z^{n/2}} \|g\|_{F^{n/2}\cap Z^{n/2}},\label{4.2}
\end{align}
where the constant $C>0$ is independent of $\varepsilon$ and $k_{j}$.
\end{lemma}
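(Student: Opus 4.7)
The plan is to adapt the case analysis of Lemma \ref{Lemma4.4} to the cubic expression $P_{k_1}w\sum_{i=1}^{n}(\partial_{x_i}P_{k_2}f\,\partial_{x_i}P_{k_3}g)$. The structural change from Lemma \ref{Lemma4.4} is that both spatial derivatives now land on the inputs $f,g$ rather than on the single factor $w$, and the low-frequency factor $v$ has been removed; so we lose the small factor $\|v\|_{L^{\infty}_{t,x}}$ but must absorb two spatial derivatives on $f,g$, which forces the use of the $Z^{n/2}$ component of the solution space in the most delicate sub-case. By the symmetry $k_2\leftrightarrow k_3$ I assume $k_2\leq k_3$ and decompose
\[
P_{k_1}w=P_{\leq k_2+k_3}(P_{k_1}w)+P_{\geq k_2+k_3+1}(P_{k_1}w),
\]
mirroring the split into $J_1,J_2$ in Lemma \ref{Lemma4.4}. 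On the output side, $N^{*}_{k_{4}}$ is an infimum over three slots, namely $L^{1}_{t}L^{2}_{x}$, $2^{-k_{4}/2}L^{1,2}_{\mathbf{e}_{j}}$, and $X^{0,-1/2,1}_{+}$, and the slot will be chosen according to the relative sizes of the $k_{j}$'s.

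In the output-dominant regime $k_{4}\geq k_{3}+20$, frequency support forces $|k_{1}-k_{4}|\leq 10$, and I would use the $L^{1}_{t}L^{2}_{x}$ slot of $N^{*}_{k_{4}}$: place $P_{k_{1}}w$ in $L^{\infty}_{t}L^{2}_{x}$ controlled by $\|w\|_{Y^{n/2}}$, pay $2^{k_{2}+k_{3}}$ for the derivatives, and place the two gradient factors in $L^{2}_{t}L^{2n/(n-2)}_{x}$ and $L^{\infty}_{t}L^{2}_{x}$, both controlled by $F^{n/2}$; H\"older then produces a geometric gain $2^{(k_{4}-k_{3})\alpha}$ with $\alpha>0$. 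In the transverse regime $k_{4}\leq k_{3}+19$, I would use the $L^{1,2}_{\mathbf{e}_{j}}$ slot: H\"older one gradient factor into $L^{2,\infty}_{\mathbf{e}_{j}}$ and the other into $L^{\infty,2}_{\mathbf{e}_{j}}$ (both built into $F_{k}$), while keeping $P_{k_{1}}w$ in $L^{\infty}_{t}L^{2}_{x}$; the geometric decay then comes from $2^{(k_{4}-\max\{k_{1},k_{3}\})\alpha}$ depending on which of $k_{1},k_{3}$ is larger. The sub-cases $-k_{1}\leq 10$ versus $-k_{1}\geq 9$ (and analogously for $k_{3}$) are handled exactly as in the passage between $J_{2}$ and $J_{22}$ of Lemma \ref{Lemma4.4}.

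The genuinely delicate sub-case is the high-high collision $k_{1}\sim k_{3}\gg k_{4}$ inside the $P_{\geq k_{2}+k_{3}+1}w$ branch. Here the modulation $\tau+|\xi|^{2}$ is forced to be of size $\gtrsim 2^{2k_{3}}$ on at least one factor, so, imitating the passage from $J_{2}$ to $J_{21},J_{22}$ in Lemma \ref{Lemma4.4}, I further split $P_{k_{1}}w=P_{k_{1}}Q_{\geq k_{2}+k_{3}-10}w+P_{k_{1}}Q_{\leq k_{2}+k_{3}-11}w$ and invoke Lemma \ref{Qjk-bdd} to handle the $Q$-projectors on the relevant Lebesgue spaces. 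On the high-modulation piece, the square-function characterization of the $X^{0,1,2}$ norm embedded in $Y_{k_{1}}$ gives the control, exactly as in $J_{21}$. On the low-modulation piece of $w$, the high modulation must sit on $f$ or on $g$, and a further $Q$-split of those factors combined with the $X^{0,1}$ slot of $Z_{k_{2}}$ or $Z_{k_{3}}$ supplies a factor $2^{-k_{3}}$ that cancels the $2^{k_{2}+k_{3}}$ derivative loss. This is where I expect the main obstacle: the two derivatives on $f,g$ produce a factor $2^{k_{2}+k_{3}}$ that $\|w\|_{Y^{n/2}}$ alone cannot absorb when $k_{4}\ll k_{3}$, and only the $Z^{n/2}=2^{-k}X^{0,1}$ slot of the solution space supplies the missing smoothing. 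Summing all sub-case estimates then yields a geometric series $\sum_{k_{j}}2^{(k_{4}-\max_{i}k_{i})\beta}$ with $\beta>0$, which proves \eqref{4.2} and explains why the hypothesis on $f,g$ is stated in the strictly larger space $F^{n/2}\cap Z^{n/2}$ rather than $F^{n/2}$ alone.
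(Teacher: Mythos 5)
There is a genuine gap, and it sits exactly where you place your confidence: the low-modulation regime. Your key claim is that in the dangerous case the modulation is ``forced to be large on at least one factor,'' so that after splitting $P_{k_1}w$ by $Q$ the high modulation ``must sit on $f$ or on $g$,'' to be absorbed by the $X^{0,1}$ slot of $Z$. For the trilinear expression this is not true in the configuration that is actually hard. First, you have misidentified the hard case: when $k_1\sim k_3\gg k_4$ one has $k_4\le k_1+O(1)$, and the paper disposes of all of $k_4\le k_1+40$ by a single H\"older/Bernstein estimate in the $L^1_tL^2_x$ slot with gain $2^{(k_4-k_1)n/2}$ — no modulation analysis at all. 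The hard case is the opposite one, $k_4\ge k_1+41$ and $k_2\ge k_1+21$ (i.e.\ $w$ carries the \emph{lowest} frequency). There the trilinear resonance is $m_0-m_1-m_2-m_3=2\xi_1\cdot(\xi_2+\xi_3)+2\xi_2\cdot\xi_3$, and when $k_2\ll k_3\sim k_4$ with $\xi_2$ nearly orthogonal to $\xi_3$ this can be $O(2^{k_1+k_4})\ll 2^{k_2+k_3}$: all four modulations (of $w$, $f$, $g$ and the output) can simultaneously be far below the derivative loss $2^{k_2+k_3}$ that your crude H\"older estimates pay. In that region your scheme has nothing left to trade against the two gradients, so the estimate does not close; the smallness is in the \emph{symbol} $\xi_2\cdot\xi_3$, not in any modulation, and only a symbol-aware argument can see it.

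The paper's mechanism, which your proposal never invokes, is the exact identity $-2\nabla\phi\cdot\nabla\psi=\Theta\phi\cdot\psi+\phi\cdot\Theta\psi-\Theta(\phi\cdot\psi)$ with $\Theta=i\partial_t+\Delta$, applied to $\phi=P_{k_2}Q_{\le k_2+k_3+39}f$, $\psi=P_{k_3}g$ after first decomposing the modulation of the \emph{bilinear product} $\sum_i\partial_iP_{k_2}f\,\partial_iP_{k_3}g$ (not of the individual factors) at threshold $2^{k_2+k_3}$. This replaces the symbol $2\xi_2\cdot\xi_3$ exactly by the three modulations of $f$, $g$ and of the product, uniformly in the angle; it is here, in the term $H_{121}$, that $\|P_{k_2}f\|_{Z_{k_2}}$ enters — so your instinct that the $Z^{n/2}$ component is what saves the day is correct, but the route to it is the identity, not a resonance dichotomy. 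Your sketch also omits the $\Theta(\phi\cdot\psi)$ contribution (the paper's $H_{123}$), which is the longest part of the argument: it needs a further output-modulation split at $2^{k_1+k_4}$ into $L_1,L_2$, another split into $L_{11},L_{12}$, and the cap \eqref{4.19}, together with Lemma \ref{Qjk-bdd}. Two smaller defects: decomposing $P_{k_1}w=P_{\le k_2+k_3}(P_{k_1}w)+P_{\ge k_2+k_3+1}(P_{k_1}w)$ is vacuous, since $P_{k_1}w$ is already localized at $|\xi|\sim 2^{k_1}$ (in Lemma \ref{Lemma4.4} the analogous split is meaningful only because $w$ there is not pre-localized); and in your ``transverse regime'' the allocation $w\in L^\infty_tL^2_x$, gradients in $L^{2,\infty}_{\mathbf e}\times L^{\infty,2}_{\mathbf e}$ does not H\"older into $L^{1,2}_{\mathbf e}$ — you need $w$ in $L^{2,\infty}_{\mathbf e}$ (available in $Y_{k_1}$ with weight $2^{(n-1)k_1/2}$), as the paper does in its Case~1, and that changes the exponent bookkeeping.
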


\begin{proof}
By the symmetry of $k_{2}$ and $k_{3}$, we assume that $k_{2}\leq k_{3}$. Notice that when $k_{4}\leq k_{1}+40$, the norm is $L_{t}^{1}L_{x}^{2}$. Utilizing H\"{o}lder's inequality and Bernstein's inequality, one has
\begin{align*}
&\sum_{k_{j}}2^{k_{4}n/2}\left\|P_{k_{4}}\left[P_{k_{1}}w\sum_{i=1}^{n}(\partial_{x_{i}}P_{k_{2}}f \partial_{x_{i}}P_{k_{3}}g)\right]\right\|_{L^{1}_{t}L^{2}_{x}}\\
&\lesssim \sum_{k_{j}}2^{k_{4}n/2}2^{k_{2}+k_{3}}\|P_{k_{1}}wP_{k_{2}}f\|_{L^{2}_{t,x}}\|P_{k_{3}}g\|_{L^{2}_{t}L^{\infty}_{x}}\\
&\lesssim \sum_{k_{j}}2^{k_{4}n/2}2^{k_{2}+k_{3}}\|P_{k_{1}}w\|_{L^{\infty}_{t}L^{2}_{x}}2^{k_{2}(n-2)/2}\|P_{k_{2}}f\|
_{L^{2}_{t}L^{\frac{2n}{n-2}}_{x}}2^{k_{3}(n-2)/2}\|P_{k_{3}}g\|_{L^{2}_{t}L^{\frac{2n}{n-2}}_{x}}\\
&\lesssim \sum_{k_{j}}2^{(k_{4}-k_{1})n/2}\|w\|_{Y^{n/2}}\|f\|_{F^{n/2}}\|g\|_{F^{n/2}}.
\end{align*}

On the other hand, we assume that $k_{4}\geq k_{1}+41$. For a more accurate estimate, we divide it into two cases with the further assumptions of $k_{2}$.

\underline{\bf Case~1.} When $k_{2}\leq k_{1}+20$, the norm is $L^{1,2}_{\mathbf{e}_{i}}$. By employing H\"{o}lder's inequality and Bernstein's inequality again, we obtain
\begin{align*}
&\sum_{k_{j}}2^{k_{4}n/2}2^{-k_{4}/2}\left\|P_{k_{4}}\left[P_{k_{1}}w\sum_{i=1}^{n}(\partial_{x_{i}}P_{k_{2}}f \partial_{x_{i}}P_{k_{3}}g)\right]\right\|_{L^{1,2}_{\mathbf{e}_{i}}}\\
&\lesssim \sum_{k_{j}}\sum_{i=1}^{n}2^{k_{4}(n-1)/2}\|P_{k_{1}}w\partial_{x_{i}}P_{k_{3}}g\|_{L^{2}_{t,x}}
\|\partial_{x_{i}}P_{k_{2}}f\|_{L^{2,\infty}_{\mathbf{e}_{i}}}\\
&\lesssim \sum_{k_{j}}2^{k_{4}(n-1)/2}2^{k_{2}+k_{3}}\|P_{k_{1}}w\|_{L^{2,\infty}_{\mathbf{e}_{i}}}
\|P_{k_{3}}g\|_{L^{\infty,2}_{\mathbf{e}_{i}}}\|P_{k_{2}}f\|_{L^{2,\infty}_{\mathbf{e}_{i}}}\\
&\lesssim \sum_{k_{j}}2^{(k_{4}-k_{3})(n-1)/2+(k_{2}-k_{1})/2}\|w\|_{Y^{n/2}}\|f\|_{F^{n/2}}\|g\|_{F^{n/2}}.
\end{align*}

\underline{\bf Case~2.} When $k_{2}\geq k_{1}+21$, we decompose $P_{k_{1}}fP_{k_{2}}g$ by modulation projectors $Q_{k}$ to get
\begin{align*}
&\sum_{k_{j}}2^{k_{4}n/2}2^{-k_{4}/2}\left\|P_{k_{4}}\left[P_{k_{1}}w\sum_{i=1}^{n}(\partial_{x_{i}}P_{k_{2}}f \partial_{x_{i}}P_{k_{3}}g)\right]\right\|_{N^{*}_{k_{4}}}\\
&\leq \sum_{k_{j}}2^{k_{4}n/2}2^{-k_{4}/2}\left\|P_{k_{4}}\left[P_{k_{1}}w Q_{\leq k_{2}+k_{3}}\sum_{i=1}^{n}(\partial_{x_{i}}P_{k_{2}}f \partial_{x_{i}}P_{k_{3}}g)\right]\right\|_{N^{*}_{k_{4}}}\\
&\quad+\sum_{k_{j}}2^{k_{4}n/2}2^{-k_{4}/2}\left\|P_{k_{4}}\left[P_{k_{1}}wQ_{\geq k_{2}+k_{3}+1}\sum_{i=1}^{n}(\partial_{x_{i}}P_{k_{2}}f \partial_{x_{i}}P_{k_{3}}g)\right]\right\|_{N^{*}_{k_{4}}}\\
&=:H_1+H_2.
\end{align*}

Now we estimate the terms $H_1$ and $H_2$ one by one. For the term $H_2$, decomposing $P_{k_{1}}w$ by modulation projectors, we get
\begin{align*}
H_2&\leq \sum_{k_{j}}2^{k_{4}n/2}2^{-k_{4}/2}\left\|P_{k_{4}}\left[P_{k_{1}}Q_{\geq k_{2}+k_{3}-10}wQ_{\geq k_{2}+k_{3}+1}\sum_{i=1}^{n}(\partial_{x_{i}}P_{k_{2}}f \partial_{x_{i}}P_{k_{3}}g)\right]\right\|_{N^{*}_{k_{4}}}\\
&\quad+\sum_{k_{j}}2^{k_{4}n/2}2^{-k_{4}/2}\left\|P_{k_{4}}\left[P_{k_{1}}Q_{\leq k_{2}+k_{3}-9}wQ_{\geq k_{2}+k_{3}+1}\sum_{i=1}^{n}(\partial_{x_{i}}P_{k_{2}}f \partial_{x_{i}}P_{k_{3}}g)\right]\right\|_{N^{*}_{k_{4}}}\\
&=:H_{21}+H_{22}.
\end{align*}
For the term $H_{21}$, by the fact that the norm on $N^{*}_{k_{4}}$ is $L_{t}^{1}L_{x}^{2}$, H\"{o}lder's inequality, Minkowski's inequality and Bernstein's inequality, one has
\begin{align*}
H_{21}&\lesssim \sum_{k_{j}}2^{k_{4}n/2}\|P_{k_{1}}Q_{\geq k_{2}+k_{3}-10}w\|_{L^{2}_{t}L^{\infty}_{x}}
\|Q_{\geq k_{2}+k_{3}+1}\sum_{i=1}^{n}(\partial_{x_{i}}P_{k_{2}}f \partial_{x_{i}}P_{k_{3}}g)\|_{L^{2}_{t,x}}\\
&\lesssim \sum_{k_{j}}2^{k_{4}n/2}2^{k_{1}n/2}\sum_{j\geq k_{2}+k_{3}-10}\|Q_{j}(P_{k_{1}}w)\|_{L^{2}_{t,x}}
2^{k_{2}+k_{3}}\|P_{k_{2}}fP_{k_{3}}g\|_{L^{2}_{t,x}}\\
&\lesssim \sum_{k_{j}}2^{k_{4}n/2}2^{k_{1}n/2}2^{-(k_{2}+k_{3})}\|P_{k_{1}}w\|_{X^{0,1,2}}2^{k_{2}+k_{3}}
\|P_{k_{2}}f\|_{L^{2,\infty}_{\mathbf{e}_{i}}}\|P_{k_{3}}g\|_{L^{\infty,2}_{\mathbf{e}_{i}}}\\
&\lesssim \sum_{k_{j}}2^{(k_{4}-k_{3})n/2+(k_{1}-k_{2})}\|w\|_{Y^{n/2}}\|f\|_{F^{n/2}}\|g\|_{F^{n/2}}.
\end{align*}
Noted that by the hypothesis that $k_{3}\leq k_{1}+21$ and $k_{4}\leq k_{1}+41$, we get $|k_{4}-k_{3}|\leq 20$, because the continuity of the frequency (the frequency is unbroken).

For the term $H_{22}$, using the fact that the norm is $X^{0,-1/2,1}_{+}$ on $N^{*}_{k_{3}}$ and Lemma \ref{Qjk-bdd}, it holds that
\begin{align*}
H_{22}
%=\sum_{k_{j}}2^{k_{4}n/2}2^{-k_{4}/2}\left\|P_{k_{4}}\left[P_{k_{1}}Q_{\lesssim k_{2}+k_{3}-10}wQ_{\geq k_{2}+k_{3}+1}\sum_{i=1}^{n}(\partial_{x_{i}}P_{k_{2}}f \partial_{x_{i}}P_{k_{3}}g)\right]\right\|_{N^{*}_{k_{4}}}\\
&\lesssim \sum_{k_{j}}2^{k_{4}n/2}\sum_{j\geq k_{2}+k_{3}}2^{-\frac{j}{2}}\|P_{k_{1}}Q_{\leq k_{2}+k_{3}-10}w\|_{L^{\infty}_{t,x}}
\|Q_{\geq k_{2}+k_{3}+1}\sum_{i=1}^{n}(\partial_{x_{i}}P_{k_{2}}f \partial_{x_{i}}P_{k_{3}}g)\|_{L^{2}_{t,x}}\\
&\lesssim \sum_{k_{j}}2^{k_{4}n/2}2^{-(k_{2}+k_{3})/2}2^{k_{1}n/2}\|P_{k_{1}}w\|_{L^{\infty}_{t}L^{2}_{x}}
2^{k_{2}+k_{3}}\|P_{k_{2}}f\|_{L^{2,\infty}_{\mathbf{e}_{i}}}\|P_{k_{3}}g\|_{L^{\infty,2}_{\mathbf{e}_{i}}}\\
&\lesssim \sum_{k_{j}}2^{(k_{4}-k_{3})n/2}\|w\|_{Y^{n/2}}\|f\|_{F^{n/2}}\|g\|_{F^{n/2}},
\end{align*}
where the frequency of $Q_{\leq k_{2}+k_{3}-10}wQ_{\geq k_{2}+k_{3}+1}(P_{k_{2}}f P_{k_{3}}g)$ can be obtained by the support from convolution, i.e., $j\geq k_{2}+k_{3}$. Combining the above estimates, we can easily know that $H_2\leq C\|w\|_{Y^{n/2}}\|f\|_{F^{n/2}}\|g\|_{F^{n/2}}$.

Next, we estimate the term $H_1$. Similarly, we employ the operator $Q_{k}$ for decomposing the frequency, and then we get
\begin{align*}
H_1&\leq \sum_{k_{j}}2^{k_{4}n/2}\left\|P_{k_{4}}\left[P_{k_{1}}w Q_{\leq k_{2}+k_{3}}\sum_{i=1}^{n}(\partial_{x_{i}}P_{k_{2}}
Q_{\geq k_{2}+k_{3}+40}f \partial_{x_{i}}P_{k_{3}}g)\right]\right\|_{N^{*}_{k_{4}}}\\
&\quad+\sum_{k_{j}}2^{k_{4}n/2}\left\|P_{k_{4}}\left[P_{k_{1}}w Q_{\leq k_{2}+k_{3}}\sum_{i=1}^{n}(\partial_{x_{i}}P_{k_{2}}
Q_{\leq k_{2}+k_{3}+39}f \partial_{x_{i}}P_{k_{3}}g)\right]\right\|_{N^{*}_{k_{4}}}\\
&=:H_{11}+H_{12}.
\end{align*}

For the term $H_{11}$, by the fact that the norm on $N^{*}_{k_{3}}$ is $L^{1,2}_{\mathbf{e}_{j}}$, applying the Minkowski's inequality and Bernstein's inequality, one has
\begin{align*}
H_{11}&\lesssim \sum_{k_{j}}2^{k_{4}n/2}2^{-k_{4}/2}\left\|P_{k_{4}}\left[P_{k_{1}}w Q_{\leq k_{2}+k_{3}}\sum_{i=1}^{n}
(\partial_{x_{i}}P_{k_{2}}Q_{\geq k_{2}+k_{3}+40}f \partial_{x_{i}}P_{k_{3}}g)\right]\right\|_{L^{1,2}_{\mathbf{e}_{j}}}\\
&\lesssim \sum_{k_{j}}2^{k_{4}(n-1)/2}\|P_{k_{1}}w\|_{L^{\infty}_{t,x}}2^{k_{2}+k_{3}}\|P_{k_{2}}Q_{\geq k_{2}+k_{3}+40}f\|_{L^{2}_{t,x}}\|P_{k_{3}}g\|_{L^{2,\infty}_{\mathbf{e}_{j}}}\\
&\lesssim \sum_{k_{j}}2^{k_{4}(n-1)/2}2^{k_{1}n/2}\|P_{k_{1}}w\|_{L^{\infty}_{t}L^{2}_{x}}2^{k_{2}+k_{3}}\|P_{k_{2}}f\|_{X^{0,1/2,1}}
2^{-(k_{2}+k_{3})/2}\|P_{k_{3}}g\|_{L^{2,\infty}_{\mathbf{e}_{j}}}\\
&\lesssim \sum_{k_{j}}2^{(k_{4}-k_{2})(n-1)/2}\|w\|_{Y^{n/2}}\|f\|_{F^{n/2}}\|g\|_{F^{n/2}}.
\end{align*}

Noticing that
\begin{align*}
-2\nabla\phi\cdot\nabla\psi=(i\partial_{t}+\Delta)\phi\cdot\psi+\phi\cdot(i\partial_{t}+\Delta)\psi
-(i\partial_{t}+\Delta)(\phi\cdot\psi),
\end{align*}
write $\Theta:=i\partial_{t}+\Delta$ and let
\begin{align*}
\phi=P_{k_{2}}Q_{\leq k_{2}+k_{3}+39}f,\;\; \psi=P_{k_{3}}g.
\end{align*}
Then, the term $H_{12}$ can be easily separated as
\begin{align*}
H_{12}&\leq\sum_{k_{j}}2^{k_{4}n/2}\left\|P_{k_{4}}\left[P_{k_{1}}w Q_{\leq k_{2}+k_{3}}(P_{k_{2}}\Theta Q_{\leq k_{2}+k_{3}+39}f \cdot P_{k_{3}}g)\right]\right\|_{N^{*}_{k_{4}}}\\
&\quad+\sum_{k_{j}}2^{k_{4}n/2}\left\|P_{k_{4}}\left[P_{k_{1}}w Q_{\leq k_{2}+k_{3}}(P_{k_{2}}Q_{\leq k_{2}+k_{3}+39}f \cdot P_{k_{3}}\Theta g)\right]\right\|_{N^{*}_{k_{4}}}\\
&\quad+\sum_{k_{j}}2^{k_{4}n/2}\left\|P_{k_{4}}\left[P_{k_{1}}w Q_{\leq k_{2}+k_{3}}\Theta(P_{k_{2}}Q_{\leq k_{2}+k_{3}+39}f \cdot P_{k_{3}} g)\right]\right\|_{N^{*}_{k_{4}}}\\
&=:H_{121}+H_{122}+H_{123}.
\end{align*}

For the term $H_{121}$, using the fact that the norm on $N^{*}_{k_{3}}$ is $L_{t}^{1}L_{x}^{2}$, Bernstein's inequality and the Plancherel equality  yields that
\begin{align*}
H_{121}&\lesssim \sum_{k_{j}}2^{k_{4}n/2}2^{k_{1}n/2}\|P_{k_{1}}w\|_{L_{t}^{\infty}L_{x}^{2}}\|P_{k_{2}}\Theta Q_{\leq k_{2}+k_{3}+39}f\|_{L_{t,x}^{2}}\|P_{k_{3}}g\|_{L_{t}^{2}L_{x}^{\infty}}.
\end{align*}
By the fact that $P_{k_{2}}\Theta Q_{\leq k_{2}+k_{3}+39}=Q_{\leq k_{2}+k_{3}+39}\Theta P_{k_{2}}$ and Lemma \ref{Qjk-bdd}, we have
\begin{align*}
H_{121}&\lesssim \sum_{k_{j}}2^{k_{4}n/2}2^{k_{1}n/2}\|P_{k_{1}}w\|_{L_{t}^{\infty}L_{x}^{2}}2^{k_{2}}\|P_{k_{2}}f\|_{Z_{k_{2}}}
\|P_{k_{3}}g\|_{L_{t}^{2}L_{x}^{\frac{2n}{n-2}}} \\
&\lesssim \sum_{k_{j}}2^{(k_{4}-k_{3})n/2+(k_{2}-k_{3})}\|w\|_{Y^{n/2}}\|f\|_{Z^{n/2}}\|g\|_{F^{n/2}}.
\end{align*}

For the term $H_{122}$, by the fact that the norm on $N^{*}_{k_{3}}$ is $L^{1,2}_{\mathbf{e}_{j}}$ and applying Minkowski's inequality and Bernstein's inequality, we conclude that
\begin{align*}
H_{122}
%&\lesssim \sum_{k_{j}}2^{k_{4}(n-1)/2}\left\|P_{k_{4}}\left[P_{k_{1}}w Q_{\lesssim k_{2}+k_{3}}(P_{k_{2}}Q_{\lesssim k_{2}+k_{3}+39}f \cdot P_{k_{3}}Q_{\lesssim k_{2}+k_{3}+99}\Theta g)\right]\right\|_{L^{1,2}_{\mathbf{e}_{j}}}\\
&\lesssim \sum_{k_{j}}2^{k_{4}(n-1)/2}\|P_{k_{1}}w\|_{L^{\infty}_{t,x}}\|P_{k_{2}}Q_{\leq k_{2}+k_{3}+39}f\|_{L^{2,\infty}_{\mathbf{e}_{j}}} \|P_{k_{3}}\Theta g\|_{L^{2}_{t,x}} \\
%\lesssim \sum_{k_{j}}2^{k_{4}(n-1)/2}2^{k_{1}n/2}\|P_{k_{1}}w\|_{L^{\infty}_{t}L^{2}_{x}}\|P_{k_{2}}f\|_{L^{2,\infty}_{\mathbf{e}_{j}}}
%2^{k_{2}+k_{3}}\sum_{j\lesssim k_{2}+k_{3}+99}2^{\frac{j}{2}}\|Q_{j}P_{k_{3}}g\|_{L^{2}_{t,x}}2^{-\frac{j}{2}} \\
&\lesssim \sum_{k_{j}}2^{k_{4}(n-1)/2}2^{k_{1}n/2}\|P_{k_{1}}w\|_{L^{\infty}_{t}L^{2}_{x}}\|P_{k_{2}}f\|_{L^{2,\infty}_{\mathbf{e}_{j}}}
2^{k_{2}+k_{3}}\|P_{k_{3}}g\|_{X^{0,\frac{1}{2},\infty}}2^{-\frac{j}{2}} \\
&\lesssim \sum_{k_{j}}2^{(k_{4}-k_{3})(n-2)/2}\|w\|_{Y^{n/2}}\|f\|_{F^{n/2}}\|g\|_{F^{n/2}},
\end{align*}
where we have used convolution properties in the second line, and the fact that the bound of $|\tau+|\xi|^{2}|$ is $ 2^{k_{2}+k_{3}}$ by using the effect of the output modulation operator $Q_{\leq k_{2}+k_{3}}$, i.e.,
\begin{align}\label{4.19}
\begin{split}
Q_{\leq k_{2}+k_{3}}\Theta g
&=\mathscr{F}^{-1}_{\tau,\xi}\left\{\chi_{\leq k_{2}+k_{3}}(\tau+|\xi|^{2})\cdot(-\tau-|\xi|^{2})\mathscr{F}g\right\}\\
&\lesssim 2^{k_{2}+k_{3}}Q_{\leq k_{2}+k_{3}}\cdot g.
\end{split}
\end{align}

For the term $H_{123}$, we divide the modulation operator $Q_{\leq k_{2}+k_{3}}$ into two parts for a more exactly estimate. Hence,
\begin{align*}
H_{123}&\leq \sum_{k_{j}}2^{k_{4}n/2}\left\|P_{k_{4}}\left[P_{k_{1}}w Q_{[k_{1}+k_{4}+100,k_{2}+k_{3}]}\Theta(P_{k_{2}}Q_{\leq k_{2}+k_{3}+39}f \cdot P_{k_{3}} g)\right]\right\|_{N^{*}_{k_{4}}}\\
&\quad+\sum_{k_{j}}2^{k_{4}n/2}\left\|P_{k_{4}}\left[P_{k_{1}}w Q_{\leq k_{1}+k_{4}+99}\Theta(P_{k_{2}}Q_{\leq k_{2}+k_{3}+39}f \cdot P_{k_{3}} g)\right]\right\|_{N^{*}_{k_{4}}}\\
&=:L_{1}+L_{2}.
\end{align*}
By the fact that the norm on $N^{*}_{k_{3}}$ is $L^{1,2}_{\mathbf{e}_{j}}$, using Lemma \ref{Qjk-bdd}, we can bound the term $L_{2}$ by
\begin{align*}
L_{2}
&\lesssim \sum_{k_{j}}2^{k_{4}(n-1)/2}\|P_{k_{1}}w\|_{L^{2,\infty}_{\mathbf{e}_{j}}}
2^{k_{1}+k_{4}}\|P_{k_{2}}Q_{\leq k_{2}+k_{3}+39}f P_{k_{3}} g\|_{L^{2}_{t,x}} \\
&\lesssim \sum_{k_{j}}2^{k_{4}(n-1)/2}\|P_{k_{1}}w\|_{L^{2,\infty}_{\mathbf{e}_{j}}}2^{k_{1}+k_{4}}
\|P_{k_{2}}f\|_{L^{\infty}_{t}L^{2}_{x}}2^{k_{3}(n-2)/2}\|P_{k_{3}}g\|_{L^{2}_{t}L^{\frac{2n}{n-2}}_{x}}\\
&\lesssim \sum_{k_{j}}2^{(k_{4}-k_{3})(n+1)/2+(k_{1}-k_{3})/2}\|w\|_{Y^{n/2}}\|f\|_{F^{n/2}}\|g\|_{F^{n/2}},
\end{align*}
where we have used the fact that $k_{3}\geq k_{1}+21$. For the term $L_{1}$, we use the modulation operator $Q_{k}$ to divide the term $L_{1}$ into two parts. Thus,
\begin{align*}
L_{1}
%=\sum_{k_{j}}2^{k_{4}n/2}\left\|P_{k_{4}}\left[P_{k_{1}}w Q_{[k_{1}+k_{4}+100,k_{2}+k_{3}]}\Theta(P_{k_{2}}Q_{\lesssim k_{2}+k_{3}+39}f \cdot P_{k_{3}} g)\right]\right\|_{N^{*}_{k_{4}}}\\
&\leq \sum_{k_{j}}\sum_{j_{2}=k_{1}+k_{4}+100}^{k_{2}+k_{3}}2^{k_{4}n/2}\left\|P_{k_{4}}Q_{\leq j_{2}-10}\left[P_{k_{1}}w Q_{j_{2}}\Theta(P_{k_{2}}Q_{\leq k_{2}+k_{3}+39}f \cdot P_{k_{3}} g)\right]\right\|_{N^{*}_{k_{4}}}\\
&\quad+\sum_{k_{j}}\sum_{j_{2}=k_{1}+k_{4}+100}^{k_{2}+k_{3}}2^{k_{4}n/2}\left\|P_{k_{4}}Q_{\geq j_{2}-9}\left[P_{k_{1}}w Q_{j_{2}}\Theta(P_{k_{2}}Q_{\leq k_{2}+k_{3}+39}f \cdot P_{k_{3}} g)\right]\right\|_{N^{*}_{k_{4}}}\\
&=:L_{11}+L_{12}.
\end{align*}

Noted that the norm of the term $L_{11}$ on $N^{*}_{k_{3}}$ is $L^{1}_{t}L^{2}_{x}$. Applying
Minkowski's inequality, Bernstein's inequality and employing the method of \eqref{4.19}, we deduce that{\small
\begin{align*}
L_{11}
%&\lesssim \sum_{k_{j}}\sum_{j_{2}=k_{1}+k_{4}+100}^{k_{2}+k_{3}}2^{k_{4}n/2}\left\|P_{k_{4}}Q_{\lesssim j_{2}-10}\left[P_{k_{1}}w Q_{j_{2}}\Theta(P_{k_{2}}Q_{\lesssim k_{2}+k_{3}+39}f \cdot P_{k_{3}} g)\right]\right\|_{L^{1}_{t}L^{2}_{x}}\\
&\lesssim \sum_{k_{j}}\sum_{j_{2}=k_{1}+k_{4}+100}^{k_{2}+k_{3}}2^{k_{4}n/2}\|Q_{j_{3}}P_{k_{1}}w\|_{L^{2}_{t}L^{\infty}_{x}}
\|Q_{j_{2}}\Theta(P_{k_{2}}Q_{\leq k_{2}+k_{3}+39}f \cdot P_{k_{3}} g)\|_{L^{2}_{t,x}} \\
&\lesssim \sum_{k_{j}}\sum_{j_{2}=k_{1}+k_{4}+100}^{k_{2}+k_{3}}2^{k_{4}n/2}2^{k_{1}n/2}\sum_{j_{3}\leq j_{2}-10}2^{j_{3}}\|Q_{j_{3}}P_{k_{1}}w\|_{L^{2}_{t,x}}2^{-j_{3}}2^{j_{2}}\|P_{k_{2}}f\|_{L^{\infty}_{t}L^{2}_{x}}
\|P_{k_{3}} g\|_{L^{2}_{t}L^{\infty}_{x}} \\
&\lesssim \sum_{k_{j}}2^{k_{4}n/2}2^{k_{1}n/2}\left[2^{-(k_{1}+k_{4})}-2^{-(k_{2}+k_{3})}\right]\cdot
\left[2^{(k_{1}+k_{4})}-2^{(k_{2}+k_{3})}\right]2^{k_{3}(n-2)/2}\\
&\quad\|P_{k_{1}}w\|_{X^{0,1,2}}\|P_{k_{2}}f\|_{L^{\infty}_{t}L^{2}_{x}}\|P_{k_{3}} g\|_{L^{2}_{t}L^{\frac{2n}{n-2}}_{x}} \\
&\lesssim \sum_{k_{j}}\left[2^{(k_{4}-k_{2})(n-2)/2}+2^{(k_{4}-k_{2})(n+2)/2+2(k_{1}-k_{3})}\right]
\|w\|_{Y^{n/2}}\|f\|_{F^{n/2}}\|g\|_{F^{n/2}},
\end{align*}}
where the projector $Q_{j_{3}}$ represents the influence (about frequency) of on $P_{k_{1}}w$ by $Q_{\leq j_{2}-10}$.

For the term $L_{12}$, using the fact that the norm on $N^{*}_{k_{3}}$ is $X^{0,-\frac{1}{2},1}_{+}$ and Lemma \ref{Qjk-bdd}, we obtain {\small
\begin{align*}
L_{12}
%&=\sum_{k_{j}}\sum_{j_{2}=k_{1}+k_{4}+100}^{k_{2}+k_{3}}2^{k_{4}n/2}\left\|P_{k_{4}}Q_{\geq j_{2}-9}\left[P_{k_{1}}w Q_{j_{2}}\Theta(P_{k_{2}}Q_{\lesssim k_{2}+k_{3}+39}f \cdot P_{k_{3}} g)\right]\right\|_{N^{*}_{k_{4}}}\\
&\lesssim \sum_{k_{j}}\sum_{j_{2}=k_{1}+k_{4}+100}^{k_{2}+k_{3}}2^{k_{4}n/2}\sum_{j_{3}\geq j_{2}-9}2^{-\frac{j_{3}}{2}}
\|P_{k_{1}}w\|_{L^{\infty}_{t,x}}\|Q_{j_{2}}\Theta(P_{k_{2}}Q_{\leq k_{2}+k_{3}+39}f P_{k_{3}} g)\|_{L^{2}_{t,x}}\\
&\lesssim \sum_{k_{j}}\sum_{j_{2}=k_{1}+k_{4}+100}^{k_{2}+k_{3}}2^{k_{4}n/2}2^{\frac{j_{2}}{2}}2^{k_{1}n/2}
\|P_{k_{1}}w\|_{L^{\infty}_{t}L^{2}_{x}}\|P_{k_{2}}f\|_{L^{\infty}_{t}L^{2}_{x}}
\|P_{k_{3}}g\|_{L^{2}_{t}L^{\infty}_{x}}\\
&\lesssim \sum_{k_{j}}2^{k_{4}n/2}2^{k_{1}n/2}2^{k_{3}(n-2)/2}\left[2^{(k_{1}+k_{4})/2}-2^{(k_{2}+k_{3})/2}\right]
\|P_{k_{1}}w\|_{L^{\infty}_{t}L^{2}_{x}}\|P_{k_{2}}f\|_{L^{\infty}_{t}L^{2}_{x}}\|P_{k_{3}}g\|_{L^{2}_{t}L^{\frac{2n}{n-2}}_{x}}\\
&\lesssim  \sum_{k_{j}}2^{(k_{4}-k_{2})n/2}(1+2^{k_{4}-k_{3}})\|w\|_{Y^{n/2}}\|f\|_{F^{n/2}}\|g\|_{F^{n/2}},
\end{align*}}
\!\!where we have used the fact that $2^{(k_{1}+k_{4})/2}-2^{(k_{2}+k_{3})/2}\leq 2^{k_{4}}-2^{k_{3}}$ in the last line. Hence, we can get $I\leq C \|w\|_{Y^{n/2}}\|f\|_{F^{n/2}}\|g\|_{F^{n/2}}$. Summing up all the estimates, we obtain \eqref{4.2}.
\end{proof}

\begin{proposition}[Noninear estimate]\label{pro nonlinear-es}% 非线性估计结论
Assume that $n\geq3$, $u(t,x)$ is the solution of \eqref{deri-Ginz} satisfying $\|u\|_{F^{n/2}\cap Z^{n/2}}\ll1$, $J(u(t,x))$ is the nonlinear term, for any $\varepsilon>0$,
\begin{align*}
u_{t}-(\varepsilon+ai)\Delta u=J(u(t,x)),\quad u(0,x)=u_{0}.
\end{align*}
Then, we have
\begin{align}
\begin{split}
\|J(u(x,t))\|_{N^{n/2}}
&\leq C\bigg\{\|v\|_{L^{\infty}_{t,x}}\|u\|_{F^{n/2}\cap Z^{n/2}}+\frac{1}{1-\|u\|^{2}_{F^{n/2}\cap Z^{n/2}}}\\
&\quad\left[\|u\|^{3}_{F^{n/2}\cap Z^{n/2}}+\|v\|_{L^{\infty}_{t,x}}\|u\|_{F^{n/2}\cap Z^{n/2}}
+\|v\|_{L^{\infty}_{t,x}}\|u\|^{3}_{F^{n/2}\cap Z^{n/2}}\right] \\
&\quad+\frac{1}{(1-\|u\|^{2}_{F^{n/2}\cap Z^{n/2}})^{2}}\|v\|_{L^{\infty}_{t,x}}
\|u\|^{3}_{F^{n/2}\cap Z^{n/2}}\bigg\},\label{4.17}
\end{split}
\end{align}
where the constant $C>0$ is independent of $\varepsilon$.
\end{proposition}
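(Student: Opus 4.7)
The plan is to decompose $J = J_1 + J_2 + J_3$ according to the splitting given at the start of Section~\ref{Sec4}, and to estimate each piece in $N^{n/2}$ separately. Recall from the definition of $N_k$ that
\[
\|h\|_{N^{n/2}} = \sum_k 2^{kn/2}\bigl(\|P_k h\|_{N^*_k} + 2^{-k}\|P_k h\|_{L^2_{t,x}}\bigr),
\]
so for each $J_\ell$ I must control both the $L^2_{t,x}$ piece (with weight $2^{k(n-2)/2}$) and the piecewise $N^*_k$ piece (with weight $2^{kn/2}$). Lemmas~\ref{Lemma4.3}--\ref{Lemma4.2} have been designed precisely to handle the two basic nonlinear shapes $v\cdot\nabla w\,(P_{k_1}f)(P_{k_2}g)$ and $w\,(\nabla P_{k_1}f)(\nabla P_{k_2}g)$, so the whole argument reduces to massaging $J_1, J_2, J_3$ into sums of such shapes and then invoking these lemmas term-by-term.

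For $J_1 = -(1+i)(v\cdot\nabla)u$, a simplified version of the arguments in Lemmas~\ref{Lemma4.3}--\ref{Lemma4.4} (with the two trailing factors replaced by the constant $1$) produces $\|J_1\|_{N^{n/2}}\lesssim \|v\|_{L^\infty_{t,x}}\|u\|_{F^{n/2}\cap Z^{n/2}}$, which contributes the first term on the right of~\eqref{4.17}. For $J_2 = -2(\varepsilon+i)\bar u(\nabla u)^2/(1+|u|^2)$, the smallness hypothesis $\|u\|_{F^{n/2}\cap Z^{n/2}}\ll 1$ permits the Neumann expansion
\[
\frac{1}{1+|u|^2} = \sum_{m\geq 0}(-1)^m (u\bar u)^m,
\]
turning $J_2$ into a convergent sum of terms of the shape $(u^{m}\bar u^{m+1})(\nabla u)(\nabla u)$. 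Taking $w = u^{m}\bar u^{m+1}$ and $f=g=u$ after Littlewood--Paley decomposition, Lemma~\ref{Lemma4.1} bounds the $L^2_{t,x}$ part and Lemma~\ref{Lemma4.2} bounds the $N^*_k$ part by $\|w\|_{Y^{n/2}}\|u\|^2_{F^{n/2}\cap Z^{n/2}}$. Iterating the algebra-type bound $\|u^a\bar u^b\|_{Y^{n/2}}\lesssim \|u\|^{a+b}_{F^{n/2}\cap Z^{n/2}}$ (which rests on the embedding $F_k\cap Z_k\subset Y_k$ noted in Section~\ref{Sec2} together with repeated application of Lemmas~\ref{Lemma4.1}--\ref{Lemma4.2}) and summing the geometric series in $m$ produces the term $\|u\|^3_{F^{n/2}\cap Z^{n/2}}/(1-\|u\|^2_{F^{n/2}\cap Z^{n/2}})$.

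For $J_3 = -(\operatorname{Im}F - i\operatorname{Re}H)/(1+|u|^2)$, I observe that both $F$ and $H$ already carry $(1+|u|^2)$ in the denominator, so every contribution to $J_3$ has denominator $(1+|u|^2)^2$ multiplied by a rational polynomial in $u,\bar u$ acting on the trilinear core $v\cdot\nabla u\cdot u^a\bar u^b$. Expanding
\[
\frac{1}{(1+|u|^2)^2} = \sum_{m\geq 0}(m+1)(-1)^m(u\bar u)^m,
\]
splits $J_3$ into a convergent double series of terms fitting Lemmas~\ref{Lemma4.3} and~\ref{Lemma4.4} with the role of $w$ played by one of the $u$-factors and $P_{k_1}f, P_{k_2}g$ chosen from the remaining $u,\bar u$ powers. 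The contributions arising from the $(1-|u|^2)$ factor in $F$ separate cleanly into two sub-series: one behaves like a single geometric series, giving the $(1-\|u\|^2)^{-1}$ denominator, while the other inherits the weight $m+1$ and yields the $(1-\|u\|^2)^{-2}$ denominator --- this is exactly the structure on the right of~\eqref{4.17}.

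The main obstacle, as I anticipate it, is bookkeeping rather than a single hard estimate: each $J_\ell$ produces a countable family of multilinear expressions, and for every one of them I must check that the shape matches the hypotheses of the relevant lemma with a correct assignment of $(w,f,g)$, and that the constants in the resulting geometric series are uniform in $m$ and independent of $\varepsilon$. The subtle point is verifying the iterated algebra bound $\|u^a\bar u^b\|_{Y^{n/2}}\lesssim \|u\|^{a+b}_{F^{n/2}\cap Z^{n/2}}$ without losing a factor of $\|u\|$ at each stage of the iteration, because any such loss would destroy the geometric convergence that furnishes the $(1-\|u\|^2)^{-1}$ and $(1-\|u\|^2)^{-2}$ factors in~\eqref{4.17}.
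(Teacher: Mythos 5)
Your overall skeleton coincides with the paper's: split $J=J_1+J_2+J_3$, reduce everything to the two shapes $(v\cdot\nabla w)fg$ and $w\,\nabla f\,\nabla g$, invoke Lemmas \ref{Lemma4.3}--\ref{Lemma4.2}, and absorb the denominators $1+|u|^2$ through a geometric series, which is exactly how the paper arrives at \eqref{4.17}. The genuine gap is your treatment of the linear-in-$u$ term $J_1=-(1+i)(v\cdot\nabla)u$ (the same issue recurs for the piece $(v\cdot\nabla)u$ inside ${\rm Im}F$, cf.\ \eqref{4.15}). You claim that ``a simplified version of Lemmas \ref{Lemma4.3}--\ref{Lemma4.4} with the two trailing factors replaced by the constant $1$'' gives $\|v\cdot\nabla u\|_{N^{n/2}}\lesssim\|v\|_{L^\infty_{t,x}}\|u\|_{F^{n/2}\cap Z^{n/2}}$, but this is not a routine modification: in the proofs of those lemmas the product $P_{k_1}fP_{k_2}g$ is precisely what is placed in $L^2_{t,x}$ (or in paired $L^{2,\infty}_{\mathbf{e}}\cdot L^{\infty,2}_{\mathbf{e}}$ norms) to generate the globally time-integrable quantities that the output norms $L^1_tL^2_x$, $L^{1,2}_{\mathbf{e}}$ and $2^{-k}L^2_{t,x}$ demand. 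With the trailing factors removed, a single factor $P_k u$ controlled only through $F_k\cap Z_k$ (i.e.\ $L^\infty_tL^2_x$, $L^2_tL^{2n/(n-2)}_x$, $X^{0,\frac12,1}$, anisotropic norms, $2^{-k}X^{0,1}$) supplies no global $L^1_t$- or $L^2_{t,x}$-type control, so the ``simplified'' argument does not close. The paper's mechanism is different: it keeps the trilinear structure by writing $v\cdot\nabla u=(v\cdot\nabla u)\,u^{2}/u^{2}$ and applying Lemmas \ref{Lemma4.3}--\ref{Lemma4.4} with $w=f=g=u$ before dividing out $\|u\|^2_{F^{n/2}\cap Z^{n/2}}$ (see \eqref{4.14}); your proposal contains no substitute for this step, and without one the first term on the right of \eqref{4.17} is unjustified.

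A second, smaller divergence: for $J_2,J_3$ you expand $(1+|u|^2)^{-1}$ (and its square) term by term inside the $N^{n/2}$ norm, which requires the product estimate $\|u^{a}\bar u^{b}\|_{Y^{n/2}}\lesssim C^{a+b}\|u\|^{a+b}_{F^{n/2}\cap Z^{n/2}}$ uniformly in $a,b$; you correctly identify this as the crux but leave it unproved, and it is not available in the paper. The paper instead bounds $\|(1+|u|^{2})^{-1}\|_{L^{\infty}_{t,x}}\lesssim (1-\|u\|^{2}_{F^{n/2}\cap Z^{n/2}})^{-1}$ via the chain $F^{n/2}\cap Z^{n/2}\subset Y^{n/2}\subset L^{\infty}_{t,x}$ and pulls this scalar factor out of the $N^{n/2}$ norm, applying the lemmas only to the cores $\bar u(\nabla u)^2$ and $u^2(v\cdot\nabla)u$. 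So for these two terms your route is workable in spirit but rests on an unestablished algebra property that the paper's argument avoids; you should either prove that property or switch to the paper's pull-out-the-$L^\infty$-factor reduction.
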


\begin{proof}
By Taylor's expansion and the fact that $F^{n/2}\cap Z^{n/2}\subset Y^{n/2}\subset L^{\infty}_{t,x}$, we have
\begin{align*}
\left\|\frac{1}{1+|u|^{2}}\right\|_{L^{\infty}_{t,x}}
=\left\|\sum_{k=0}^{n}(-1)^{k}|u|^{2k}\right\|_{L^{\infty}_{t,x}}
\lesssim \left\|\sum_{k=0}^{n}(-1)^{k}|u|^{2k}\right\|_{F^{n/2}\cap Z^{n/2}}
\lesssim \frac{1}{1-\|u\|^{2}_{F^{n/2}\cap Z^{n/2}}}
\end{align*}
Noted that $J(u(x,t))=J_{1}(u(t,x))+J_{2}(u(t,x))+J_{3}(u(t,x))$. For the term $J_{1}(u(t,x))$, combining Lemma \ref{Lemma4.3} and Lemma \ref{Lemma4.4}, we find that
\begin{align}
\begin{split}
\|J_{1}(u(t,x))\|_{N^{n/2}}&=\|-(1+i)(v\cdot\nabla)u\|_{N^{n/2}}
\lesssim \|v\|_{L^{\infty}_{t,x}}\left\|\frac{u^{2}\nabla u}{u^{2}}\right\|_{N^{n/2}} \\
&\lesssim \frac{\|v\|_{L^{\infty}_{t,x}}}{\|u\|^{2}_{F^{n/2}\cap Z^{n/2}}}\|u^{2}\nabla u\|_{N^{n/2}} \\
&\lesssim \|v\|_{L^{\infty}_{t,x}}\|u\|_{F^{n/2}\cap Z^{n/2}},\label{4.14}
\end{split}
\end{align}
For the term $J_{2}(u(t,x))$, by using Lemma \ref{Lemma4.1} and Lemma \ref{Lemma4.2}, one has
\begin{align}
\begin{split}
\|J_{2}(u(t,x))\|_{N^{n/2}}%&=\|\frac{2(\varepsilon+ai)\bar{u}(\nabla u)^{2}}{1+|u|^{2}}\|_{N^{n/2}}\\
&\lesssim \frac{1}{1-\|u\|^{2}_{F^{n/2}\cap Z^{n/2}}}\|\bar{u}(\nabla u)^{2}\|_{N^{n/2}}\\
&\lesssim \frac{1}{1-\|u\|^{2}_{F^{n/2}\cap Z^{n/2}}}\|u\|_{Y^{n/2}}\|u\|^{2}_{F^{n/2}\cap Z^{n/2}}\\
&\lesssim \frac{1}{1-\|u\|^{2}_{F^{n/2}\cap Z^{n/2}}}\|u\|^{3}_{F^{n/2}\cap Z^{n/2}},\label{4.13}
\end{split}
\end{align}
For the term $J_{3}(u(t,x))$, which mainly consists of $F(u,\bar{u})$ and $H(u,\bar{u})$, employing Lemma \ref{Lemma4.3} and Lemma \ref{Lemma4.4}, we obtain
\begin{align}
\begin{split}
\left\|\frac{{\rm{Im}}F}{1+|u|^{2}}\right\|_{N^{n/2}}
&\lesssim\left\|\frac{F}{(1+|u|^{2})^2}\right\|_{N^{n/2}} \\
&\lesssim \left\|\frac{(1+|\bar{u}|^{2})|(1-|u|^{2})(v\cdot\nabla)u|}{(1+|u|^{2})^2}\right\|_{N^{n/2}} \\
&\lesssim \frac{1}{1-\|u\|^{2}_{F^{n/2}\cap Z^{n/2}}}\left\|[(v\cdot\nabla)u+|u|^{2}(v\cdot\nabla)u]\right\|_{N^{n/2}} \\
&\lesssim \frac{1}{1-\|u\|^{2}_{F^{n/2}\cap Z^{n/2}}}\|v\|_{L^{\infty}_{t,x}}
\left(\|u\|_{F^{n/2}\cap Z^{n/2}}+\|u\|^{3}_{F^{n/2}\cap Z^{n/2}}\right),\label{4.15}
\end{split}
\end{align}
and
\begin{align}
\begin{split}
\left\|\frac{{\rm{Re}}H}{1+|u|^{2}}\right\|_{N^{n/2}}
&\lesssim\left\|\frac{H}{(1+|u|^{2})}\right\|_{N^{n/2}} \\
&\lesssim \left\|\frac{4(v\cdot\nabla)u(\bar{u}^{2}+|u|^{2})}{(1+|u|^{2})^2}\right\|_{N^{n/2}} \\
&\lesssim \frac{1}{(1-\|u\|^{2}_{F^{n/2}\cap Z^{n/2}})^{2}}\|v\|_{L^{\infty}_{t,x}}\left\||u|^{2}\nabla u\right\|_{N^{n/2}} \\
&\lesssim \frac{1}{(1-\|u\|^{2}_{F^{n/2}\cap Z^{n/2}})^{2}}\|v\|_{L^{\infty}_{t,x}}\|u\|^{3}_{F^{n/2}\cap Z^{n/2}},\label{4.16}
\end{split}
\end{align}
Combining the estimates \eqref{4.14}--\eqref{4.16}, we get the desired result \eqref{4.17}.
\end{proof}

\section{The existence and uniqueness of strong solutions}\label{Sec5}
In this section, gathering the arguments of Section 3 and Section 4, we prove the well-posedness of the Ginzburg-Landau equation
\eqref{deri-Ginz} by the contraction mapping theorem, and then we prove the main theorem.

\begin{theorem}\label{th exist}
Assume that $n\geq3$, $0<\varepsilon< 1$, the initial data satisfies $\|u_{0}\|_{\dot{B}^{\frac{n}{2}}_{2,1}}\leq \eta$ and $\eta>0$ is small enough. For any $\|v\|_{L^{\infty}_{t,x}}\leq \eta$, then the equation
\eqref{deri-Ginz} has a unique global solution $u(t,x)$ satisfying
\begin{align*}
\|u\|_{F^{n/2}\cap Z^{n/2}}\leq C\eta,
\end{align*}
where the constant $C>0$ is independent of $\varepsilon$.
\end{theorem}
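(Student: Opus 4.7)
The plan is to set up a Picard iteration in the closed ball $B_R := \{u : \|u\|_{F^{n/2}\cap Z^{n/2}} \le R\}$ with $R := 2C_0\eta$, where $C_0$ is the constant from Proposition \ref{pro linear-es}, and invoke Banach's contraction mapping theorem. By Duhamel's principle, define
\[
\Phi(u)(t,x) := e^{(\varepsilon+i)t\Delta}u_0 - \int_0^t e^{(\varepsilon+i)(t-s)\Delta}\,J(u(s))\,ds,
\]
so that a fixed point of $\Phi$ is a strong solution of \eqref{deri-Ginz}.

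First I would verify that $\Phi$ maps $B_R$ into itself. Applying Proposition \ref{pro linear-es} to $\Phi(u)$, together with Proposition \ref{pro nonlinear-es} for $u\in B_R$ (which requires $R\le 1/2$ so the geometric series expansion of $1/(1+|u|^2)$ converges in $F^{n/2}\cap Z^{n/2}\hookrightarrow L^\infty_{t,x}$), gives
\[
\|\Phi(u)\|_{F^{n/2}\cap Z^{n/2}} \le C_0\eta + C_0 C_1\Bigl(\eta R + R^3 + \eta R^3 + \eta R^3\Bigr).
\]
Choosing $\eta$ small enough so that $C_0 C_1(\eta + R^2 + \eta R^2) \le 1/2$ yields $\|\Phi(u)\|_{F^{n/2}\cap Z^{n/2}} \le R$, hence the self-mapping property.

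Second I would prove the contraction estimate. For $u,\tilde u\in B_R$, the difference solves the linear equation with zero initial data and source $J(u)-J(\tilde u)$. By Proposition \ref{pro linear-es},
\[
\|\Phi(u)-\Phi(\tilde u)\|_{F^{n/2}\cap Z^{n/2}} \le C_0\,\|J(u)-J(\tilde u)\|_{N^{n/2}}.
\]
The key is a Lipschitz version of Proposition \ref{pro nonlinear-es}. For the linear term $J_1$ this is immediate; for $J_2$ and $J_3$ I would polarize each trilinear expression and use the identity
\[
\frac{1}{1+|u|^2}-\frac{1}{1+|\tilde u|^2} = \frac{(\bar{\tilde u}-\bar u)u + \tilde u(u-\tilde u)}{(1+|u|^2)(1+|\tilde u|^2)}
\]
to reduce to multilinear expressions already controlled by Lemmas \ref{Lemma4.3}--\ref{Lemma4.2}. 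This yields
\[
\|\Phi(u)-\Phi(\tilde u)\|_{F^{n/2}\cap Z^{n/2}} \le C_2(\eta + R^2)\|u-\tilde u\|_{F^{n/2}\cap Z^{n/2}} \le \tfrac{1}{2}\|u-\tilde u\|_{F^{n/2}\cap Z^{n/2}},
\]
after shrinking $\eta$ once more. Banach's theorem then produces a unique $u\in B_R$ with $\Phi(u)=u$, giving a global strong solution with $\|u\|_{F^{n/2}\cap Z^{n/2}}\le 2C_0\eta$ and, since all constants in Propositions \ref{pro linear-es} and \ref{pro nonlinear-es} are independent of $\varepsilon$, the bound is uniform in $\varepsilon\in(0,1)$.

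The main obstacle is the Lipschitz form of the nonlinear estimate rather than the iteration itself: the rational factors $1/(1+|u|^2)$, $1/(1+|u|^2)^2$ appearing in $J_2,J_3$ must be expanded as geometric series, and the bookkeeping must ensure the prefactors $1/(1-\|u\|^2_{F^{n/2}\cap Z^{n/2}})$ and $1/(1-\|u\|^2_{F^{n/2}\cap Z^{n/2}})^2$ of \eqref{4.17} transfer cleanly to differences; this is only possible because the $L^\infty_{t,x}$ embedding of $F^{n/2}\cap Z^{n/2}$ allows us to trade these rational factors for absolute constants once $R\le 1/2$. Finally, one recovers the result for the LLS equation \eqref{main-eq} by the stereographic inversion, completing Theorem \ref{main-th}.
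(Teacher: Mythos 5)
Your proposal is correct and follows essentially the same route as the paper: Duhamel's formula together with the contraction mapping theorem in $F^{n/2}\cap Z^{n/2}$, with the self-map and Lipschitz bounds supplied by Propositions \ref{pro linear-es} and \ref{pro nonlinear-es} and difference estimates for $J_1,J_2,J_3$ obtained by polarizing the multilinear lemmas. The only cosmetic deviations are the (harmless, norm-irrelevant) sign on the Duhamel integral and that you close the argument directly in the ball of radius $2C_0\eta$, whereas the paper phrases the same smallness step as a bootstrap for $\|u\|_{F^{n/2}\cap Z^{n/2}}\le C_3\|u_0\|_{\dot{B}^{\frac{n}{2}}_{2,1}}$.
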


\begin{proof}
We construct the solution map by Duhamel's principle:
\begin{align*}
\Psi_{u_{0}}(u):=e^{(\varepsilon+i)t\Delta}u_{0}+\int_{0}^{t}e^{(\varepsilon+i)(t-s)\Delta}J(u(s))ds,
\end{align*}
where the initial data $u_{0}$ is given.

First, we prove the map $\Psi_{u_{0}}$ is the map from itself to itself, i.e.,
\begin{align*}
\Psi:~F^{n/2}\cap Z^{n/2}\mapsto F^{n/2}\cap Z^{n/2},
\end{align*}
The result follows easily from Proposition \ref{pro linear-es} and Proposition \ref{pro nonlinear-es}.

Next, we show that the map $\Psi_{u_{0}}$ is contractive. Suppose that $u_{1}$ and $u_{2}$ are two solutions of \eqref{deri-Ginz}, and they correspond to initial data $u_{0}$. By using Proposition \ref{pro linear-es}, we get
\begin{align*}
\|\Psi_{u_{0}}(u_{1})-\Psi_{u_{0}}(u_{2})\|_{F^{n/2}\cap Z^{n/2}}
\lesssim \|J(u_{1})-J(u_{2})\|_{N^{n/2}}.
\end{align*}
Notice that the term $J(u(t,x))$ consists of three parts, we take the term $J_{2}(u(t,x))$ as an example to prove the contraction,
\begin{align*}
\|J_{2}(u_{1})-J_{2}(u_{2})\|_{N^{n/2}}
&\lesssim \left\|\frac{\bar{u}_{1}(\nabla u_{1})^{2}}{1+|u_{1}|^{2}}
-\frac{\bar{u}_{2}(\nabla u_{2})^{2}}{1+|u_{2}|^{2}} \right\|_{N^{n/2}}\\
&\lesssim \sup_{u\in\{{u_{1},u_{2}}\}}\frac{1}{1-\|u\|^{2}_{F^{n/2}\cap Z^{n/2}}}\|(\bar{u}_{1}-\bar{u}_{2})(\nabla u)^{2}\|_{N^{n/2}}\\
&\lesssim \sup_{u\in\{{u_{1},u_{2}}\}}\frac{\|u\|^{2}_{F^{n/2}\cap Z^{n/2}}}{1-\|u\|^{2}_{F^{n/2}\cap Z^{n/2}}}
\|u_{1}-u_{2}\|_{F^{n/2}\cap Z^{n/2}}.
\end{align*}
Similarly, we deal with the terms $J_{1}(u(t,x))$ and $J_{3}(u(t,x))$ as
\begin{align*}
\|J_{1}(u_{1})-J_{1}(u_{2})\|_{N^{n/2}}
&\lesssim  \|v\|_{L^{\infty}_{t,x}}\|u_{1}-u_{2}\|_{F^{n/2}\cap Z^{n/2}},
\end{align*}
and
\begin{align*}
\|J_{3}(u_{1})-J_{3}(u_{2})\|_{N^{n/2}}
&\lesssim\bigg\{\frac{1}{1-\|u\|^{2}_{F^{n/2}\cap Z^{n/2}}}\left[\|v\|_{L^{\infty}_{t,x}}
+\|v\|_{L^{\infty}_{t,x}}\|u\|^{2}_{F^{n/2}\cap Z^{n/2}}\right]\\
&\quad+\frac{1}{(1-\|u\|^{2}_{F^{n/2}\cap Z^{n/2}})^{2}}\|v\|_{L^{\infty}_{t,x}}
\|u\|^{2}_{F^{n/2}\cap Z^{n/2}}\bigg\}\|u_{1}-u_{2}\|_{F^{n/2}\cap Z^{n/2}}.
\end{align*}
Combining the above estimates with Proposition \ref{pro nonlinear-es}, one has
\begin{align}\label{5.2}
\begin{split}
&\|\Psi_{u_{0}}(u_{1})-\Psi_{u_{0}}(u_{2})\|_{F^{n/2}\cap Z^{n/2}}\\
&\leq C_{1}\sum_{i=1}^{3}\|J_{i}(u_{1})-J_{i}(u_{2})\|_{N^{n/2}}\\
&\leq C_{1}\bigg\{\|v\|_{L^{\infty}_{t,x}}+\frac{1}{1-\|u\|^{2}_{F^{n/2}\cap Z^{n/2}}}\left[\|u\|^{2}_{F^{n/2}\cap Z^{n/2}}+\|v\|_{L^{\infty}_{t,x}}
+\|v\|_{L^{\infty}_{t,x}}\|u\|^{2}_{F^{n/2}\cap Z^{n/2}}\right] \\
&\quad+\frac{1}{(1-\|u\|^{2}_{F^{n/2}\cap Z^{n/2}})^{2}}\|v\|_{L^{\infty}_{t,x}}
\|u\|^{2}_{F^{n/2}\cap Z^{n/2}}\bigg\}\|u_{1}-u_{2}\|_{F^{n/2}\cap Z^{n/2}}.
\end{split}
\end{align}
Applying the relationship between $\|u(t,x)\|_{F^{n/2}\cap Z^{n/2}}$ and $\|u_{0}\|_{\dot{B}^{\frac{n}{2}}_{2,1}}$ (see Proposition \ref{pro linear-es}), we get \small{
\begin{align}\label{5.1}
\begin{split}
&\|u(t,x)\|_{F^{\frac{n}{2}}\cap Z^{\frac{n}{2}}}\\
&\leq C_{2}\|u_{0}\|_{\dot{B}^{\frac{n}{2}}_{2,1}}+C_{1}\|J(x,t)\|_{N^{\frac{n}{2}}}\\
&\leq C_{2}\|u_{0}\|_{\dot{B}^{\frac{n}{2}}_{2,1}}+C_{1}\bigg\{\|v\|_{L^{\infty}_{t,x}}+\frac{1}{1-\|u\|^{2}_{F^{n/2}\cap Z^{n/2}}}
\left[\|u\|^{2}_{F^{n/2}\cap Z^{n/2}}+\|v\|_{L^{\infty}_{t,x}}
+\|v\|_{L^{\infty}_{t,x}}\|u\|^{2}_{F^{n/2}\cap Z^{n/2}}\right]\\
&\quad+\frac{1}{(1-\|u\|^{2}_{F^{n/2}\cap Z^{n/2}})^{2}}\|v\|_{L^{\infty}_{t,x}}
\|u\|^{2}_{F^{n/2}\cap Z^{n/2}}\bigg\}\|u\|_{F^{n/2}\cap Z^{n/2}}\\
&=:C_{2}\|u_{0}\|_{\dot{B}^{\frac{n}{2}}_{2,1}}+C_{1}\Gamma\big(\|v\|_{L^{\infty}_{t,x}},\|u\|_{F^{n/2}\cap Z^{n/2}}\big)\|u\|_{F^{n/2}\cap Z^{n/2}}.
\end{split}
\end{align}}
%%%%C_{1}andC_{2}是由线性和非线性估计得到的常数，在这里相当于事先给定不动的。C_{3}是可以任意取的，因此可以满足C_{2}\leq \frac{C_{3}}{4}。
Next, if we suppose that $\|u\|_{F^{n/2}\cap Z^{n/2}}\leq C_{3}\|u_{0}\|_{\dot{B}^{\frac{n}{2}}_{2,1}},~\|v\|_{L^{\infty}_{t,x}}\leq \sigma_{1}$, and $C_{3}$ satisfies $C_{2}\leq \frac{C_{3}}{4}$, we can easily obtain
\begin{align*}
\Gamma\big(\|v\|_{L^{\infty}_{t,x}},\|u\|_{F^{n/2}\cap Z^{n/2}}\big)
\leq \sigma_{1}+\frac{\sigma_{1}+C_{3}^{2}\|u_{0}\|_{\dot{B}^{\frac{n}{2}}_{2,1}}^{2}+\sigma_{1} C_{3}^{2}\|u_{0}\|_{\dot{B}^{\frac{n}{2}}_{2,1}}^{2}}
{1-C_{3}^{2}\|u_{0}\|_{\dot{B}^{\frac{n}{2}}_{2,1}}^{2}}
+\frac{\sigma_{1} C_{3}^{2}\|u_{0}\|_{\dot{B}^{\frac{n}{2}}_{2,1}}^{2}}{\big(1-C_{3}^{2}\|u_{0}\|_{\dot{B}^{\frac{n}{2}}_{2,1}}^{2}\big)^{2}}
=:C_{4},
\end{align*}
 assuming that $\|u_{0}\|_{\dot{B}^{\frac{n}{2}}_{2,1}}\leq \sigma_{2}$ and $\eta:=\min{\{\sigma_{1},\sigma_{2}\}}$ is small enough, yields that $C_{4}$ is small and $C_{1}C_{4}\leq \frac{1}{4}$, then \eqref{5.1} can obtain
\begin{align*}
\|u(t,x)\|_{F^{\frac{n}{2}}\cap Z^{\frac{n}{2}}}
\leq (C_{2}+C_{1}C_{4}C_{3})\|u_{0}\|_{\dot{B}^{\frac{n}{2}}_{2,1}}
\leq \frac{C_{3}}{2}\|u_{0}\|_{\dot{B}^{\frac{n}{2}}_{2,1}}.
\end{align*}
Then applying the bootstrap argument which implies the following relationship indeed hold:
\begin{align*}
\|u\|_{F^{n/2}\cap Z^{n/2}}\leq C_{3}\|u_{0}\|_{\dot{B}^{\frac{n}{2}}_{2,1}}
\end{align*}

Finally gathering the above arguments, when $\|v\|_{L^{\infty}_{t,x}},~\|u_{0}\|_{\dot{B}^{\frac{n}{2}}_{2,1}}\leq \eta$ is small enough, inequality \eqref{5.2} satisfies
\begin{align}
\|\Psi_{u_{0}}(u_{1})-\Psi_{u_{0}}(u_{2})\|_{F^{n/2}\cap Z^{n/2}}
\leq \frac{1}{4}\|u_{1}-u_{2}\|_{F^{n/2}\cap Z^{n/2}}.\label{contractive map}
\end{align}
By the contraction mapping theorem, the global well-posedness can be proved. Hence, we complete the proof of Theorem \ref{th exist}.
\end{proof}

\underline{\bf Proof of Theorem \ref{main-th}}: By the invertibility of the stereographic projection transform, the proof of the well-posedness for the strong solution of equation \eqref{main-eq} is equivalent to the proof of the well-posedness of equation \eqref{deri-Ginz}. Theorem \ref{main-th} follows from Theorem \ref{th exist} by the above arguments. Then, we complete the proof of Theorem \ref{main-th}.

\smallskip

\section*{Acknowledgments}
H. Wang's research is supported by the National Natural Science Foundation of China (Grant No.~11901066), the
Natural Science Foundation of Chongqing (No.~cstc2019jcyj-msxmX0167) and projects No.~2022CDJXY-001,  No.~2020CDJQY-A040 supported by the Fundamental Research Funds for the Central Universities.

\bigskip

\end{document}